\def\documenttitle{Randomization for Markov chains\\
with applications to  networks in a random environment}
\def\documenttitlepdf{Randomization for Markov chains
with applications to  networks in a random environment}
\def\shorttitle{Randomized stochastic networks}
\newcommand{\N}{\mbox{$\mathbb N$}}
\newcommand{{\Z}}{{\bf Z}}
\newtheorem{theorem}{Theorem}
\newtheorem{cor}[theorem]{Corollary}
\newtheorem{defn}[theorem]{Definition}
\newtheorem{prop}[theorem]{Proposition}
\newtheorem{ex}[theorem]{Example}
\numberwithin{theorem}{section}
\numberwithin{equation}{section}
\newcommand{\cF}{{\cal F}}
\newcommand{\X}{{\bf X}}
\newcommand{\Y}{{\bf Y}}
\newcommand{\evect}{\mathbf{e}}
\newcommand{\Jset}{\overline{J}}
\newcommand{\Fset}{\overline{F}}
\newcommand{\myV}{V}
\newcommand{\qsep}{,}
\newcommand{\groutmx}{r}
\newcommand{\routmx}{r}
\newcommand{\envmx}{R}
\newcommand{\tnfactor}{\beta}
\newcommand{\srfactor}{\gamma}
\newcommand{\srfactorvect}{\boldsymbol{\srfactor}}
\newcommand{\avvect}{{\boldsymbol{\alpha}}}
\newcommand{\nvect}{\mathbf{n}}
\newcommand{\JsetB}[1]{B(#1)}
\newcommand{\JsetW}[1]{W(#1)}
\def\queue@width{
	\pgf@x=\wd\pgfnodeparttextbox
	\pgfmathsetlength{\pgf@xa}{\pgfkeysvalueof{/pgf/minimum width}}%
	\ifdim\pgf@x<\pgf@xa
		\pgf@x=\pgf@xa
	\fi
}
\def\queue@height{
	\pgf@x=\ht\pgfnodeparttextbox
	\pgfmathsetlength{\pgf@xa}{\pgfkeysvalueof{/pgf/minimum height}}%
	\ifdim\pgf@x<\pgf@xa
		\pgf@x=\pgf@xa
	\fi
}
	\savedmacro{\queuehead}{\pgfkeysvalueof{/tikz/queue head}}
	\saveddimen{\halfwidth}{
		\queue@width
		\divide \pgf@x by 2
	}
	\saveddimen{\halfheight}{
		\queue@height
		\divide \pgf@x by 2
	}
	\savedmacro{\queuehead}{\pgfkeysvalueof{/tikz/queue head}}
	\saveddimen{\width}{
		\queue@width
	}
	\saveddimen{\height}{
		\queue@height
	}
		\pgfextractx\pgf@xa{\southwest}
		\pgfextracty\pgf@ya{\southwest}
		\pgfextractx\pgf@xb{\northeast}
		\pgfextracty\pgf@yb{\northeast}
		\newdimen \midy
		\newdimen \midx
			\newdimen \cellheight
			\newdimen \currcelly
			\newdimen \cellwidth
			\newdimen \currcellx
\colorlet{down.partially.bg.color}{red!50}
\definecolor{external.fg.color}{RGB}{0,0,0}
\definecolor{external.fg.color}{RGB}{0,0,0}
\newcommand{\customerslinewidth}{1pt}
\begin{document}
\title{\documenttitle}
\author{Ruslan Krenzler
\thanks{ruslan.krenzler@uni-hamburg.de}
\and Hans Daduna%
\thanks{daduna@math.uni-hamburg.de}
\and Sonja Otten%
\thanks{sonja.otten@uni-hamburg.de}
\and 
\small{Fachbereich Mathematik, Universi\"{a}t Hamburg, Bundesstra{\ss}e 55, 20146 Hamburg}
}

\maketitle
\vspace{-1cm}
\begin{abstract}
We\footnote{Preprint-No. 2014-02, Fachbereich Mathematik,
Mathematische Statistik und Stochastische Prozesse}
develop randomized modifications of Markov chains and apply these modifications to the routing  chains of customers in Jacksonian stochastic networks. The aim of our investigations is to find new rerouting schemes for non standard Jackson networks which hitherto resist computing explicitly the stationary distribution.

The non standard properties we can handle by suitable algorithms encompass several modifications of Jackson networks known in the literature, especially breakdown and repair of nodes with access modification for customers to down nodes, finite buffers with control of buffer overflow. The rerouting schemes available in the literature for these situations are special cases of our rerouting schemes, which can deal also with partial degrading of service capacities and even with speed up of service.  

In any case we require our algorithms to react on such general changes in the network with the aim to maintain the utilization of the nodes. To hold this invariant under change of service speeds (intensities) our algorithms not only adapt the routing probabilities but decrease automatically the overall arrival rate to the network if necessary. 
 
Our main application is for stochastic networks in a random environment. The impact of the environment on the network is by changing service speeds (by upgrading and/or degrading, breakdown, repair) and we implement the randomization algorithms to react to the changes of the environment. On the other side, customers departing from the network may enforce the environment to jump immediately. So our environment is not Markov for its own.

The main result is to compute explicitly the joint stationary distribution of the queue lengths vector and the environment which is of product form: Environment and    queue lengths vector, and the queue lengths over the network are decomposable.
\end{abstract}

\textbf{MSC 2000 Subject Classification:} 
60K37, 60K25, 60J10, 90B22, 90B25\\

\textbf{Keywords:} randomized random walks,
Jackson networks, processes in random environment, skipping, reflection,
product form steady-state distribution,
breakdown of nodes, degrading service,
speed-up of service.

\tableofcontents

\section{Introduction}\label{sect:RS-intro}
Queueing networks with product form steady state distribution have found many fields of applications,
e.g. production systems, telecommunications, and computer system modeling.
The success of this class of models and its relatives stems mainly from the simple structure of the
steady state distribution which provides access to easy performance evaluation procedures.

Starting from the work of Jackson \cite{jackson:57} various generalizations have been developed.
A special branch of research which recently has found some interest are product form models for
queueing networks in a random environment with product form steady state distributions.\\

For single service stations (in isolation) there is a long history with investigations on the behaviour of
the stations under external influences, which are subsumed under the term of an environment. Similarly
birth-death processes (as generalizations of classical $M/M/1/\infty$ queues) in a random environment
are well investigated. Most of this work resulted in complex steady state distributions,
 see  e.g., \cite{cogburn;torrez:81}, \cite{cogburn:80}, \cite{yechiali:73},
 \cite{kulkarni;yan:12}, \cite{falin:96}.
A special branch of related research is concerned with service systems under external influences
which cause the service process to break down or decreases availability of servers, see e.g.
\cite{zolotarev:66}, and for recent survey \cite{krishnamoorthy;pramod;chakravarthy:12}.
The results in these articles most often lack the elegance of Jackson's product form steady state
and the simplicity of the birth-death steady states.

Some exceptions are e.g. \cite{schwarz;sauer;daduna;kulik;szekli:06}, \cite{krishnamoorthy;narayanan:13}, and \cite{saffari;asmussen;haji:13}, where the environment of a production model (queue) is an associated inventory, \cite{sauer;daduna:03}, where the influence of the environment on the queue results in randomly occurring breakdowns of the production facility,
and \cite{krenzler;daduna:14}, where the environment encompasses e.g. the node's neighbours, their
status, etc. In these papers on queueing-environment processes
it is shown that the two-dimensional steady state distribution factorizes into the product of
the marginal
one-dimensional steady state distributions, shortly: In steady state and in the long run the steady state distribution for the queue and the environment decouples (for fixed time points).

Related work is by Yamazaki and Miyazawa \cite{yamazaki;miyazawa:95} where the environment of a queue
 is called the ''set of background states'' which determines transition rates of the queue and on the
other side is influenced by state changes of the queue. Yamazaki and Miyazawa prove a decomposition
property (which is in the spirit of decoupling as in Jackson's theorem)  {\bf not} for the queue and the
environment but for the joint {\sc (queueing/environment process)} and some supplementary variables which are
introduced for Markovisation of the process in case of non exponential service times and
non exponential holding times for the background states.

A first approach to find product form steady state distributions for Jackson networks in a random environment
seemingly was the work of Zhu \cite{zhu:94}. Economou \cite{economou:05}, Balsamo and Marin \cite{balsamo;marin:13},
and Tsitsiashvili, Osipova, Koliev, Baum \cite{tsitsiashvili;osipova;koliev;baum:02} continued the investigations
for partly different settings. The general procedure in these papers  for a network of single exponential servers is as follows
(explained in terms of Zhu's notation).

The key stochastic ingredients in the {\bf classical Jackson network} for node $i$ of the network
are an external Poisson-$\lambda_i$ arrival stream,
exponential-$\mu_i$ service times, and a Markovian routing scheme, which produces an overall arrival rate $\eta_i$
and a local marginal stationary distribution $\pi_i$ with
\begin{equation}\label{eq:RS-pi-i}
\pi_i(n) = \left(1-\frac{\eta_i}{\mu_i}\right) \left(\frac{\eta_i}{\mu_i}\right)^n,\quad n\in\N_0\,.
\end{equation}
In \cite{zhu:94} these fundamental parameters depend on the state of the external environment. If the
environment is in state $k$ the parameters are $\lambda_i(k), \mu_i(k), \eta_i(k)$, and with the
additional assumption that the utilizations $\rho_i(k):={\eta_i(k)}/{\mu_i(k)}$ do not depend on $k$,
i.e.
\begin{equation}\label{eq:RS-rho-invariant}
 \rho_i(k) = {\eta_i(k)}/{\mu_i(k)}={\eta_i}/{\mu_i} =:\rho_i\,,
\end{equation}
say, the local marginal stationary distribution $\pi_i$ is
\eqref{eq:RS-pi-i} again. Zhu and his followers do not explain how this independence of
$k$ for the utilizations emerges,
i.e. no (physical) control mechanism is described which holds ${\eta_i(k)}/{\mu_i(k)}$ invariant during changes of the environment.

 Tsitsiashvili, Osipova, Koliev, Baum \cite{tsitsiashvili;osipova;koliev;baum:02}  argue
for the invariance ${\eta_i(k)}/{\mu_i(k)}={\eta_i}/{\mu_i}$  by pointing on the fact that there exist
natural control mechanisms of technical and biological systems to react on changes of the environment, but no explicit rule to maintain the utilization is given.

The quest for such rules has on the other side a long history in related fields, especially in the
control of communications networks, where the term ''rerouting schemes'' describes the necessity to react, e.g.
to buffer overflow, broken down nodes, (partial) degrading  of transmission lines. Examples are described in
\cite{sauer;daduna:03} under the heading of ''skipping'', ''repeated service--random destination'', ''stalling''.
The first regime (skipping) is called ''jump over protocol'' by other authors, see \cite{dijk:88}.

 To put it into a more concrete example: These schemes try to mimic by stylized policies in communications networks an exchange of routing tables
or the rules for dynamic traffic allocation to paths as reaction to changes of the environment or the network's load situation,
for a discussion see \cite{nucci;schroeder;bhattacharyya;taft;diot:03}
 or \cite{gibbens;kelly;key:95}.\\

Our present investigation originates from the observation often made in stochastic networks
with blocking or with
unreliable servers, that it is possible to obtain explicit product form stationary distributions by
implementing a clever rerouting regime for customers who find at
a node, selected for his next entrance, the buffer full or the node broken down,
see e.g. \cite{sauer;daduna:03} and the literature cited there. 
A bulk of examples can be found throughout the book \cite{dijk:93} and 
in Chapters 1 and 9 of \cite{boucherie;dijk:10}.
These rerouting schemes maintain the utilization as in \eqref{eq:RS-rho-invariant} for those nodes which are not blocked,
resp. not broken down. We remark that these results are not covered by the model and results of Zhu,
where the $\eta_i(k)$ are required to be strictly positive for all nodes $i$ and
for all environment states $k$.

Similar product form network models for randomized medium access control protocols have found 
interest recently because of tractability of the model, see \cite{shah;shin:12}.\\

{\bf (I)} Our first contribution is that we complement the results of Zhu by providing physically
meaningful rerouting schemes which maintain in his setting the utilizations \eqref{eq:RS-rho-invariant}.

{\bf (II)} Our second contribution is to extend Zhu's results in a way that the mentioned results on networks with breakdowns in \cite{sauer;daduna:03} are covered and generalized.

We start as Zhu \cite{zhu:94} with a Jackson network with locally queue length $(n_i)$ dependent service intensities, which may depend
additionally on the actual state $k$ of the network's environment  $(\mu_i(n_i,k))$. The external arrival  rates are
$\lambda_i(k)$, and the overall arrival rates are $\eta_i(k)$. We will construct explicit control schemes which adapt the
routing  to the changes in the parameters,  when the environment changes from $k$ to $m$ and the
service intensities from $\mu_i(n_i,k)$ to $\mu_i(n_i,m)$. We will \underline{prove} that under these new control schemes the
respective ratios are maintained constant: ${\eta_i(k)}/{\mu_i(n_i,k)}={\eta_i}/{\mu_i(n_i)}$
will be  independent of $k$ (but not of $n_i$) as long as $\eta_i(k)>0$ holds.
Our theorems will cover the case $\eta_i(k)=0$ as well, which is in force e.g., if an unreliable node
$i$ is down and  therefore does not accept new customers.

The most important consequence will be that for Jackson networks in a random environment we obtain a product form stationary distribution, similar to \cite{zhu:94}[Theorem 1],
but under much more general assumptions. That is, the joint network-environment process
decouples and additionally the joint
queue length distribution of the network processes decouples as well.\

{\bf (III)} Our third contribution is an extension of the environment structures found in the mentioned previous literature because in our setting the network process influences the environment
process as well. We emphasize that different from the mentioned work in
\cite{zhu:94}, \cite{economou:05},  \cite{balsamo;marin:13}, and Tsitsiashvili, Osipova, Koliev, Baum \cite{tsitsiashvili;osipova;koliev;baum:02}, our environment process is not Markov for its own because changes in the
queue length processes may enforce the environment to immediate changes as well. There will be a two-way interaction between the service systems and the environment in our model.

We point out that the modifications of the random walks which we construct encompass  the rerouting schemes which are often found in the
literature and are called e.g. ''jump over protocol'' or ''skipping'' or
''blocking after service and retrial''. This implies that our results generalize those in
 \cite{zhu:94}, \cite{economou:05},  \cite{balsamo;marin:13}, and \cite{tsitsiashvili;osipova;koliev;baum:02}, and as well some of \cite{sauer;daduna:03}, which are not covered by the results of the previous papers.

{\bf (IV)} We start our presentation in Section \ref{sect:RS-randomwalks} with a detailed study of the routing chains for the selection of individual customers' itineraries
in the network and suitable modifications of these chains in terms of general Markov chains,
resp. random walks. The modification is realized
in analogy to principles occurring in MCMC algorithms by
attaching to any state of the chain a (state dependent)  ''acceptance probability'' to operate
via Bernoulli experiments on the original transition
matrix of the chain, which is considered as ''candidate-generating matrix'' (see \cite{bremaud:99}[Section 7.1]).

The jump is realized, if accepted, and the chain settles down at the selected state for the next time slot.
But other than in MCMC algorithms, ''not accepted'' in our modification means not necessarily for the
random walk to stay on at the departure state. We consider additionally to the standard one the
policy  that
from the selected, but not accepted state the chain tries again to find a next state, now with probabilities from the ''candidate-generating matrix''
determined by the row of the not accepted state. Thereafter acceptance is tested again,
and so on.

We will show that the steady state distributions of the original chain and the modified chains are intimately connected and that
we can express the new steady state easily in terms of the steady state of the old chain and the acceptance probabilities.

{\bf (V)} Although our research started with a quest for  new rerouting schemes for Jackson networks
in case of (partial) non-availability of servers, the developed schemes seem to be of interest for their own.

From an abstract point of view our modification of the original chain can be considered as a complicated change of measure
for the process distribution, see \cite{asmussen;glynn:07}[Chapter V, 1c, Example 8], which  results
in a surprisingly simple explicit change of measure for the stationary distribution.

The modification algorithms which lead to this change of measure can be distinguished according 
to the property to be local or global (as discussed in \cite{shah;shin:12}) with respect to the one-step transition graph of the  original chain.

In Section \ref{sect:RS-connection-simulation} we discuss the connections of our randomization
algorithm to MCMC algorithm and to von Neumann's acceptance-rejection scheme for sampling from a
complicated distribution, and furthermore compare the different modifications of the
random walk with respect to Peskun ordering, and the consequences thereof.\\

{\bf (VI)} The applications of the randomized random walk algorithms to stochastic networks 
in the present paper is in the spirit
of ''Performability'' as  stated in \cite{meyer:84}[p. 648]: ''If computing system performance is degradable, then \dots system evaluation must deal simultaneously with aspects of both
performance and reliability.'' A more recent compendium on these topics is \cite{haverkort;marie;rubino;trivedi:01}.
We start with a standard Jackson network and consider the situation where the service capacities
of nodes are degraded and by some network control the utilization of the nodes should be maintained.
(This is the situation of Zhu et al. in the mentioned papers as long as no node is completely broken down.)

The network controller's policy is in Section \ref{sect:RS-degraded} to reject a portion of the
load offered to the network and to redistribute the admitted load. This is organized by applying
our modification algorithms for random walks to the routing chains for the customers.
Additionally to partial degrading we can handle with our algorithm complete breakdown of nodes, and
in Section \ref{sect:RS-upgraded} we allow even to speed up service at some nodes, while others are degraded or broken down.
In any case the utilization of the nodes which are not completely down is maintained by the control policy.

While these sections are concerned with transforming a Jackson network into another one where
service capacities are degraded and/or upgraded and routing is adapted,  maintaining the nodes' utilization and therefore the
joint product form stationary distribution, we utilize the obtained transformation rules
in Section \ref{sect:RS-JN-random-env} to adapt routing by different algorithms to the impact
of a dynamically changing environment. The environment's changes cause the nodes' service
capacities to degrade and/or upgrade, even complete breakdown with following
repair or partial repair can be handled.

The most surprising result is an iterated  product form stationary distribution of the
system process, which is a multidimensional Markov process recording jointly the environment's
status and the joint queue lengths vector for the network process.
The product form which is obtained says that
\begin{enumerate}
  \item the queue length vector and the environment status, and
  \item inside of the joint queue lengths the local queue lengths
\end{enumerate}
asymptotically and in equilibrium decouple
(are decomposable in the sense of \cite{yamazaki;miyazawa:95}). This result is even more remarkable as we do not require
that the environment process is a Markov process of its own, as it is necessary for the results
proved in  \cite{zhu:94}, \cite{economou:05},  \cite{balsamo;marin:13}, and \cite{tsitsiashvili;osipova;koliev;baum:02}.
We can handle a two-way interaction between environment which enforces changes of the nodes'
service capacity when it changes, and the queueing network process, which triggers
immediate jumps of the environment, when a customer departs from the network.
Such (more complicated) two-way interaction is investigated in \cite{yamazaki;miyazawa:95}
as well, but due to the more complicated structure they loose in their results  
decoupling of the (single) queue length  and the environment status.\\

\noindent
{\bf Notation and conventions (noch zu ueberarbeiten):}
\begin{itemize}

\item $\mathbb{R}_{0}^{+}=[0,\infty)$
 , $\mathbb{R}^{+}=(0,\infty)$
 , $\mathbb{N}={1,2,3,\dots}$, $\mathbb{N}_{0}=\{0\}\cup\mathbb{N}$

\item
For sets $A,B$ we write  $A \subset B$ for $A$ which is a  subset of $B$ or equals $B$, and
we write $A \subsetneq B$ for $A$ which  is a  subset of $B$ but does not equal $B$.

\item
Throughout, the node set of our graphs (networks) are denoted by $\Jset :=\{1,\dots,J\}$,
and the ''extended node set'' is $\Jset_0 := \{0,1,\dots,J\}$, where ''$0$'' refers to the
external source and sink of the network.

\item
$\evect_j$ is the standard j-th base vector in  $\N^{\Jset}_0$ if $1\leq j\leq J$.

\item
$\nvect = (n_j:j\in \Jset)$.

\item
For any finite index set $\Fset=\{0,1,\dots,F\}$ and any $\avvect=(\alpha_j: j\in \Fset)$ we define a matrix $I_{\avvect}$ which is the diagonal matrix indexed by $\Fset$
with $\alpha_{i}$ on its diagonal, i.e.
\[
I_{\avvect}:=\left(\begin{array}{cccc}
\alpha_{0}\\
 & \alpha_{1}\\
 &  & \ddots\\
 &  &  & \alpha_{F}
\end{array}\right)\,,
\]
and similarly we define $I_{(1-\avvect)}$ indexed by $\Fset$ as
\[
I_{(1-\avvect)}:=\left(\begin{array}{cccc}
1-\alpha_{0}\\
 & 1-\alpha_{1}\\
 &  & \ddots\\
 &  &  & 1-\alpha_{F}
\end{array}\right)\,.
\]
.
\item
Here and elsewhere we agree that empty entries in a matrix are read to be zero.

\item For $\mathbf{x}=(x_j: j\in \Fset)$ we define
$||\mathbf{x}||_{\infty}:=\sup_{j \in \Fset}{|x_j|}$.

\item $1_{[expression]}$ is the indicator function which is $1$ if $expression$
is true and $0$ otherwise.

\item All random variables occurring in the sequel are defined on a common underlying probability space
$(\Omega, \cF, P)$.
\end{itemize}

\section{Randomized random walks}\label{sect:RS-randomwalks}
Let $X=(X_{n}:n\in\mathbb{N}_{0})$ be a homogeneous irreducible Markov chain on a
finite state space $\Fset$ with one-step transition probability matrix $\groutmx =(\groutmx (i,j):i,j\in \Fset)$ and
(unique) steady state distribution $\eta=(\eta_i:i\in \Fset)$.
This chain represents in our network applications the homogeneous Markov chain that describes
the random walk (routing) of the customers on the nodes of the network.
In this application scenario the routing will be modified by a network controller as a reaction to
changes of the network's parameter due to the impact of the environment.

The general principle is: The transition matrix $r$ will
be used as a ''candidate generating matrix'' for the next state of the random walk. The candidate state
will be accepted with state dependent probabilities and we develop different policies to continue when
the proposed state is rejected.

\subsection{Randomized skipping}
\label{sect:RS-randomskip}
The following modification of $X$ with prescribed $B \subsetneq \Fset$ is  known under the terms of~
''Markov chains with taboo set $B$'',  or ''jump-over-protocol for $B$'',  or ''skipping $B$''.
In the realm of queueing network theory this principle for modifying a Markov chain
seems to be  introduced independently  several times and was used  to
resolve blocking, see e.g. \cite{dijk:88}, \cite{economou;fakinos:98} and \cite{serfozo:99}[Chapter 3.6],
(where it is called blocking and rerouting)
and the references therein. As a general methodology  skipping
 was already introduced by Schassberger \cite{schassberger:84a} and  later on, based on
Schassberger's result, it was used in \cite{daduna;szekli:96} to construct general
abstract network processes.

An intuitive description of the skipping principle can be given in terms of a random walk on
$\Fset$ governed by $\groutmx$:
If the random walker's (RW) path governed by the Markov chain  is restricted by a taboo set
$B\subsetneq \Fset$, RW applies\\
{\bf Skipping $B$}:
If RW is in state $i\in \Fset$ and selects (with probability $\groutmx(i,j)$) its destination
$j\in \Fset \setminus B$, this jump is allowed and immediately performed.
If (with probability $\groutmx(i,k)$) he decides to jump to state $k\in B$, he only performs an imaginary jump to $k$, spending no time there, but jumping on immediately governed by the
matrix $\groutmx,$ i.e. with probability $\groutmx(k,l)$ he selects another possible successor state $l$; if
$l\in \Fset \setminus B$, then the jump is performed immediately, but if $l\in B$, RW has to perform another random choice as  if he would depart from $l$; and so on.\\

Our extended modification scheme for $X$ and $\groutmx$ is a randomized generalization of that skipping
scheme. 
For the states in $\Fset$ are given  ''acceptance probabilities'' by some vector
$\avvect=(\alpha_j\in [0,1]: j\in \Fset)$. The random walker (RW)  selects his itinerary under $r$ and the constraints $\avvect$ by \\
{\bf Randomized skipping with acceptance probabilities} $\avvect$:
If RW is in state $i\in \Fset$ and selects (with probability $\groutmx(i,j)$) its destination
$j\in \Fset$, a Bernoulli experiment is performed with success (acceptance)
probability $\alpha_j$,
independent of the past, given $j$. If the experiment is successful ($=1$),
this jump is accepted, immediately performed, and RW settles down at $j$ for at least one time slot.
If the experiment is not successful ($=0$), this jump is not accepted and
RW only performs an imaginary jump to $j$, spending no time there, but jumping on immediately governed by the
matrix $\groutmx,$ i.e. with probability $\groutmx(j,l)$ he selects another possible successor state $l$;
thereafter a Bernoulli experiment is performed with success (acceptance)
probability $\alpha_l$,
independent of the past, given $l$. If the experiment is successful ($=1$),
this jump is accepted, immediately performed, and RW settles down at $l$ for
at least one time slot.
If the experiment is not successful ($=0$), this jump is not accepted and
RW only performs an imaginary jump to $l$, spending no time there, but jumps on immediately according to the  matrix $r$; and so on.

\begin{ex}\label{ex:RS-skipping-deterministic}
If for $B \subsetneq \Fset$
\begin{equation*} 
\alpha_j =
\begin{cases}
0 & \text{if}~~~j\in B,\\
1 & \text{if}~~~j\in \Fset \setminus B\,,
\end{cases}
\end{equation*}
then we have exactly the classical skipping over taboo set $B$ as described above, because a jump to
$j\in B$ is never accepted, but whenever a jump to $j\in \Fset \setminus B$ is proposed, this will be accepted with probability $1$.

If we have some general $\avvect$ then the set $B(\avvect)=\{j\in \Fset: \alpha_j =0\}$
is a taboo set for the ''randomized skipping process''.
\end{ex}
\subsubsection{Transition matrix}\label{sect:RS-skippingmatrix}
It is easy to see that this construction of a modified chain by randomized skipping
generates a new homogeneous Markov chain, the
transition matrix of which will be denoted by $\groutmx^{(\avvect)}$. For simplicity of presentation we will denote a Markov chain under this regime by $X^{(\avvect)}$.

To determine the transition probabilities
$\groutmx^{(\avvect)}(i,j)$ we construct an auxiliary absorbing Markov chain $(X^{(A)},Y^{(A)})$ with state space $\Fset\times\{0,1\}$ such that
\begin{itemize}
 \item  $X^{(A)}$ records the itinerary of RW during his (possibly many) imaginary jumps until its
candidate state  is accepted - if this happened RW settles down there forever, because the chain
 $(X^{(A)},Y^{(A)})$ is absorbed,
 \item  $Y^{(A)}$ indicates whether a candidate state is accepted or not,
 \item  the states in $\Fset\times\{1\}$ are absorbing,
 \item  initial states for $(X^{(A)},Y^{(A)})$ are restricted to $\Fset\times\{0\}$, and therefore
$Y^{(A)}$  stays at $0$ until absorption of $(X^{(A)},Y^{(A)})$ in  $\Fset\times\{1\}$.
\end{itemize}
The transition probabilities for  $(X^{(A)},Y^{(A)})$ are for $i,j\in \Fset$ as long as at time $n$ the candidate state is not accepted
\begin{eqnarray}
P(X^{(A)}_{n+1}=j,Y^{(A)}_{n+1}=1|X^{(A)}_{n}=i,Y^{(A)}_{n}=0) & = & \groutmx(i,j) \alpha_{j}
      =(\groutmx\cdot I_\avvect)_{ij},\label{eq:RS-Amatrix1}\\
P(X^{(A)}_{n+1}=j,Y^{(A)}_{n+1}=0|X^{(A)}_{n}=i,Y^{(A)}_{n}=0) & = & \groutmx(i,j) (1-\alpha_{j})
      =(\groutmx\cdot I_{(1 - \avvect)})_{ij},\label{eq:RS-Amatrix2}
\end{eqnarray}
and thereafter
\begin{eqnarray}
P(X^{(A)}_{n+1}=j,Y^{(A)}_{n+1}=1|X^{(A)}_{n}=i,Y^{(A)}_{n}=1) & = & \delta_{ij},\label{eq:RS-Amatrix3}\\
P(X^{(A)}_{n+1}=j,Y^{(A)}_{n+1}=0|X^{(A)}_{n}=i,Y^{(A)}_{n}=1) & = & 0\,.\label{eq:RS-Amatrix4}
\end{eqnarray}

We denote the first entrance time into  $\Fset\times\{1\}$ of $(X^{(A)},Y^{(A)})$ by
\begin{equation}\label{eq:RS-tauA}
\tau^{(A)} := \inf \{n\geq 1 |(X^{(A)}_n,Y^{(A)}_n)\in  F\times\{1\}\}
= \inf \{n\geq 1 |Y^{(A)}_n = 1\}\,,
\end{equation}
which is $P$-a.s. finite.
\begin{theorem}
\label{thm:RS-r-alpha}
For the Markov chain $X=(X_n:n\in \N_0)$ with transition matrix $\groutmx=(\groutmx(i,j):i,j\in \Fset)$
and a non zero vector  $\avvect=(\alpha_j:j\in \Fset)$ of acceptance probabilities
denote by  $X^{(\avvect)}$ the Markov chain modification of $X$ under randomized skipping and by $\groutmx^{(\avvect)}$ the transition matrix of $X^{(\avvect)}$, and
by $B(\avvect)=\{j\in \Fset: \alpha_j =0\}\subsetneq \Fset$  the taboo set for  $X^{(\avvect)}$.\\
Then from the auxiliary chain  $(X^{(A)},Y^{(A)})$ we obtain
\begin{equation}\label{eq:RS-r-alpha1}
\groutmx^{(\avvect)}(i,j) = P\left(X^{(A)}(\tau^{(A)})=j |(X^{(A)}_0,Y^{(A)}_0)=(i,0)\right),
\quad i,j\in \Fset\,.
\end{equation}
The Markov chain $X^{(\avvect)}$ with state space $\Fset$ and transition matrix $\groutmx^{(\avvect)}$ is irreducible on\\ $\Fset\setminus B(\avvect)$, and it holds
\begin{equation}\label{eq:RS-r-alpha2}
\groutmx^{(\avvect)}=\sum_{k=0}^{\infty}\left(\groutmx I_{(1-\avvect)}\right)^{k}\groutmx I_{\avvect}
=\left(I - \groutmx I_{(1-\avvect)}\right)^{-1} \groutmx I_{\avvect}\,.
\end{equation}
\end{theorem}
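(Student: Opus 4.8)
The plan is to funnel everything through the auxiliary absorbing chain $(X^{(A)},Y^{(A)})$ and its absorption time $\tau^{(A)}$ from \eqref{eq:RS-tauA}, so that the matrix identities fall out of first-passage computations. First I would observe that one step of $X^{(\avvect)}$ started in $i\in\Fset$ is, by construction, a finite string of \emph{rejected} imaginary jumps governed by $\groutmx$ carrying per-state survival weights $1-\alpha_\cdot$, terminated by exactly one \emph{accepted} jump into some $j$ with weight $\alpha_j$; reading \eqref{eq:RS-Amatrix1}--\eqref{eq:RS-Amatrix4}, this is precisely the trajectory of $(X^{(A)},Y^{(A)})$ issued from $(i,0)$, run until absorption in $\Fset\times\{1\}$, with the absorbing position recording $j=X^{(A)}(\tau^{(A)})$. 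Since $X^{(\avvect)}$ is already known to be a homogeneous Markov chain, $(X^{(A)},Y^{(A)})$ is time-homogeneous, and $\tau^{(A)}<\infty$ $P$-a.s., the one-step kernel of $X^{(\avvect)}$ equals this absorption law, which is \eqref{eq:RS-r-alpha1}; summing \eqref{eq:RS-r-alpha1} over $j$ also shows that $\groutmx^{(\avvect)}$ is stochastic.

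Next I would decompose the absorption event according to the value $m=\tau^{(A)}$. Because $Y^{(A)}$ cannot return to $0$ once it has reached $1$, the event $\{\tau^{(A)}=m,\ X^{(A)}_m=j\}$ issued from $(i,0)$ consists of $m-1$ transitions of the type \eqref{eq:RS-Amatrix2} followed by one transition of the type \eqref{eq:RS-Amatrix1}, hence has probability $\bigl((\groutmx I_{(1-\avvect)})^{m-1}\groutmx I_{\avvect}\bigr)_{ij}$. Summing over $m\geq 1$ and invoking \eqref{eq:RS-r-alpha1} yields the first equality in \eqref{eq:RS-r-alpha2}; equivalently, conditioning on whether the first proposed state is accepted gives the renewal identity $\groutmx^{(\avvect)}=\groutmx I_{\avvect}+\groutmx I_{(1-\avvect)}\groutmx^{(\avvect)}$, from which the same series emerges. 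To reach the closed form, set $M:=\groutmx I_{(1-\avvect)}\geq 0$ and note that $\sum_{j\in\Fset}(M^{k})_{ij}=P(\tau^{(A)}>k\mid(X^{(A)}_0,Y^{(A)}_0)=(i,0))$; since $\Fset$ is finite and $\tau^{(A)}$ is a.s.\ finite, the expected absorption time is finite, so $\sum_{k\geq 0}M^{k}$ converges entrywise. Consequently the spectral radius of $M$ is $<1$, $I-M$ is invertible with $(I-M)^{-1}=\sum_{k\geq 0}M^{k}$, and right-multiplying by $\groutmx I_{\avvect}$ gives the second equality in \eqref{eq:RS-r-alpha2}.

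For the irreducibility claim, I would first note that the $j$-th column of $\groutmx I_{\avvect}$ vanishes whenever $\alpha_j=0$, so by \eqref{eq:RS-r-alpha2} $\groutmx^{(\avvect)}(i,j)=0$ for every $j\in B(\avvect)$; thus $X^{(\avvect)}$ never enters $B(\avvect)$, and $\Fset\setminus B(\avvect)\neq\emptyset$ because $\avvect\neq 0$. Given $i,j\in\Fset\setminus B(\avvect)$, irreducibility of $\groutmx$ provides a path $i=v_0,v_1,\dots,v_n=j$ with all $\groutmx(v_{s-1},v_s)>0$; let $i=v_{a_0},v_{a_1},\dots,v_{a_p}=j$ be the subsequence of those $v_s$ lying outside $B(\avvect)$. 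For consecutive indices $a_q<a_{q+1}$ the intermediate states $v_{a_q+1},\dots,v_{a_{q+1}-1}$ all lie in $B(\avvect)$, so the skipping realization ``propose $v_{a_q+1}$, reject with probability $1$, \dots, propose $v_{a_{q+1}}$, accept with probability $\alpha_{v_{a_{q+1}}}>0$'' has strictly positive probability, i.e.\ $\groutmx^{(\avvect)}(v_{a_q},v_{a_{q+1}})>0$. Chaining over $q$ shows that $X^{(\avvect)}$ reaches $j$ from $i$ with positive probability, which is irreducibility on $\Fset\setminus B(\avvect)$.

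The steps that will need the most care are the first one --- making rigorous that the informally described settling procedure really is a time-homogeneous Markov chain whose one-step kernel coincides with the absorption distribution of $(X^{(A)},Y^{(A)})$ --- and the a.s.\ finiteness of $\tau^{(A)}$ that underpins the Neumann-series argument, since this is exactly where the hypotheses $\avvect\neq 0$ and irreducibility of $\groutmx$ are consumed (from any state, $\groutmx$ reaches in a bounded number of steps some state with positive acceptance probability, and each visit there is absorbed with positive probability). Everything after that is routine matrix bookkeeping.
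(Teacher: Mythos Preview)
Your proposal is correct and follows essentially the same route as the paper: identify one step of $X^{(\avvect)}$ with the absorption law of the auxiliary chain, decompose by the value of $\tau^{(A)}$ to obtain the series $\sum_{k\ge 0}(\groutmx I_{(1-\avvect)})^{k}\groutmx I_{\avvect}$, and then pass to the Neumann inverse. The only differences are cosmetic: the paper dismisses irreducibility on $\Fset\setminus B(\avvect)$ as ``direct from the construction'' whereas you spell out the path-through-taboo-states argument, and for convergence of the series the paper simply invokes strict substochasticity of $\groutmx I_{(1-\avvect)}$ together with irreducibility of $\groutmx$, while you route through a.s.\ finiteness of $\tau^{(A)}$ and finiteness of the expected absorption time---equivalent justifications for the same fact.
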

\begin{proof}
Irreducibility of $X^{(\avvect)}$ on $\Fset\setminus B(\avvect)$ and the first statement \eqref{eq:RS-r-alpha1} follows directly from the construction.\\
For $i\in \Fset,j\in B(\avvect)$ we  obtain from the definition of acceptance probability
$\alpha_j =0$
\begin{equation}\label{eq:RS-Amatrix6}
\groutmx^{(\avvect)}(i,j) = 0.
\end{equation}
Note, that from \eqref{eq:RS-Amatrix2} follows
\begin{eqnarray}
&&P\left(X^{(A)}_n=k, Y^{(A)}_n=0, Y^{(A)}_{n-1}=0,\dots\right. \label{eq:RS-Amatrix5}\\
&&\left.\qquad\qquad\dots, Y^{(A)}_{2}=0, Y^{(A)}_{1}=0, |(X^{(A)}_0,Y^{(A)}_0)=(i,0)\right)
=\left( \left(\groutmx I_{(1 - \avvect)}\right)^{n}\right)_{ik}\nonumber
\end{eqnarray}
  It follows
for $i\in \Fset,j\in \Fset \setminus B(\avvect)$
\begin{eqnarray*}
&&\groutmx_{ij}^{(\avvect)}
  = P\left(X^{(A)}(\tau^{(A)})=j |(X^{(A)}_0,Y^{(A)}_0)=(i,0)\right)\\
 & = & P\left(X^{(A)}(\tau^{(A)})=j,Y^{(A)}(\tau^{(A)})=1 |(X^{(A)}_0,Y^{(A)}_0)=(i,0)\right)\\
  & = & \sum_{n=1}^{\infty}   P\left(X^{(A)}(\tau^{(A)})=j,Y^{(A)}(\tau^{(A)})=1, \tau^{(A)}=n
  |(X^{(A)}_0,Y^{(A)}_0)=(i,0)\right)\\
 & = & \sum_{n=1}^{\infty}\sum_{k\in \Fset} 
  P\left(X^{(A)}_n=j,Y^{(A)}_n=1,X^{(A)}_{n-1}=k, Y^{(A)}_{n-1}=0, Y^{(A)}_{n-2}=0,\dots\right.\\
 & & \left.\qquad\qquad\qquad\qquad \dots, Y^{(A)}_{2}=0, Y^{(A)}_{1}=0
  |(X^{(A)}_0,Y^{(A)}_0)=(i,0)\right)\\
 & \stackrel{_{(1)}}{=} & \sum_{n=1}^{\infty}\sum_{k\in \Fset}
   P\left(X^{(A)}_n=j,Y^{(A)}_n=1|X^{(A)}_{n-1}=k, Y^{(A)}_{n-1}=0\right)\\
 && \qquad\qquad \cdot P\left(X^{(A)}_{n-1}=k, Y^{(A)}_{n-1}=0, Y^{(A)}_{n-2}=0,\right.\dots\\
 && \qquad\qquad\qquad\qquad\qquad\left.\dots, Y^{(A)}_{2}=0, Y^{(A)}_{1}=0 |(X^{(A)}_0,Y^{(A)}_0)=(i,0)\right)\\
  & \stackrel{_{(2)}}{=} & \sum_{k\in \Fset} \sum_{n=1}^{\infty} \left(\groutmx I_{\avvect}\right)_{kj}
  \left( \left(\groutmx I_{(1-\avvect)}\right)^{n-1}\right)_{ik}
  \stackrel{(3)}{=}  \sum_{k\in \Fset} \left(\groutmx I_{\avvect}\right)_{kj} \left(\sum_{n=1}^{\infty}
  \left(\groutmx I_{(1-\avvect)}\right)^{n-1}\right)_{ik}\\
 & = &\left(\left(I - \groutmx I_{(1-\avvect)}\right)^{-1} \groutmx I_{\avvect}\right)_{ij}\,,
  \end{eqnarray*}
  which is \eqref{eq:RS-r-alpha2}.
Here (1) utilizes the Markov property of $(X^{(A)},Y^{(A)})$, (2) follows from \eqref{eq:RS-Amatrix5} and \eqref{eq:RS-Amatrix1},
and in (3) convergence of $\sum_{n=1}^{\infty} \left(\groutmx I_{(1-\avvect)}\right)^{n-1}$
follows from irreducibility of $\groutmx$ and from the substochasticity of $\groutmx I_{(1-\avvect)}$, which is strict for at least one row.
\end{proof}
We remark that we can see the stochasticity of $r^{(\avvect)}$ from
\begin{eqnarray*}
\groutmx^{(\avvect)}\mathbf{e} & \stackrel{\eqref{eq:RS-r-alpha2}}{=} & (I-\groutmx I_{(1-\avvect)})^{-1}
\big[\groutmx \underbrace{I_{\avvect}}_{=(I-I_{(1-\avvect)})}\big]\mathbf{e}
  =  (I-\groutmx I_{(1-\avvect)})^{-1}\left[\groutmx {(I-I_{(1-\avvect)})}\right]\mathbf{e}\\
  & = &(I-\groutmx I_{(1-\avvect)})^{-1}\big[\underbrace{\groutmx \mathbf{e}}_{= I \mathbf{e}} - \groutmx I_{(1-\avvect)}\mathbf{e}\big]
  =  (I-\groutmx I_{(1-\avvect)})^{-1}(I-\groutmx I_{(1-\avvect)})\mathbf{e}
  =  \mathbf{e}\,.
\end{eqnarray*}
Furthermore, we note that the states in $B(\avvect)$ are inessential, because the $B(\avvect)$-columns
$(\groutmx^{(\avvect)}_{ij}:i\in \Fset )$ for  $j\in B(\avvect)$ are zero.

\begin{ex}\label{ex:RS-routing-skipping}
Consider a set  $\Fset = \{0,1,2,3,4\}$
and a routing matrix
\begin{equation*}
\groutmx=\left(\begin{array}{c|ccccc}
 & 0 & 1 & 2 & 3 & 4\\
\hline 0 &  & 1\\
1 &  &  & 1\\
2 &  &  &  & 0.6 & 0.4\\
3 & 1\\
4 &  &  &  & 1
\end{array}\right)\,.
\end{equation*}

After applying the skipping rule to $r$
with availability vector $\avvect=(1,1,0.5,1,1)$
we get
\begin{equation*}
\groutmx^{(\avvect)}=\left(\begin{array}{c|ccccc}
 & 0 & 1 & 2 & 3 & 4\\
\hline 0 &  & 1\\
1 &  &  & 0.5 & 0.3 & 0.2 \\
2 &  &  &  & 0.6 & 0.4\\
3 & 1\\
4 &  &  & & 1
\end{array}\right)\,.
\end{equation*}
See Figure \ref{fig:RS-routing-skipping}.
\begin{figure}[H]
\centering
\begin{subfigure}[b]{0.3\textwidth}
\begin{tikzpicture}

\newcommand{\yslant}{0.2} 
\newcommand{\xslant}{-0.5}
\newcommand{\height}{6}

\begin{scope}


\end{scope}

\begin{scope}
  \node(0-1-orgn)[] at (2.25,0.5) {};
  \node(1-orgn)[] at (0,0.5) {};
  \node(2-orgn)[] at (2.5,2.5) {};
  \node(2-orgn-over)[] at (3,4) {};
  \node(2-orgn-under)[] at (2.5,2) {};
  \node(2-orgn-1)[] at (3,2.5) {}; 
  \node(2-orgn-2)[] at (3,3) {};
  \node(2-orgn-3)[] at (0,3) {};
  \node(3-orgn)[] at (4.5,1) {};
  \node(3-orgn-over)[] at (4.5,4) {};
  \node(4-orgn)[] at (0,2.75) {};
  \node(4-orgn-over)[] at (0,4) {};
  \node(4-orgn-under)[] at (0.5,2) {};
  \node(5-orgn)[] at (0,1.75) {};

  \node(0-1)[draw, shape=circle, color=external.fg.color] at (0-1-orgn) {$0$};
  \node(1)[draw, shape=circle] at (1-orgn) {$1$};
  \node(2-white)[draw, shape=circle, fill=white] at (2-orgn) {$2$};
  \node(2)[draw, shape=circle, fill=white] at (2-orgn) {$2$};
  \node(3)[draw, shape=circle] at (3-orgn) {$3$};
  \node(4)[draw, shape=circle] at (4-orgn) {$4$};

\draw[arrows={-latex}, thick,
  line width=\customerslinewidth] (1) -- (2.west);
\draw[arrows={-latex}, thick,
  line width=\customerslinewidth] (2) -- (3);
\draw[arrows={-latex}, thick,
  line width=\customerslinewidth] (3) -- (0-1);
\draw[arrows={-latex}, thick,
  line width=\customerslinewidth] (0-1) -- (1.east);
\draw[arrows={-latex}, thick,
  line width=\customerslinewidth] (2.west) -- (4.east);

\draw[arrows={-latex}, thick,
  line width=\customerslinewidth] (4) -- (3.west);

\end{scope}
\end{tikzpicture}
\caption{original matrix $\groutmx$}
\end{subfigure}
\hspace{2cm}
\begin{subfigure}[b]{0.3\textwidth}
\begin{tikzpicture}

\newcommand{\yslant}{0.2} 
\newcommand{\xslant}{-0.5}
\newcommand{\height}{6}

\begin{scope}


\end{scope}

\begin{scope}
  \node(0-1-orgn)[] at (2.25,0.5) {};
  \node(1-orgn)[] at (0,0.5) {};
  \node(2-orgn)[] at (2.5,2.5) {};
  \node(2-orgn-under)[] at (2.5,2) {};
  \node(2-orgn-1)[] at (3,2.5) {}; 
  \node(2-orgn-2)[] at (3,3) {};
  \node(2-orgn-3)[] at (0,3) {};
  \node(3-orgn)[] at (4.5,1) {};
  \node(3-orgn-over)[] at (4.5,4) {};
  \node(4-orgn)[] at (0,2.75) {};
  \node(4-orgn-over)[] at (0,4) {};
  \node(4-orgn-under)[] at (0.5,2) {};
  \node(5-orgn)[] at (0,1.75) {};

  \node(0-1)[draw, shape=circle, color=external.fg.color] at (0-1-orgn) {$0$};
  \node(1)[draw, shape=circle] at (1-orgn) {$1$};
  \node(2-white)[draw, shape=circle, fill=white] at (2-orgn) {$2$};
  \node(2)[draw, shape=circle, fill=down.partially.bg.color] at (2-orgn) {$2$};
  \node(3)[draw, shape=circle] at (3-orgn) {$3$};
  \node(4)[draw, shape=circle] at (4-orgn) {$4$};

\draw[arrows={-latex}, thick,
  line width=\customerslinewidth] (1) -- (2.west);
\draw[arrows={-latex}, thick,
  line width=\customerslinewidth] (2) -- (3);
\draw[arrows={-latex}, thick,
  line width=\customerslinewidth] (3) -- (0-1);
\draw[arrows={-latex}, thick,
  line width=\customerslinewidth] (0-1) -- (1.east);

\draw[arrows={-latex}, thick,
  line width=\customerslinewidth] (2.west) -- (4.east);

\draw[arrows={-latex}, thick,
  line width=\customerslinewidth] (4) -- (3.west);

\draw[arrows={-latex}, thick, line width=\customerslinewidth,color=blue] (1) .. controls (2-orgn-under) .. (3.west);

\draw[arrows={-latex}, thick, line width=\customerslinewidth,
color=blue,bend right=40] (1) .. controls (4-orgn-under) ..(4);
\end{scope}
\end{tikzpicture}
\caption{modified matrix $\groutmx^{(\avvect)}$}
\end{subfigure}
\caption{Matrix $\groutmx$ on $\Fset = \{0,1,2,3,4\}$ and $\groutmx^{(\avvect)}$ with $\avvect=(1,1,0.5,1,1)$
from \prettyref{ex:RS-routing-skipping}.
\label{fig:RS-routing-skipping}
}
\end{figure}
\end{ex}

\subsubsection{Stationary distribution}\label{sect:RS-skippingequilibrium}
Recall that $X$ is irreducible with transition matrix $\groutmx$ on the finite state space $\Fset$ and has the unique
stationary distribution  $\eta=(\eta_j:j\in \Fset)$.
For a non zero vector $\avvect=(\alpha_j:j\in \Fset)$ the chain $X^{(\avvect)}$ with state space $\Fset$
and  transition matrix $\groutmx^{(\avvect)}$
is irreducible on $\Fset\setminus B(\avvect)$, with $B(\avvect)=\{j\in \Fset:\alpha_j=0\}$.
We denote its stationary distribution by $\eta^{(\avvect)}=(\eta^{(\avvect)}(j):j\in \Fset)$,
which has support $\Fset\setminus B(\avvect)$. We will study the relation between $\eta$ and
 $\eta^{(\avvect)}$.

\begin{prop}
\label{prop:GS-y-from-eta}Let $x$ be a solution of the balance equation
$x\cdot \groutmx=x$ then
\[
y=x\cdot I_{\avvect}
\]
 solves the steady state equation $y\cdot \groutmx^{(\avvect)}=y$ of the modified Markov chain
 with randomized skipping.
\end{prop}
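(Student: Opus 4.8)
The plan is to read off from \prettyref{thm:RS-r-alpha} the closed form
\[
\groutmx^{(\avvect)}=\left(I-\groutmx I_{(1-\avvect)}\right)^{-1}\groutmx I_{\avvect}
\]
and to verify $y\cdot\groutmx^{(\avvect)}=y$ by a short algebraic manipulation exploiting $x\cdot\groutmx=x$. Write $S:=\left(I-\groutmx I_{(1-\avvect)}\right)^{-1}=\sum_{k\ge0}\left(\groutmx I_{(1-\avvect)}\right)^{k}$; this Neumann series converges by exactly the argument used at the end of the proof of \prettyref{thm:RS-r-alpha} (the matrix $\groutmx I_{(1-\avvect)}$ is substochastic, strictly so in at least one row since $\avvect\neq0$, and $\groutmx$ is irreducible, so its spectral radius is $<1$). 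Thus $\groutmx^{(\avvect)}=S\,\groutmx I_{\avvect}$ and $y\cdot\groutmx^{(\avvect)}=\left(xI_{\avvect}\right)S\,\groutmx I_{\avvect}$.

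The crux is the identity $xI_{\avvect}S=x$. This is the one place where care is needed, since the diagonal matrices $I_{\avvect}$ and $I_{(1-\avvect)}$ commute neither with $\groutmx$ nor with $S$, so the order of the factors matters throughout. I would obtain it from the resolvent identity $S=I+\groutmx I_{(1-\avvect)}S$: multiplying on the left by $x$ and using $x\groutmx=x$ gives $xS=x+x\groutmx I_{(1-\avvect)}S=x+xI_{(1-\avvect)}S$, hence $x\left(I-I_{(1-\avvect)}\right)S=x$, i.e. $xI_{\avvect}S=x$ because $I_{\avvect}=I-I_{(1-\avvect)}$.

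Plugging this back in finishes the proof with one further use of $x\groutmx=x$:
\[
y\cdot\groutmx^{(\avvect)}=\left(xI_{\avvect}S\right)\groutmx I_{\avvect}=x\,\groutmx I_{\avvect}=xI_{\avvect}=y.
\]
So no genuine obstacle remains beyond the bookkeeping of the non-commuting factors; the only point deserving an explicit word is the invertibility of $I-\groutmx I_{(1-\avvect)}$, which is already available from \prettyref{thm:RS-r-alpha}. As a sanity check, since $\groutmx$ is irreducible on the finite set $\Fset$ the solutions of $x\groutmx=x$ form the one-dimensional span of $\eta$, so $y=xI_{\avvect}$ is supported on $\Fset\setminus B(\avvect)$ and non-zero for $x\neq0$, consistent with $\groutmx^{(\avvect)}$ being irreducible on $\Fset\setminus B(\avvect)$; normalizing $y$ (for $x=\eta$) then identifies $\eta^{(\avvect)}$ and explains the announced simple relation between the two stationary distributions.
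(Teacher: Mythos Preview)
Your proof is correct and follows essentially the same algebraic route as the paper's proof: both hinge on the identity $xI_{\avvect}=x(I-\groutmx I_{(1-\avvect)})$, which is immediate from $x\groutmx=x$ and $I_{\avvect}=I-I_{(1-\avvect)}$, and which is equivalent to your $xI_{\avvect}S=x$. The only cosmetic difference is that the paper argues forward from $x\groutmx=x$ to discover that $y:=x(I-\groutmx I_{(1-\avvect)})$ solves $y\groutmx^{(\avvect)}=y$ and then simplifies $y$ to $xI_{\avvect}$, whereas you start from $y=xI_{\avvect}$ and verify the balance equation via the resolvent identity; the computations are the same in content.
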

\begin{proof} From  $x\cdot \groutmx  =  x$ we obtain
\begin{eqnarray*}
 x\groutmx\underbrace{\left(I_{\avvect}+I_{(1-\avvect)}\right)}_{=I}  =  x
\Longleftrightarrow x\groutmx I_{\avvect}  =  x-x\groutmx I_{1-\avvect}
\Longleftrightarrow x\groutmx I_{\avvect}  =  x(I-\groutmx I_{(1-\avvect)})\\
\Longleftrightarrow x\underbrace{(I-\groutmx I_{(1-\avvect)})(I-\groutmx I_{(1-\avvect)})^{-1}}_{=I}\groutmx I_{\avvect}  =  x(I-\groutmx I_{(1-\avvect)})
\end{eqnarray*}
and with $y := x(I-\groutmx I_{(1-\alpha)})$ follows that a required solution of
$y\cdot \groutmx^{(\avvect)}  = y$ is
\begin{eqnarray*}
y & 
{=} & x(I-\groutmx I_{(1-\avvect)})=x-xI_{(1-\avvect)}=xI_{\avvect}
\,.
\end{eqnarray*}
\end{proof}

\begin{prop}\label{prop:RS-x-from-eta-alpha}
Let $y$ be a solution of the balance
equation $y\cdot \groutmx^{(\avvect)}=y$ then 
\begin{equation}
x=y(I-\groutmx I_{(1-\avvect)})^{-1}\label{eq:RS-x-definition}
\end{equation}
 is a solution of the steady state  equation    
$x\cdot \groutmx=x$ 
 and it holds
\begin{equation}
y=(\alpha_{j}x_{j}:j\in \Fset)\,.\label{prop:GS-eta-alpha-from-eta}
\end{equation}
\end{prop}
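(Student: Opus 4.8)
The plan is to run the computation behind Proposition~\ref{prop:GS-y-from-eta} in reverse, using the closed form $\groutmx^{(\avvect)}=(I-\groutmx I_{(1-\avvect)})^{-1}\groutmx I_{\avvect}$ established in Theorem~\ref{thm:RS-r-alpha}. First I would note that $I-\groutmx I_{(1-\avvect)}$ is invertible: this is exactly the fact used in step~(3) of the proof of Theorem~\ref{thm:RS-r-alpha}, where $\sum_{k\ge0}(\groutmx I_{(1-\avvect)})^{k}$ is shown to converge from the irreducibility of $\groutmx$ and the strict substochasticity (in at least one row, since $\avvect\neq0$) of $\groutmx I_{(1-\avvect)}$. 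Hence $x:=y(I-\groutmx I_{(1-\avvect)})^{-1}$ is a well-defined row vector, and multiplying \eqref{eq:RS-x-definition} on the right by $I-\groutmx I_{(1-\avvect)}$ gives
\[
x-x\groutmx I_{(1-\avvect)} = y.
\]

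Next I would multiply $x$ on the right by $\groutmx I_{\avvect}$ and invoke the hypothesis $y\,\groutmx^{(\avvect)}=y$ together with the formula for $\groutmx^{(\avvect)}$:
\[
x\,\groutmx I_{\avvect} = y(I-\groutmx I_{(1-\avvect)})^{-1}\groutmx I_{\avvect} = y\,\groutmx^{(\avvect)} = y.
\]
Subtracting this identity from the previous one and using $I_{(1-\avvect)}+I_{\avvect}=I$ yields
\[
x = x\groutmx I_{(1-\avvect)} + x\groutmx I_{\avvect} = x\groutmx\bigl(I_{(1-\avvect)}+I_{\avvect}\bigr) = x\groutmx,
\]
which is the first assertion $x\cdot\groutmx=x$.

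For the remaining identity $y=(\alpha_j x_j:j\in\Fset)$, i.e. $y=xI_{\avvect}$, I would substitute $x\groutmx=x$ back into the first displayed equation $x-x\groutmx I_{(1-\avvect)}=y$, obtaining $y = x - xI_{(1-\avvect)} = x(I-I_{(1-\avvect)}) = xI_{\avvect}$, which coordinatewise reads $y_j=\alpha_j x_j$. I do not anticipate a genuine obstacle: the only points needing care are the invertibility of $I-\groutmx I_{(1-\avvect)}$ (inherited from Theorem~\ref{thm:RS-r-alpha} and the standing hypothesis $\avvect\neq0$) and keeping the row-vector $\times$ matrix multiplications in the correct order. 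One may optionally add that, since $X$ is irreducible on the finite set $\Fset$, the solution space of $x\groutmx=x$ is one-dimensional, so the $x$ constructed here is automatically a positive scalar multiple of $\eta$; together with Proposition~\ref{prop:GS-y-from-eta} this pins down $\eta^{(\avvect)}$ as the normalization of $\eta I_{\avvect}$.
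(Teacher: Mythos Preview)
Your proposal is correct and follows essentially the same route as the paper: both arguments start from the closed form $\groutmx^{(\avvect)}=(I-\groutmx I_{(1-\avvect)})^{-1}\groutmx I_{\avvect}$, set $x=y(I-\groutmx I_{(1-\avvect)})^{-1}$, derive $x\groutmx I_{\avvect}=y=x(I-\groutmx I_{(1-\avvect)})$, and collapse $I_{\avvect}+I_{(1-\avvect)}=I$ to obtain $x\groutmx=x$, then read off $y=xI_{\avvect}$. Your version is slightly more explicit about invertibility and the order of operations, but there is no substantive difference.
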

\begin{proof}
From the definition of  $\groutmx^{(\avvect)}$  follows
\begin{eqnarray*}
y\underbrace{(I-\groutmx I_{(1-\avvect)})^{-1}\groutmx I_{\avvect}}_{=\groutmx^{(\avvect)}} & = & y\underbrace{(I-\groutmx I_{(1-\avvect)})^{-1}(I-\groutmx I_{(1-\avvect)})}_{=I}
\end{eqnarray*}
So $x=y(I-\groutmx I_{(1-\avvect)})^{-1}$ fulfils
\begin{eqnarray*}
x\groutmx I_{\avvect}  =  x(I-\groutmx I_{(1-\avvect)})
\Longleftrightarrow x\groutmx\underbrace{\left(I_{\avvect}+I_{(1-\avvect)}\right)}_{=I}  =  x
\end{eqnarray*}
which is $x\cdot \groutmx  =  x$. \eqref{prop:GS-eta-alpha-from-eta} follows
from $x(I-\groutmx I_{(1-\avvect)})  =  y$ as in  \prettyref{prop:GS-y-from-eta}.
\end{proof}

\begin{theorem}\label{thm:RS-eta-alpha}
If $\eta$ is the unique steady state distribution of the irreducible Markov chain $X$ on the finite state space $\Fset$, then the  unique steady state distribution $\eta^{(\avvect)}$ of
the Markov chain $X^{(\avvect)}$ is
\begin{equation}
\eta^{(\avvect)}=\left(C^{(\avvect)}\right)^{-1}(\eta_{j}\alpha_{j}:j\in \Fset)
\label{eq:RS-eta-alpha-from-eta}
\end{equation}
with normalization constant
\begin{equation}
C^{(\avvect)}=(\eta I_{\avvect}\mathbf{e}) = \langle\eta,\avvect\rangle\label{eq:RS-eta-alpha-normalization-constant}\,.
\end{equation}
$\eta^{(\avvect)}$ has support $\Fset\setminus B(\avvect)$.
\end{theorem}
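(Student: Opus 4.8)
The plan is to read the result off from the two preceding propositions together with the irreducibility assertion of \prettyref{thm:RS-r-alpha}; essentially all the structural work has already been done there.

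First I would apply \prettyref{prop:GS-y-from-eta} with $x=\eta$: since $\eta$ solves $\eta\cdot\groutmx=\eta$, the vector
\[
y := \eta\cdot I_{\avvect} = (\eta_j\alpha_j : j\in\Fset)
\]
solves the balance equation $y\cdot\groutmx^{(\avvect)}=y$ of the modified chain. Because $X$ is irreducible on the finite set $\Fset$ we have $\eta_j>0$ for every $j$, so $y_j=\eta_j\alpha_j$ vanishes exactly on $B(\avvect)=\{j:\alpha_j=0\}$ and is strictly positive on $\Fset\setminus B(\avvect)$; since $\avvect$ is non zero this makes $y$ a non-negative, non-zero, (trivially summable) solution with total mass $\langle\eta,\avvect\rangle=\sum_j\eta_j\alpha_j>0$. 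Dividing by its mass yields the probability vector
\[
\eta^{(\avvect)} := \big(C^{(\avvect)}\big)^{-1}(\eta_j\alpha_j : j\in\Fset), \qquad C^{(\avvect)} = y\mathbf{e} = \eta I_{\avvect}\mathbf{e} = \langle\eta,\avvect\rangle,
\]
which is exactly \eqref{eq:RS-eta-alpha-from-eta} and \eqref{eq:RS-eta-alpha-normalization-constant}, and whose support is precisely $\Fset\setminus B(\avvect)$ by the computation just made. This settles existence.

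For uniqueness I would invoke \prettyref{prop:RS-x-from-eta-alpha}: if $\tilde y$ is any solution of $\tilde y\cdot\groutmx^{(\avvect)}=\tilde y$, then $\tilde x:=\tilde y(I-\groutmx I_{(1-\avvect)})^{-1}$ solves $\tilde x\cdot\groutmx=\tilde x$ and $\tilde y=(\alpha_j\tilde x_j : j\in\Fset)$. Since $\eta$ is the unique stationary distribution of the irreducible finite chain $X$, the solution space of $x\cdot\groutmx=x$ is one-dimensional, so $\tilde x=c\,\eta$ for a scalar $c$, whence $\tilde y=c\,(\alpha_j\eta_j : j\in\Fset)=c\,y$; as $\tilde y\mapsto\tilde x$ is multiplication by an invertible matrix, the solution space of $y\cdot\groutmx^{(\avvect)}=y$ is therefore also one-dimensional, and the normalized vector $\eta^{(\avvect)}$ above is its only probability member. (Alternatively one may argue directly that $\groutmx^{(\avvect)}$ is irreducible on $\Fset\setminus B(\avvect)$ while the states of $B(\avvect)$ are inessential, so a stationary distribution is unique and supported on $\Fset\setminus B(\avvect)$.)

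I do not anticipate a genuine obstacle: the only points requiring care are (i) checking $\langle\eta,\avvect\rangle>0$ so the normalization is legitimate, which relies on $\eta$ having full support, and (ii) pinning down that the support of $\eta^{(\avvect)}$ is exactly $\Fset\setminus B(\avvect)$ and identifying it with the unique stationary law of the chain restricted to that set.
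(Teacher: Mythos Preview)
Your proposal is correct and follows essentially the same route as the paper: the paper's proof simply observes that Propositions~\ref{prop:GS-y-from-eta} and~\ref{prop:RS-x-from-eta-alpha} set up a one-to-one correspondence between solutions of $x\cdot\groutmx=x$ and $y\cdot\groutmx^{(\avvect)}=y$, so the unique stochastic solution $\eta$ on one side yields the unique normalized solution $\eta^{(\avvect)}$ on the other. Your write-up spells out the uniqueness direction and the support statement more explicitly than the paper does, but the argument is the same.
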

\begin{proof}
From Propositions \ref{prop:GS-y-from-eta} and \ref{prop:RS-x-from-eta-alpha} we know that there is a one-to-one connection between all solutions of $x\cdot \groutmx  =  x$ and
$y\cdot \groutmx^{(\avvect)}=y$.\\
 From the assumptions on $X$ we know that $\eta$ is the
unique stochastic solution of $x\cdot \groutmx  =  x$.
Its normalized companion is therefore \eqref{eq:RS-eta-alpha-from-eta} by \eqref{prop:GS-eta-alpha-from-eta}.
\end{proof}

\subsection{Randomized reflection}
\label{sect:RS-randomreflec}
An important problem in the control of transmission networks is to react to full buffers at receiver
stations by the network provider. There are many detailed control regimes to resolve blocking
which occurs when buffers overflow. It turned out that it is usually difficult to construct
analytical network models for this problem which admit closed form solutions for main performance metrics.

The following control principle is common to resolve blocking situations in networks with blocking of stations due to full buffers or blocking due to  resource sharing, and is
called blocking principle {\sc Repetitive Service -
  Random Destination (rs--rd)}. \\
For applications in modeling of communication protocols in systems with finite buffers or for ALOHA-type protocols see \cite[Section 5.11]{kleinrock:76},
Within the abstract framework of reversible processes it occurs in \cite[Proposition II.5.10]{liggett:85}.

The fundamental idea is that whenever a packet is to be send from some node $i$ to node $j$ and it
is observed that the buffer for incoming packets at $j$ is full the packet is rejected (lost)
and node $i$
who has saved a copy tries to resend this packet ({\sc Repetitive Service)}, but not necessarily to $j$ but selects another destination node $k$ ({\sc Random Destination}). This reflection procedure is iterated until the packet is sent to a node with free buffer places.\\

We will apply this scheme to modify the homogeneous irreducible Markov chain
$X=(X_{n}:n\in\mathbb{N}_{0})$  on the
finite state space $\Fset$ with one-step transition probability matrix $\groutmx=(\groutmx(i,j):i,j\in \Fset)$ and
(unique) steady state distribution $\eta=(\eta_i:i\in \Fset)$.

In terms of a random walk on
$\Fset$ governed by $r$ an intuitive description is:
If the random walker's (RW) path governed by the Markov chain  is restricted by a taboo set
$B\subsetneq \Fset$, RW applies\\
{\bf Reflection at $B$}:
If RW is in state $i\in \Fset$ and selects (with probability $\groutmx(i,j)$) its destination
$j\in \Fset \setminus B$, this jump is allowed and immediately performed.
If (with probability $\groutmx(i,k)$) he decides to jump to state $k\in B$, he is reflected at $k$ and spends at least one further time slot at node $i$. Thereafter he restarts the procedure possibly with
another destination node, i.e. with probability $\groutmx(i,l)$ he selects successor state $l$; if
$l\in \Fset\setminus B$ then the jump is performed immediately, but if $l\in B$ RW is reflected again; and so on.\\

Our extended modification scheme for $X$ and $\groutmx$ is a randomized generalization of that reflection
scheme.
For the states in $\Fset$ are given  ''acceptance probabilities'' by some vector
$\avvect=(\alpha_j\in [0,1]: j\in \Fset)$. The random walker (RW)  selects his itinerary under $r$ and the constraints $\avvect$ by \\
{\bf Randomized reflection with acceptance probabilities} $\avvect$:
If RW is in state $i\in \Fset$ and selects (with probability $\groutmx(i,j)$) its destination
$j\in \Fset$, a Bernoulli experiment is performed with success (acceptance)
probability $\alpha_j$,
independent of the past, given $j$. If the experiment is successful ($=1$),
this jump is accepted, immediately performed, and RW settles down at $j$ for at least one time slot.
If the experiment is not successful ($=0$), this jump is not accepted and
RW stays on at $i$ for at least one further time slot. If this slot expires with probability $\groutmx(i,l)$ RW selects another possible successor state $l$;
thereafter a Bernoulli experiment is performed with success (acceptance)
probability $\alpha_l$,
independent of the past, given $l$. If the experiment is successful ($=1$),
this jump is accepted, immediately performed, and RW settles down at $l$ for
at least one time slot.
If the experiment is not successful ($=0$), this jump is not accepted and
RW stays on at $i$ for at least one further time slot; and so on.


\begin{ex} \label{ex:RS-reflection-deterministic}
If for $B \subsetneq \Fset$
\begin{equation*} 
\alpha_j =
\begin{cases}
0 & \text{if}~~~j\in B,\\
1 & \text{if}~~~j\in \Fset \setminus B\,,
\end{cases}
\end{equation*}
then we have exactly the classical reflection at taboo set $B$ as described above, because a jump to
$j\in B$ is never accepted, but whenever a jump to $j\in \Fset \setminus B$ is proposed, this will be accepted with probability $1$.

If we have some general $\avvect$ then the set $B(\avvect)=\{j\in \Fset : \alpha_j =0\}$
is a taboo set for the ''randomized reflection process''.
\end{ex}

\subsubsection{Transition matrix and stationary distribution}\label{sect:RS-reflectionmatrix}
It is easy to see that this construction of a modified chain by randomized reflection
generates a new homogeneous Markov chain.

For the Markov chain $X=(X_n:n\in \N_0)$ with transition matrix $\groutmx=(\groutmx(i,j):i,j\in \Fset)$
and a non zero vector  $\avvect=(\alpha_j:j\in \Fset)$ of acceptance probabilities
denote by  $X^{(\avvect)}$ the Markov chain modification of $X$ under randomized reflection and by $\groutmx^{(\avvect)}$ the transition matrix of $X^{(\avvect)}$, and
by $B(\avvect)=\{j\in \Fset: \alpha_j =0\}\subsetneq \Fset$ the taboo set for  $X^{(\avvect)}$.\\
Then we obtain
\begin{equation}\label{eq:RS-reflection-alpha1}
\groutmx^{(\avvect)}(i,j)=
\begin{cases} \groutmx(i,j)\alpha_j,&
 i,j\in  \Fset,    \:  i\not=j,\\
\groutmx(i,i) + \sum_{k\in \Fset} \groutmx(i,k)(1-\alpha_j),&  i\in \Fset,  \: i=j\,.
\end{cases}
\end{equation}
The Markov chain $X^{(\avvect)}$ with state space $\Fset$ and transition matrix $\groutmx^{(\avvect)}$ may be
 reducible even on $\Fset\setminus B(\avvect)$.\\

A standard  assumption in the literature for applying this reflection principle as rerouting scheme  is that the original routing Markov chain $X$, resp. its transition matrix $\groutmx$ is reversible
for some probability $\eta=(\eta_i: i \in \Fset)$.
We shall set this assumption always in force when investigating this protocol.
To determine the appropriate modified transition matrix $\groutmx^{(\avvect)}$ and the
associated stationary distribution $\eta^{(\avvect)}$ is direct in this case.
\begin{prop}\label{prop:RS-reflection-eta-alpha}
If $\eta$ is the unique steady state distribution of the reversible irreducible Markov chain $X$ on the finite state space $\Fset$, then the Markov chain $X^{(\avvect)}$ is reversible as well with
 steady state distribution $\eta^{(\avvect)}$ with
\begin{equation}
\eta^{(\avvect)}=\left(C^{(\avvect)}\right)^{-1}(\eta_{j}\alpha_{j}:j\in \Fset)
\label{eq:RS-eta-alpha-from-eta-refl}
\end{equation}
with normalization constant
\begin{equation}
C^{(\avvect)}=(\eta I_{\avvect}\mathbf{e}) = \langle\eta,\avvect\rangle\label{eq:RS-eta-alpha-normalization-constant-refl}
\end{equation}
$\eta^{(\avvect)}$ has support $\Fset\setminus B(\avvect)$.
\end{prop}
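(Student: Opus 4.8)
The plan is to verify the detailed balance equations for $\groutmx^{(\avvect)}$ directly with respect to the candidate (unnormalized) measure $\pi=(\pi_j:j\in\Fset)$ given by $\pi_j:=\eta_j\alpha_j$, and then to normalize. From \eqref{eq:RS-reflection-alpha1} the only entries of $\groutmx^{(\avvect)}$ that enter a detailed balance relation between two \emph{distinct} states $i\neq j$ are the off-diagonal ones, $\groutmx^{(\avvect)}(i,j)=\groutmx(i,j)\alpha_j$; the diagonal entry $\groutmx^{(\avvect)}(i,i)$ — which is to be read as the total probability that RW does not move in one slot, i.e. $\groutmx(i,i)\alpha_i+\sum_{k\in\Fset}\groutmx(i,k)(1-\alpha_k)$, so that $\groutmx^{(\avvect)}$ is genuinely row-stochastic — only appears in the trivially satisfied equations $\pi_i\groutmx^{(\avvect)}(i,i)=\pi_i\groutmx^{(\avvect)}(i,i)$.

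For $i\neq j$ I would compute
\[
\pi_i\,\groutmx^{(\avvect)}(i,j)=\eta_i\alpha_i\,\groutmx(i,j)\,\alpha_j,\qquad
\pi_j\,\groutmx^{(\avvect)}(j,i)=\eta_j\alpha_j\,\groutmx(j,i)\,\alpha_i,
\]
and these two expressions coincide precisely because, by the standing assumption, $\eta$ is reversible for $\groutmx$, i.e. $\eta_i\groutmx(i,j)=\eta_j\groutmx(j,i)$. Hence $\pi$ is in detailed balance with $\groutmx^{(\avvect)}$; summing the balance relation over $i$ and using that $\groutmx^{(\avvect)}$ is row-stochastic yields $\pi\groutmx^{(\avvect)}=\pi$. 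Since $X$ is irreducible on the finite set $\Fset$ we have $\eta_j>0$ for every $j$, and since $\avvect\neq 0$ the constant $C^{(\avvect)}:=\sum_{j\in\Fset}\eta_j\alpha_j=\eta I_{\avvect}\mathbf{e}=\langle\eta,\avvect\rangle$ is strictly positive. Therefore $\eta^{(\avvect)}:=(C^{(\avvect)})^{-1}\pi=(C^{(\avvect)})^{-1}(\eta_j\alpha_j:j\in\Fset)$ is a probability vector still satisfying the detailed balance equations with $\groutmx^{(\avvect)}$, so $X^{(\avvect)}$ is reversible with stationary distribution $\eta^{(\avvect)}$; this is exactly \eqref{eq:RS-eta-alpha-from-eta-refl} and \eqref{eq:RS-eta-alpha-normalization-constant-refl}. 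Finally, $\eta^{(\avvect)}(j)=0$ iff $\alpha_j=0$ (because $\eta_j>0$), i.e. iff $j\in B(\avvect)$, which gives the asserted support $\Fset\setminus B(\avvect)$.

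Since the whole argument is a one-line reduction to the reversibility of $\groutmx$, there is no real obstacle. The only points that need a little attention are: (i) extracting the correct diagonal entries of $\groutmx^{(\avvect)}$ from the informal ``RW stays on at $i$'' description, so that one is certain $\groutmx^{(\avvect)}$ is row-stochastic — this is what lets detailed balance upgrade to stationarity; and (ii) checking $C^{(\avvect)}>0$, so that the normalization is legitimate. The possible reducibility of $X^{(\avvect)}$ on $\Fset\setminus B(\avvect)$, warned about just before the statement, does no harm here because no uniqueness of $\eta^{(\avvect)}$ is claimed.
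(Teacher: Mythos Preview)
Your proof is correct and follows exactly the paper's approach: a direct verification of the detailed balance equations for $\groutmx^{(\avvect)}$ with respect to the measure $(\eta_j\alpha_j:j\in\Fset)$, reducing to the reversibility of $\groutmx$ for $\eta$. You supply somewhat more detail than the paper (the diagonal entry, positivity of $C^{(\avvect)}$, the support claim), but the core argument is identical.
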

\begin{proof}
The proof is by directly checking the detailed balance equations for $\groutmx^{(\avvect)}$.
\begin{eqnarray*}
&&(\eta_{j}\alpha_{j})\groutmx^{(\avvect)}(j,i) = (\eta_{i}\alpha_{j})\groutmx^{(\avvect)}(i,j), \qquad i,j\in \Fset\\
&\Leftrightarrow &(\eta_{j}\alpha_{j})\groutmx(j,i)\alpha_{i} = (\eta_{i}\alpha_{i})\groutmx(i,j)\alpha_{j}, \qquad i,j\in \Fset\,.
\end{eqnarray*}
\end{proof}

\subsection{Discussion of randomization algorithms}\label{sect:RS-connection-simulation}
A closer look on our randomization procedures, considered as algorithms to manipulate a random walk and its
stationary distribution, reveals close connections to standard simulation procedures. This will be
discussed in this section and we will compare  furthermore the randomization procedures.

\subsubsection{General sampling schemes}\label{sect:RS-general-sampling}
Our starting point is an ergodic random walk $X$ on $\Fset$ with stationary distribution $\eta$.
$X$ is modified according to an acceptance regime $\avvect$. Our aim is to study the impact of this modification on the stationary distribution.
This is different to the standard problem of simulation, i.e. to generate samples from a given distribution, which is known in principle, but not easy to access. We mention two simulation algorithms which are
structurally related to our procedures.

{\bf (I)} Markov Chain Monte Carlo algorithms  start from a target distribution $\eta$ which is usually not directly accessible for which e.g.
\begin{equation*}
    \int_{\Fset} f(x) \eta (dx) = \sum_{x\in \Fset} f(x) \eta_x\,,
\end{equation*}
has to be computed. The idea is to construct a homogeneous Markov chain with stationary distribution
$\eta$ and to sample from the Markov chains' state distribution after a sufficiently long time horizon.

The construction of the one-step transition kernel following  Hastings or Metropolis
(see \cite{bremaud:99}[Section 7.7.1]) yields a two-step scheme:
\begin{enumerate}
\item from the present state generate a proposal for the next state and decide about acceptance of the proposed state,
\item
if the proposed state is rejected stay on at the present state and restart after a time slot.
\end{enumerate}
This is a procedure as descibed in Proposition \ref{prop:RS-reflection-eta-alpha}. The conclusion is that the rerouting scheme {\bf rs-rd} used to model a transmission network's reaction on full buffers and its
generalization, our randomized reflection, can be considered as MCMC processes.

{\bf (II)}  Von Neumann's acceptance-rejection method for sampling from a complicated distribution $\eta$
(see \cite{bremaud:99}[p. 292]) can be connected to our modification of a Markov chain by randomized skipping. If the random walk of Section \ref{sect:RS-randomskip} is an i.i.d. sequence the randomized skipping procedure is exactly sampling from $(\eta_j \alpha_j:j\in\Fset)$ by sampling from $\eta$ with possible rejection.\\

We remark that in MCMC algorithms usually reversible chains are constructed with acceptance
probabilities which
may depend on the departure state and the proposal. Such generalization is easily constructed here as well,
in our notation we would incorporate success probabilities $\alpha_{ij}$.
Proposition \ref{prop:RS-reflection-eta-alpha} can be modified directly to incorporate this feature.

A similar property and the proof thereof for randomized skipping for non-reversible Markov chains
seems to be not possible without loosing the simple to evaluate steady state distribution.

\subsubsection{Importance sampling}\label{sect:RS-importanceS}
Our randomization procedures resemble obviously importance sampling procedures in simulations,
because we
\begin{enumerate}
  \item produce a weighted version of the probability distribution $\eta$ on $\Fset$, and
  \item define a modified random walk that generates this weighted distribution as its limiting distribution,
\end{enumerate}
and this can therefore be considered as construction of a change of measure.
In importance sampling such change of measure is  used to make the ''more important states''
more often visited by the random walker.
An important problem is to compute expectations of the form
\begin{equation}\label{eq:RS-integration1}
    \int_{\Fset} f(x) \eta (dx) = \sum_{x\in \Fset} f(x) \eta_x\,,
\end{equation}
where  $X=(X_n:n\in \N_0)$ with transition matrix $r$ is ergodic with limiting
distribution $\eta$, where $\eta$ is not known or not easily accessible.

For simplicity we assume $\alpha_j>0$ for all $j\in \Fset$ and use the notation of the
previous sections. Recall that $C^{(\avvect)}=\langle\eta,\avvect\rangle$ is the normalization
constant of $\eta^{(\avvect)}.$
By simple formal manipulation
\begin{equation*}
 \sum_{x\in \Fset} f(x) \eta_x ~=~   \sum_{x\in \Fset} \frac{f(x)}{\langle\eta,\avvect\rangle^{-1}}
  \cdot \frac{\eta_x}{\langle\eta,\avvect\rangle}
   ~=~ \frac{ \sum_{x\in \Fset}\frac{f(x)}{\alpha_x}\cdot \frac{\eta_x\alpha_x}{\langle\eta,\avvect\rangle}}
   {\sum_{x\in \Fset}\frac{1}{\alpha_x}\cdot\frac{\eta_x\alpha_x}{\langle\eta,\avvect\rangle}}\,.
\end{equation*}

Nominator and denominator are integrals with respect to the  stationary distribution of $X^{(\avvect)}$ derived from $X$ with acceptance probability $\avvect = (\alpha_x: x \in \Fset)$.
$X$ is ergodic and $\alpha_x>0$ $\forall x \in \Fset$ implies
that $X^{(\avvect)}$ is ergodic on $\Fset$ as well.

Denote by $T(i)$ the first entrance time into $i$, then from the regenerative structure of $X^{(\avvect)}$
we have

\begin{equation*}
 \sum_{x\in \Fset}\frac{f(x)}{\alpha_x}\cdot \frac{\eta_x\alpha_x}{\langle\eta,\avvect\rangle}
 = \frac{
 E^{(\avvect)}\left[\left.
 \sum_{n=0}^{T(i)-1}\frac{f(X^{(\avvect)}_n)}{\alpha_{X^{(\avvect)}_n}}
 \right|
 X^{(\avvect)}_0=i
 \right]
 }{
 E^{(\avvect)}\left[\left.
 T(i)
 \right|
 X^{(\avvect)}_0=i
 \right]
 }
\end{equation*}
and
\begin{equation*}
\sum_{x\in \Fset}\frac{1}{\alpha_x}\cdot\frac{\eta_x\alpha_x}{\langle\eta,\avvect\rangle}
=
\frac{
 E^{(\avvect)}\left[\left.
 \sum_{n=0}^{T(i)-1}\frac{1}{\alpha_{X^{(\avvect)}_n}}
 \right|
 X^{(\avvect)}_0=i
 \right]
 }{
 E^{(\avvect)}\left[\left.
 T(i)
 \right|
 X^{(\avvect)}_0=i
 \right]
 }\,.
\end{equation*}
This yields eventually
\begin{equation*}
\sum_{x\in \Fset}{f(x)}{\eta_x}
=
\frac{
 E^{(\avvect)}\left[\left.
 \sum_{n=0}^{T(i)-1}\frac{f(X^{(\avvect)}_n)}{\alpha_{X^{(\avvect)}_n}}
 \right|
 X^{(\avvect)}_0=i
 \right]
 }{
 E^{(\avvect)}\left[\left.
 \sum_{n=0}^{T(i)-1}\frac{1}{\alpha_{X^{(\avvect)}_n}}
 \right|
 X^{(\avvect)}_0=i
 \right]
 }\,.
\end{equation*}

To obtain an estimator for $E^{(\avvect)}\left[\left.
 \sum_{n=0}^{T(i)-1}g\left(X^{(\avvect)}_n\right)
 \right|
 X^{(\avvect)}_0=i
 \right]$
denote $T^{0}(i)\equiv 0$, and
$T^{(n+1)}(i):=\inf(k>T^{(n)}(i): X^{(\avvect)}_k=i)$,
$n\geq 0$,
and take the sample means of the independent replications
\begin{equation*}
\frac{1}{K}\sum_{k=1}^{K}\left(\left.
 \sum_{n=T^{(k-1)}(i)}^{T^{(k)}(i)}
 g\left(X^{(\avvect)}_n\right)
 \right|
 X^{(\avvect)}_0=i
 \right)\,,
\end{equation*}
which converge a.s. to the target expression for
$K\rightarrow \infty$.

So, if it is possible to simulate without too much effort the system which is described by the
Markov chain $ X^{(\avvect)}$, we can approximate the integral by the quotient of the two time averages over the regeneration periods.

We remark that by our simple construction we not only can make important states more probable to visit, but for a given initial
state $i$ we can control the expected length of the regeneration period.\\

If $X$ is reversible, the one step transition matrix of the chain modified  by randomized
reflection is directly accessible, see \eqref{eq:RS-reflection-alpha1}.

In the general case when $X$ is modified by randomized skipping the one step transition matrix
of the  modified chain is possibly not directly available, see \eqref{eq:RS-r-alpha2}.
 We can remedy this drawback by simulating instead the auxiliary chain $(X^{(A)},Y^{(A)})$
(with obvious modifications) which is used to find the one step transition matrix for
randomized skipping in Section \ref{sect:RS-skippingmatrix}.
The  modification of $(X^{(A)},Y^{(A)})$ is not to make $\Fset\times \{1\}$ absorbing.
The transition rates \eqref{eq:RS-Amatrix3} and \eqref{eq:RS-Amatrix4} are replaced by
\begin{eqnarray}
P(X^{(A)}_{n+1}=j,Y^{(A)}_{n+1}=1|X^{(A)}_{n}=i,Y^{(A)}_{n}=1) & = &
\groutmx(i,j) \alpha_{j} ,\label{eq:RS-Amatrix7}\\
P(X^{(A)}_{n+1}=j,Y^{(A)}_{n+1}=0|X^{(A)}_{n}=i,Y^{(A)}_{n}=1) & = &
 \groutmx(i,j) (1-\alpha_{j})\,.\label{eq:RS-Amatrix8}
\end{eqnarray}

In the time average over a regeneration period we then have to sum instead of
 ${f(X^{(\avvect)}_n(\omega))}$ the values ${f(X^{(A)}_n(\omega))\cdot Y^{(A)}_n(\omega)}$
and instead of $\frac{1}{\alpha_{X^{(\avvect)}_n(\omega)}}$ the values
$\frac{Y^{(A)}_n(\omega)}{\alpha_{X^{(\avvect)}_n(\omega)}}$
and to divide
by the expected sojourn time in $\Fset\times\{1\}$  until return to $(i,1)$ given start in  $(i,1)$
(which cancels out).

\subsubsection{Comparison of randomized skipping and randomized reflection}\label{sect:RS-comparison-sk-refl}
We can compare transition matrices having the same dimension and stationary distribution
by Peskun ordering (see \cite{peskun:73}) which is standard ordering in MCMC simulations.
\begin{defn}\label{def:RS-peskun}
Let $\groutmx=(  \groutmx{(i,j)}:i,j\in \Fset)$ and $ \groutmx'=(\groutmx'{(i,j)}:i,j\in \Fset)$ be transition
matrices on a finite set $\Fset$ such that $\xi \groutmx=\xi  \groutmx'=\xi$ for a probability vector $\xi$.

We say that $ \groutmx'$  is smaller than $\groutmx$ in the Peskun order,  $ \groutmx' \prec_P \groutmx$,
if for all $j, i\in \Fset$ with $i\neq j$  holds $ \groutmx'{(j,i)}\leq  \groutmx{(j,i)}$.
\end{defn}

Peskun used this order to compare reversible transition matrices with the
same stationary distribution  and their asymptotic variance. A useful interpretation is that
in case of $ \groutmx' \prec_P \groutmx$ a random walker under $\groutmx$ explores his state space faster than
a random walker under $\groutmx'$.

\begin{prop}\label{prop:RS-peskun-rsk-rrefl}
Let $X$ be irreducible with transition matrix $\groutmx$ on the finite state space $\Fset$ which is reversible for the stationary distribution  $\eta=(\eta_j:j\in \Fset)$, and  $\avvect=(\alpha_j:j\in \Fset)$ be
a non zero vector.\\
Denote by $\groutmx^{(\avvect)}_{skip}$ the modification of $\groutmx$ by randomized skipping
and by $\groutmx^{(\avvect)}_{refl}$ the modification of $\groutmx$ by randomized reflection.
Then it holds $\groutmx^{(\avvect)}_{refl} \prec_P \groutmx^{(\avvect)}_{skip}$.
\end{prop}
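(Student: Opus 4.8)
The plan is to reduce everything to an entrywise comparison of the two modified transition matrices off their diagonals, using the explicit formulas already available: \eqref{eq:RS-reflection-alpha1} for randomized reflection and the series form \eqref{eq:RS-r-alpha2} for randomized skipping.

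First I would check that the Peskun order is even applicable, i.e. that the two matrices share a common stationary distribution. By Theorem~\ref{thm:RS-eta-alpha} the skipping chain has the stationary distribution $\eta^{(\avvect)} = (C^{(\avvect)})^{-1}(\eta_j\alpha_j : j\in\Fset)$, and since $\groutmx$ is reversible, Proposition~\ref{prop:RS-reflection-eta-alpha} gives the very same $\eta^{(\avvect)}$ for the reflection chain. So $\groutmx^{(\avvect)}_{skip}$ and $\groutmx^{(\avvect)}_{refl}$ are two transition matrices on $\Fset$ with a common stationary vector, and Definition~\ref{def:RS-peskun} applies (the states in $B(\avvect)$ carry no stationary mass, so they cause no trouble).

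Next, fix $i,j\in\Fset$ with $i\neq j$. By \eqref{eq:RS-reflection-alpha1} the off-diagonal entry of the reflection matrix is simply $\groutmx^{(\avvect)}_{refl}(j,i)=\groutmx(j,i)\,\alpha_i$. For the skipping matrix I would expand \eqref{eq:RS-r-alpha2} as $\groutmx^{(\avvect)}_{skip}=\sum_{k\ge 0}\big(\groutmx I_{(1-\avvect)}\big)^k\groutmx I_\avvect$ (the series converges, as shown in the proof of Theorem~\ref{thm:RS-r-alpha}) and split off the $k=0$ term:
\[
\groutmx^{(\avvect)}_{skip}(j,i)=\big(\groutmx I_\avvect\big)_{ji}+\sum_{k\ge 1}\Big(\big(\groutmx I_{(1-\avvect)}\big)^k\groutmx I_\avvect\Big)_{ji}.
\]
The zeroth term equals $\groutmx(j,i)\,\alpha_i=\groutmx^{(\avvect)}_{refl}(j,i)$, and every summand with $k\ge1$ is an entry of a product of entrywise-nonnegative matrices, hence $\ge 0$. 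Therefore $\groutmx^{(\avvect)}_{refl}(j,i)\le\groutmx^{(\avvect)}_{skip}(j,i)$ for all $i\neq j$, which by Definition~\ref{def:RS-peskun} is exactly $\groutmx^{(\avvect)}_{refl}\prec_P\groutmx^{(\avvect)}_{skip}$.

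There is no serious obstacle: the argument is just ``the reflection entry is the leading term of the skipping series''. The only points needing a word of care are the two bookkeeping items above — legitimacy of the Peskun comparison (common stationary distribution, and the harmless role of $B(\avvect)$) and the restriction of Definition~\ref{def:RS-peskun} to off-diagonal entries, so that the diagonal terms (where reflection accumulates all rejected mass, and the comparison would go the other way) are irrelevant. The intuition behind the inequality is transparent: under reflection a rejected candidate returns the walker to its current state, whereas under skipping the walker keeps hopping and may still land on some state $i\neq j$, so skipping can only enlarge the off-diagonal mass leaving $j$.
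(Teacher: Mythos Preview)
Your proof is correct and takes essentially the same approach as the paper: compare off-diagonal entries and observe that the reflection entry $\groutmx(j,i)\alpha_i$ is the $k=0$ term of the skipping series \eqref{eq:RS-r-alpha2}, with all higher-order terms nonnegative. You are in fact slightly more careful than the paper, which omits the verification that both matrices share the stationary distribution $\eta^{(\avvect)}$ required by Definition~\ref{def:RS-peskun}.
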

\begin{proof}
The proof is obvious because for all $j, i\in \Fset$ with $i\neq j$  holds
$\groutmx^{(\avvect)}_{refl}(i,j) = \groutmx(i,j)\cdot \alpha_j$ and
$\groutmx^{(\avvect)}_{skip}(i,j) = \groutmx(i,j)\cdot \alpha_j + \text{possible further terms, which are all non-negative}$.
\end{proof}

Consequences of  $\groutmx^{(\avvect)}_{refl} \prec_P \groutmx^{(\avvect)}_{skip}$ in Proposition \ref{prop:RS-peskun-rsk-rrefl} are
\begin{enumerate}
\item the asymptotic variance in the central limit theorem for a Markov chain driven by
$\groutmx^{(\avvect)}_{skip}$ is smaller than for a Markov chain driven by  $\groutmx^{(\avvect)}_{refl}$
although both chains have the same limiting distribution (see \cite{tierney:98}).
\item
the spectral gap of a Markov chain driven by
$\groutmx^{(\avvect)}_{refl}$ is smaller than for a Markov chain driven by  $\groutmx^{(\avvect)}_{skip}$
although both chains have the same limiting distribution.
This means that the modification of $\X$ by randomized skipping converges faster to equilibrium
than the modification of $\X$ by randomized reflection.
\end{enumerate}

A further distinction between modifications of $\X$ by randomized skipping and by randomized reflection
is visible if we consider for a reversible Markov chain $\X$ with transition matrix $\groutmx$
the associated transition graph $G=(\Fset, {\cal E})$ where
$(i,j)\in {\cal E} \Leftrightarrow \groutmx(i,j)>0$.

Note that from reversibility follows $(i,j)\in {\cal E} \Leftrightarrow (j,i)\in {\cal E}$.
Denote by ${\cal N}(i) := \{j\in \Fset: (i,j)\in {\cal E}\}$ the one-step neighbourhood of $i$.

If we consider randomized skipping and by randomized reflection as algorithms to produce
modifications of the random walk $\X$,
then the algorithm for randomized reflection is local with respect to $G$ because  transitions out of
states $i$ are determined only on the basis of decisions  in ${\cal N}(i)$.

On the other side, randomized skipping is global with respect to $G$ because for a transition out of $i$
we possibly must consider random experiments anywhere on the graph.\\

To construct local algorithms for control, scheduling, and development of complex systems is important for key applications, e.g. wireless sensor networks or autonomous interacting robotic systems.

For mathematical investigations of this problem we refer to \cite{shah;shin:12}, where an
optimal local control algorithm for  wireless sensor networks is determined by local approximation
of an optimal global algorithm.

\section{Stochastic networks in a random environment}
\label{RS-sect:networks}

\subsection{Jackson networks}\label{sect:RS-jackson-networks}
We consider a Jackson network \cite{jackson:57} with node set
$\Jset:= \{1,\dots,J\}$. Customers arrive in independent external Poisson streams,
 at node $j$  with finite intensity $\lambda_j \geq 0,$ we set
$\lambda =\lambda_1+\ldots +\lambda_J> 0$. Customers are indistinguishable
and follow the same rules.
Customers request for service which is exponentially distributed with mean
$1$ at all nodes, all these requests constitute an independent family
of variables which are  independent of the arrival streams.

Nodes are exponential single servers with state
dependent service rates and with an infinite waiting room under
first--come--first--served (FCFS)
regime. If at node $j$ there are $n_j>0$ customers present, either
in service or waiting, then service is provided there with intensity
$\mu_j(n_j)>0$.
(Therefore, in general, the obtained service time is not
exponential-$1$.)

Routing is Markovian,
a customer departing from node $i$ immediately proceeds to node
$j$ with probability $\routmx{(i,j)}\geq 0$, and departs from the network
with probability $\routmx(j,0).$ Taking
$\routmx(0,j)=\lambda_j/\lambda,\
\routmx(0,0)=0$, we assume that the (extended) routing matrix
$\routmx=(\routmx(i,j):{i,j=0,\ldots,J})$ is irreducible. This ensures that
the traffic equations
\begin{equation} \label{eq:RS-traffic}
\eta_j=\lambda_j + \sum_{i=1}^J \eta_i \routmx(i,j),\quad\quad
j=1,\dots,J,
\end{equation}
have a unique solution which we denote by $\eta =
(\eta_j:j=1,\dots,J).$
We extend the traffic equation \eqref{eq:RS-traffic} to a steady state equation for
a routing Markov chain by
\begin{equation} \label{eq:RS-traffic-routing}
\eta_j=\sum_{i=0}^J \eta_i \routmx(i,j),\quad\quad
j=0,1,\dots,J,
\end{equation}
which is solved by $\eta = (\eta_j:j=0,1,\dots,J),$
where $\eta_0 := \lambda$ and the other $\eta_j$ are from \eqref{eq:RS-traffic}.\\
We use $\eta$ in both meanings and emphasize the latter one
by {\em extended traffic solution} $\eta$. $\eta$ is usually not a stochastic vector.
Let $\X=(X(t):t\geq 0)$ denote the vector process recording the
queue lengths in the network for time  $t$.
$X(t)=(X_1(t),\dots,X_J(t))\in \N^{\Jset}_0$ reads: at time $t$ there are
$X_j(t)$ customers present at node $j$, either in service or
waiting. The assumptions put on the system imply that $\X$ is a
strong Markov process on state space $\N^{\Jset}_0$  with generator
$Q^\X = (q^\X(\nvect,\nvect^{'}):\nvect,\nvect^{'}\in\mathbb{N}_{0}^{\Jset})$.
The strict positive transition rates of $Q^\X$ which describe the physical actions of the network
 are for $\mathbf{n}=(n_1,\dots,n_J)\in \mathbb{N}_{0}^{\Jset}$
\begin{align}\label{eq:RS-GB-JN}
q^\X(\nvect\qsep\nvect+\evect_{i})
& =  \lambda \routmx(0,i),
& i\in\Jset\,,\\
q^\X(\nvect\qsep\nvect-\evect_{j}+\evect_{i})
& = 1_{[n_{j}>0]}\mu_{j}(n_{j})\routmx(j,i),
& i\neq j,\, i, j\in\Jset \,, \nonumber\\
q^\X(\nvect\qsep\nvect-\evect_{j})
& = 1_{[n_{j}>0]}\mu_{j}(n_{j})\routmx(j,0),
& j\in\Jset\nonumber
\,.
\end{align}

For an ergodic network process $\X$ Jackson's theorem \cite{jackson:57} states that
the unique steady state and
limiting distribution $\xi$ on $ \N^{\Jset}_0$ is
\begin{equation}                        \label{eq:RS-jackson-steadystate}
\xi(\nvect)=\xi(n_1,\dots,n_J) = \prod_{j=1}^{J} \prod_{k=1}^{n_j} \frac{\eta_j}{\mu_j(k)} C(j)^{-1}
\end{equation}
with normalizing  constants $C(j)$ for the  marginal (over nodes) distributions of $\X$.\\

\subsection{Modified Jackson networks}
\label{sect:RS-modified-jackson-networks}
We consider in this section the problem to adjust  routing to
changes of  service capacities of the network and
assume that the routing regime is constructed to meet some optimization criterion with respect to the local queue lengths $X_j$ for the fixed $\mu_j(n_j),$ $j\in \Jset$ . This means that the
ratios $\eta_j/\mu_j(n_j)$ reflect a locally optimal behaviour of the nodes and should be
maintained when service capacities change.

\subsubsection{Degraded service and adapted routing}\label{sect:RS-degraded}

If the service intensities $\mu_{i}(n_{i})$  at node $i$ are degraded by a factor
$\srfactor_i\in [0,1]$ for $i\in \Jset$, we have to readjust the routing regime in a way to
meet these ratios again, at least approximately.
It is possible that some nodes can break down
completely, i.e. $\srfactor_\ell=0$ for such node $\ell$.
Clearly, the broken down nodes should not be visited any longer, and from the side constraint to
maintain  at least approximately the ratios ``overall arrival rate''/``service rates'', it is tempting
to try a rerouting by randomized skipping or  reflection to some suitably selected
``acceptance probability vector'' according to the $\avvect=\avvect(\srfactorvect)$ (If there is no ambiguity we will shortly write only $\avvect$.)\\
Then the new routing will have two components:

\begin{enumerate}
  \item Part of the total external arrival rate will be rejected, and
  \item the admitted load will be redistributed among the nodes which are not completely broken
  down in a way to meet exactly the old ratios.
\end{enumerate}

We define  $\alpha_0=1$ and take the given $\gamma_i$, $i\in \Jset,$ to define the vector
$\avvect(\srfactorvect)=(\alpha_i, i\in \Jset_0)$ of acceptance probabilities
as $\alpha_i := \gamma_i$ and modify the extended routing
matrix $\routmx=(\routmx(i,j):i,j\in \Jset_0)$ according to the randomized skipping, respectively
randomized reflection  principle from Sections \ref{sect:RS-skippingmatrix} and \ref{sect:RS-reflectionmatrix}.

It was surprising to us that we have to choose $\alpha_j=\srfactor_j$ to adjust the acceptance probability of  node $j$ precisely to the service rate factor
$\srfactor_j \in [0,1]$. This will no longer be possible if we later on will allow more general factors
$\srfactor_j \in [0,\infty)$, which may lead to acceptance probabilities $\alpha_j\neq \srfactor_j$.

The modified routing matrix is in both cases  (randomized skipping, resp.  reflection) denoted by
$\routmx^{(\avvect)}=(\routmx^{(\avvect)}(i,j):i,j\in \Jset_0)$, and is given alternatively
according to
\prettyref{thm:RS-r-alpha}, respectively \prettyref{prop:GS-y-from-eta}.
Note, that from $(\alpha_0=1)$ we obtain $\eta^{(\avvect)}(0) = \eta(0)$.

\begin{defn}\label{def:RS-JBA}
The set of ''blocked nodes''   $\JsetB{\srfactorvect}\subset \Jset$
is defined by
\[
j\in \JsetB{\srfactorvect}\Longleftrightarrow\srfactor_{j}=0\,,
\]
and its complement in $\Jset$, the set of the ''working nodes'' $\JsetW{\srfactorvect}\subset \Jset$, is defined by
\[
j\in \JsetW{\srfactorvect}\Longleftrightarrow\srfactor_{j}>0\,.
\]
\end{defn}

When the service rates of a Jackson network are modified according to $\srfactorvect$
and the routing is adjusted according to $\avvect(\srfactorvect)$
we obtain a new network process
$\X^{(\srfactor)}=(X^{(\srfactorvect)}(t):t\geq 0)$ as the vector process recording the
queue lengths in the network.
$X^{(\srfactorvect)}(t)=(X^{(\srfactorvect)}_1(t),\dots,X^{(\srfactorvect)}_J(t))\in \N^{\Jset}_0$ reads: at time $t$ there are
$X^{(\srfactorvect)}_j(t)$ customers present at node $j$, either in service or
waiting. The assumptions put on the system imply that $\X$ is a
strong Markov process on state space $\N^{\Jset}_0$  with generator
$Q^{\X^{(\srfactorvect)}} = : Q^{{(\srfactorvect)}} = (q^{(\srfactorvect)}(\mathbf{n},\mathbf{n}^{'}):\mathbf{n},\mathbf{n}^{'}\in\mathbb{N}_{0}^{\bar{J}})$.
The strict positive transition rates of $Q^{(\srfactorvect)}$ are
 under both rerouting regimes for $\mathbf{n}=(n_1,\dots,n_J)\in \mathbb{N}_{0}^{\bar{J}}$
\begin{align}\label{eq:RS-GB-JN-alpha}
q^{(\srfactorvect)}(\nvect\qsep\nvect+\evect_{i})
& = \lambda \routmx^{(\avvect)}(0,i),
& i\in\Jset_0\, ,\\
q^{(\srfactorvect)}(\nvect\qsep\nvect-\evect_{j}+\evect_{i})
& = 1_{[n_{j}>0]}\srfactor_j  \mu_{j}(n_{j})\routmx^{(\avvect)}(j,i)
& i,j \in\Jset,\, i\neq j \,,
\nonumber\\
q^{(\srfactorvect)}(\nvect\qsep\nvect-\evect_{j})
& = 1_{[n_{j}>0]}\srfactor_j \mu_{j}(n_{j})\routmx^{(\avvect)}(j,0),
& j\in\Jset\nonumber
\,.
\end{align}

That the construction is successful  in maintaining the ratios (overall arrival rate/service rates) shows the next theorems, which will be proved almost simultaneously. Recall $0/0:=0$.


\begin{theorem}
\label{thm:RS-JN-GB-skip-degrading}
{\sc[Modified Jackson networks: Randomized skipping and degrading nodes]}
Let $\X$ be an  ergodic  Jackson network process as described in Section \ref{sect:RS-jackson-networks}
with stationary distribution $\xi$ from \eqref{eq:RS-jackson-steadystate}.
If the service intensities $\mu_{i}(n_{i})$  at node $i$ are degraded by a factor
$\srfactor_i\in [0,1]$ for $i\in \Jset$, and routing is changed to $\routmx^{(\avvect)},$ with  $\avvect=\avvect(\srfactorvect)=(\alpha_i:i\in \Jset_0)$
where $\alpha_0=1$
and $\alpha_j = \srfactor_j$ for $j\in \Jset$,  by randomized skipping according
to Theorem \ref{thm:RS-r-alpha},  then
 $\xi$ is a stationary distribution for $\X^{(\srfactorvect)}=(X^{(\srfactorvect)}(t):t\geq 0)$ as well.\\
Moreover, if  $\JsetB{\srfactorvect} = \emptyset$, then $\X^{(\srfactorvect)}$ is ergodic.\\
If  $\JsetB{\srfactorvect}\neq \emptyset$, then $\X^{(\srfactorvect)}$ is not irreducible on
$\mathbb{N}_{0}^{\bar{J}}$ and its state space is divided into an infinite set of closed subspaces
\begin{equation*}
\mathbb{N}_{0}^{\JsetW{\srfactorvect}}\times \{(n_j:j\in\JsetB{\srfactorvect})\}\quad\forall
(n_j:j\in\JsetB{\srfactorvect})\in \mathbb{N}_{0}^{\JsetB{\srfactorvect}}\,.
\end{equation*}
For any probability distribution $\varphi$ on
$\mathbb{N}_{0}^{\JsetB{\srfactorvect}}$
there exists a stationary distribution $\xi^{(\srfactorvect)}_\varphi$ for $\X^{(\srfactorvect)}$, which is  for
$\nvect=(n_1,\dots,n_J)\in \mathbb{N}_{0}^{\Jset}$
\begin{equation*} 
\xi^{(\srfactorvect)}_\varphi(\nvect)=\xi^{(\srfactorvect)}_\varphi(n_1,\dots,n_J) =
\prod_{j\in \JsetW{\srfactorvect}} \prod_{k=1}^{n_j} \frac{\eta_j}{\mu_j(k)} C(j)^{-1}\cdot
\varphi(n_j:j\in\JsetB{\srfactorvect})\,.
\end{equation*}
\end{theorem}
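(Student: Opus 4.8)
The plan is to verify that $\xi$ (respectively $\xi^{(\srfactorvect)}_\varphi$) satisfies the global balance equations for the generator $Q^{(\srfactorvect)}$ in \eqref{eq:RS-GB-JN-alpha}, by exploiting the key invariance of the utilization ratios that the choice $\avvect=\avvect(\srfactorvect)$ is designed to produce. The crucial observation is that the rerouting matrix $\routmx^{(\avvect)}$ has extended traffic solution $\eta^{(\avvect)}$ given by \prettyref{thm:RS-eta-alpha} (or \prettyref{prop:GS-y-from-eta}), namely $\eta^{(\avvect)}(j) = \eta_j\alpha_j = \eta_j\srfactor_j$ for $j\in\Jset$ and $\eta^{(\avvect)}(0)=\eta(0)=\lambda$, up to a normalizing constant which I must track carefully since the traffic solution need not be a probability vector here. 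Concretely, if $\eta$ solves the extended routing equation \eqref{eq:RS-traffic-routing} for $\routmx$, then $x := \eta I_{\avvect}$ solves $x\routmx^{(\avvect)} = x$ by \prettyref{prop:GS-y-from-eta}; since $\alpha_0=1$ we keep $x_0 = \eta_0 = \lambda$, so the effective external arrival rates for the modified network are $\lambda\routmx^{(\avvect)}(0,i)$ and the new traffic solution $\eta^{(\srfactorvect)}$ of the modified Jackson network satisfies $\eta^{(\srfactorvect)}_j = \eta_j\srfactor_j$ for all $j\in\Jset$.

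With that in hand, the main computation is to check that $\xi$ as in \eqref{eq:RS-jackson-steadystate} is stationary for $Q^{(\srfactorvect)}$. The modified service rate at node $j$ with $n_j$ customers present is $\srfactor_j\mu_j(n_j)$, so by the standard Jackson-network stationarity argument the candidate product-form distribution is $\prod_{j}\prod_{k=1}^{n_j}\bigl(\eta^{(\srfactorvect)}_j/(\srfactor_j\mu_j(k))\bigr)$ up to normalization. But $\eta^{(\srfactorvect)}_j/(\srfactor_j\mu_j(k)) = (\eta_j\srfactor_j)/(\srfactor_j\mu_j(k)) = \eta_j/\mu_j(k)$ — the ratios are unchanged — so the candidate is exactly $\xi$. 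The verification of global balance then reduces to the classical Jackson computation: one checks the partial balance equations (rate out of node $j$ via departures or internal transfers equals rate in) using the traffic equations for $\routmx^{(\avvect)}$, i.e. $\eta^{(\srfactorvect)}_j = \lambda\routmx^{(\avvect)}(0,j) + \sum_i \eta^{(\srfactorvect)}_i\routmx^{(\avvect)}(i,j)$, and these hold by construction because $\eta^{(\srfactorvect)}=(\eta_0,\eta_1\srfactor_1,\dots,\eta_J\srfactor_J)$ is the traffic solution for $\routmx^{(\avvect)}$. A subtle point requiring care: when some $\srfactor_j=0$ the factor $\eta_j/\mu_j(k)$ in $\xi$ comes from the ``$0/0$'' convention, and one must check that the terms involving blocked nodes genuinely drop out of the balance equations because $\routmx^{(\avvect)}(\cdot,j)=0$ for $j\in\JsetB{\srfactorvect}$ (the $B(\avvect)$-columns of $\routmx^{(\avvect)}$ vanish, as noted after \prettyref{thm:RS-r-alpha}) and because $\srfactor_j\mu_j(n_j)=0$ there — so no customer is ever routed to a blocked node and no blocked node ever serves.

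For the structural claims: when $\JsetB{\srfactorvect}=\emptyset$, $\routmx^{(\avvect)}$ is irreducible on $\Jset_0$ (it is irreducible on $\Jset_0\setminus B(\avvect)=\Jset_0$ by \prettyref{thm:RS-r-alpha}), hence $\X^{(\srfactorvect)}$ is irreducible on $\N_0^{\Jset}$, and since $\xi$ is a summable stationary measure it is ergodic. When $\JsetB{\srfactorvect}\neq\emptyset$, the coordinates $(n_j:j\in\JsetB{\srfactorvect})$ can never change — no arrivals to blocked nodes, no service at blocked nodes — so they are frozen, which gives the decomposition of the state space into the closed sets $\N_0^{\JsetW{\srfactorvect}}\times\{(n_j:j\in\JsetB{\srfactorvect})\}$; on each such set the restricted process is an ordinary ergodic Jackson network over the working nodes with stationary distribution $\prod_{j\in\JsetW{\srfactorvect}}\prod_{k=1}^{n_j}\eta_j/(\mu_j(k))\,C(j)^{-1}$, and mixing these over $\varphi$ on the frozen coordinates yields $\xi^{(\srfactorvect)}_\varphi$. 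I expect the main obstacle to be not any single deep step but the bookkeeping: keeping straight that $\routmx^{(\avvect)}$ is stochastic on all of $\Jset_0$ (this is checked in the remark after \prettyref{thm:RS-r-alpha}), that the traffic solution is correctly identified with the right normalization given $\alpha_0=1$, and that the $0/0$ convention and the vanishing $B(\avvect)$-columns interact correctly so the blocked-node terms truly disappear from every balance equation rather than merely formally.
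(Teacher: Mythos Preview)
Your proposal is correct and follows essentially the same route as the paper: both identify $(\eta_j\alpha_j:j\in\Jset_0)$ as an invariant measure for $\routmx^{(\avvect)}$ via \prettyref{prop:GS-y-from-eta}, observe that the utilization ratios $\eta_j/\mu_j(k)$ are thereby preserved, and then verify that $\xi$ (resp.\ $\xi^{(\srfactorvect)}_\varphi$) satisfies the balance equations of $Q^{(\srfactorvect)}$, with the blocked-node terms dropping out because the $B(\avvect)$-columns of $\routmx^{(\avvect)}$ vanish and $\srfactor_j\mu_j(n_j)=0$ there. The only cosmetic difference is that the paper writes out the global balance equation explicitly and reduces it step by step to the traffic identity $\alpha_i\eta_i=\sum_{j}\alpha_j\eta_j\routmx^{(\avvect)}(j,i)$, whereas you invoke the classical Jackson partial-balance argument as a black box for the modified network; the substance is identical.
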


\begin{ex}\label{ex:RS-jackson-skipping-degrading}
Consider a Jackson network on a node set $\Jset=\{1,2,3,4\}$ with a routing matrix $\routmx$ as in  \prettyref{ex:RS-routing-skipping} with service rate  factors $\srfactor=(1,0.5,1,1)$. Then the availability vector is $\avvect=(1,1,0.5,1,1)$
and a new modified routing matrix  $\routmx^{(\avvect)}$ is
as in \prettyref{ex:RS-routing-skipping}.

See \prettyref{fig:RS-jackson-skipping-degrading}.
\begin{figure}[H]
\centering
\begin{subfigure}[b]{0.45\textwidth}
\begin{tikzpicture}

\newcommand{\yslant}{0.2} 
\newcommand{\xslant}{-0.5}
\newcommand{\height}{6}

\begin{scope}


\end{scope}

\begin{scope}
  \node()[] at (3.0,4) {Queueing network};
  \draw[dashed,rounded corners=1ex] (-2,-0.5) rectangle (5,4.5);

  \node(0-1-orgn)[] at (2.25,0.5) {};
  \node(0-2-orgn)[] at (4.5,2.5) {};
  \node(1-orgn)[] at (0,0.5) {};
  
  \node(1-Q-orgn)[] at (0.75,0.5) {};
  \node(2-orgn)[] at (2.5,2.5) {};
  \node(2-Q-orgn)[] at (1.75,2.5) {};
  \node(2-orgn-under)[] at (2.5,2) {};
  \node(2-orgn-1)[] at (3,2.5) {}; 
  \node(2-orgn-2)[] at (3,3) {};
  \node(2-orgn-3)[] at (0,3) {};
  \node(3-orgn)[] at (4.5,1) {};
  \node(3-orgn-over)[] at (4.5,4) {};
  \node(3-Q-orgn)[] at (3.75,1) {};
  \node(4-orgn)[] at (0,1.75) {};
  \node(4-Q-orgn)[] at (-0.75,1.75) {};
  \node(4-orgn-over)[] at (0,4) {};
  \node(4-orgn-under)[] at (-1.8,1) {};
  \node(5-orgn)[] at (0,1.75) {};

  \node(0-1)[draw, shape=circle, color=external.fg.color] at (0-1-orgn) {$0$};
  \node(0-2)[draw, shape=circle, color=external.fg.color] at (0-2-orgn) {$0$};
  \node(1)[draw, shape=circle] at (1-orgn) {$1$};
  \node(2-white)[draw, shape=circle, fill=white] at (2-orgn) {$2$};
  \node(2)[draw, shape=circle, fill=white] at (2-orgn) {$2$};

  \node(3)[draw, shape=circle] at (3-orgn) {$3$};
  \node(4)[draw, shape=circle] at (4-orgn) {$4$};

\node(1-Q)[draw, shape=queue, queue head=west, queue size=infinite,
minimum width =24 pt, minimum height=15pt] at(1-Q-orgn) {};
\node(2-Q)[draw, shape=queue, queue head=east, queue size=infinite,
minimum width =24 pt, minimum height=15pt] at(2-Q-orgn) {};
\node(3-Q)[draw, shape=queue, queue head=east, queue size=infinite,
minimum width =24 pt, minimum height=15pt] at(3-Q-orgn) {};
\node(4-Q)[draw, shape=queue, queue head=east, queue size=infinite,
minimum width =24 pt, minimum height=15pt] at(4-Q-orgn) {};

\draw[arrows={-latex}, thick,
  line width=\customerslinewidth] (1) -- (2-Q.west);
\draw[arrows={-latex}, thick,
  line width=\customerslinewidth] (2) -- (3-Q.west);
\draw[arrows={-latex}, thick,
  line width=\customerslinewidth] (3) -- (0-2);
\draw[arrows={-latex}, thick,
  line width=\customerslinewidth] (0-1) -- (1-Q.east);
\draw[arrows={-latex}, thick,
  line width=\customerslinewidth] (2.east) to [bend right=140] (4-Q.west);
\draw[arrows={-latex}, thick,
  line width=\customerslinewidth] (4.east) -- (3-Q.west);

\end{scope}

\end{tikzpicture}
\caption{original}
\end{subfigure}
\begin{subfigure}[b]{0.45\textwidth}
\begin{tikzpicture}

\newcommand{\yslant}{0.2} 
\newcommand{\xslant}{-0.5}
\newcommand{\height}{6}

\begin{scope}


\end{scope}

\begin{scope}
  \node()[] at (3.0,4) {Queueing network};
  \draw[dashed,rounded corners=1ex] (-2,-0.5) rectangle (5,4.5);

  \node(0-1-orgn)[] at (2.25,0.5) {};
  \node(0-2-orgn)[] at (4.5,2.5) {};
  \node(1-orgn)[] at (0,0.5) {};
  
  \node(1-Q-orgn)[] at (0.75,0.5) {};
  \node(2-orgn)[] at (2.5,2.5) {};
  \node(2-Q-orgn)[] at (1.75,2.5) {};
  \node(2-orgn-under)[] at (2.5,2) {};
  \node(2-orgn-1)[] at (3,2.5) {}; 
  \node(2-orgn-2)[] at (3,3) {};
  \node(2-orgn-3)[] at (0,3) {};
  \node(3-orgn)[] at (4.5,1) {};
  \node(3-Q-orgn)[] at (3.75,1) {};
  \node(4-orgn)[] at (0,1.75) {};
  \node(4-Q-orgn)[] at (-0.75,1.75) {};
  \node(4-orgn-under)[] at (-1.8,1) {};
  \node(5-orgn)[] at (0,1.75) {};

  \node(0-1)[draw, shape=circle, color=external.fg.color] at (0-1-orgn) {$0$};
  \node(0-2)[draw, shape=circle, color=external.fg.color] at (0-2-orgn) {$0$};
  \node(1)[draw, shape=circle] at (1-orgn) {$1$};
  \node(2)[draw, shape=circle, fill=down.partially.bg.color] at (2-orgn) {$2$};
  \node(2)[draw, shape=circle] at (2-orgn) {$2$};
  \node(3)[draw, shape=circle] at (3-orgn) {$3$};
  \node(4)[draw, shape=circle] at (4-orgn) {$4$};

\node(1-Q)[draw, shape=queue, queue head=west, queue size=infinite,
minimum width =24 pt, minimum height=15pt] at(1-Q-orgn) {};
\node(2-Q)[draw, shape=queue, queue head=east, queue size=infinite,
minimum width =24 pt, minimum height=15pt] at(2-Q-orgn) {};
\node(3-Q)[draw, shape=queue, queue head=east, queue size=infinite,
minimum width =24 pt, minimum height=15pt] at(3-Q-orgn) {};
\node(4-Q)[draw, shape=queue, queue head=east, queue size=infinite,
minimum width =24 pt, minimum height=15pt] at(4-Q-orgn) {};

\draw[arrows={-latex}, thick,
  line width=\customerslinewidth] (1) -- (2-Q.west);
\draw[arrows={-latex}, thick,
  line width=\customerslinewidth] (2) -- (3-Q.west);
\draw[arrows={-latex}, thick,
  line width=\customerslinewidth] (3) -- (0-2);
\draw[arrows={-latex}, thick,
  line width=\customerslinewidth] (0-1) -- (1-Q.east);
\draw[arrows={-latex}, thick,
  line width=\customerslinewidth] (2.east) to [bend right=140] (4-Q.west);
\draw[arrows={-latex}, thick,
  line width=\customerslinewidth] (4.east) -- (3-Q.west);

\draw[arrows={-latex}, thick, line width=\customerslinewidth,color=blue] (1) .. controls (2-orgn-under) .. (3-Q.west);

\draw[arrows={-latex}, thick, line width=\customerslinewidth,
color=blue,bend right=40] (1) .. controls (4-orgn-under) ..(4-Q.west);

\end{scope}

\end{tikzpicture}
\caption{modified}
\end{subfigure}
\caption{Original and modified  according to the skipping rule Jackson networks from \prettyref{ex:RS-jackson-skipping-degrading}.
\label{fig:RS-jackson-skipping-degrading}
}
\end{figure}
\end{ex}

\begin{theorem}
\label{thm:RS-JN-GB-refl-degrading}
{\sc [Modified Jackson networks: Randomized reflection
and degrading nodes]}
Let $\X$ be an  ergodic  Jackson network process as described in Section \ref{sect:RS-jackson-networks}
with reversible routing matrix $r$ and
with stationary distribution $\xi$ from \eqref{eq:RS-jackson-steadystate}.
If the service intensities $\mu_{i}(n_{i})$  at node $i$ are degraded by a factor
$\srfactor_i\in [0,1]$ for $i\in \Jset$, and routing is changed to $\routmx^{(\avvect)}$, with  $\avvect=\avvect{\srfactorvect}=(\alpha_i:i\in \Jset_0)$ where $\alpha_0=1$ and
$\srfactor_j = \alpha_j$ for $j\in \Jset$,
by randomized reflection according
to Proposition \ref{prop:RS-reflection-eta-alpha},  then
 $\xi$ is a stationary distribution for $\X^{(\srfactorvect)}=(X^{(\srfactorvect)}(t):t\geq 0)$ as well.\\
Moreover, if $\JsetB{\srfactorvect} = \emptyset$, then $\X^{(\srfactorvect)}$ is ergodic.\\
If $\JsetB{\srfactorvect}\neq \emptyset$, then $\X^{(\srfactorvect)}$ is not irreducible on
$\mathbb{N}_{0}^{\Jset}$ and its state space is divided into an infinite set of closed subspaces
\begin{equation*}
\mathbb{N}_{0}^{\JsetW{\srfactorvect}}\times \{(n_j:j\in\JsetB{\srfactorvect})\}\quad\forall
(n_j:j\in\JsetB{\srfactorvect})\in \mathbb{N}_{0}^{\JsetB{\srfactorvect}}\,.
\end{equation*}
For any probability distribution $\varphi$ on
$\mathbb{N}_{0}^{\JsetB{\srfactorvect}}$
there exists a stationary distribution $\xi^{(\srfactorvect)}_\varphi$ for $\X^{(\srfactorvect)}$, which is  for
$\nvect=(n_1,\dots,n_J)\in \mathbb{N}_{0}^{\bar{J}}$
\begin{equation*} 
\xi^{(\srfactorvect)}_\varphi(\mathbf{n})=\xi^{(\srfactorvect)}_\varphi(n_1,\dots,n_J) =
\prod_{j\in \JsetW{\srfactorvect}} \prod_{k=1}^{n_j} \frac{\eta_j}{\mu_j(k)} C(j)^{-1}\cdot
\varphi(n_j:j\in\JsetB{\srfactorvect})\,.
\end{equation*}
\end{theorem}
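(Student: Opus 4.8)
The plan is to recognise the modified network process $\X^{(\srfactorvect)}$ --- on the whole state space $\mathbb{N}_{0}^{\Jset}$ when $\JsetB{\srfactorvect}=\emptyset$, and on each of its closed subspaces otherwise --- as an ordinary Jackson network of the kind set up in \prettyref{sect:RS-jackson-networks}, to identify its extended traffic solution, and then to read off the product‑form stationary distribution from \eqref{eq:RS-jackson-steadystate}. The mechanism behind the result is the cancellation $\eta_j\srfactor_j/(\srfactor_j\mu_j(k))=\eta_j/\mu_j(k)$: the factor $\srfactor_j$ enters both the throughput of node $j$ in the modified network and its service intensity, and therefore drops out of the local load factors, so that $\xi$ from \eqref{eq:RS-jackson-steadystate}, \emph{with the same per‑node normalising constants $C(j)$}, remains stationary.

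The first step is to pin down the traffic solution of the modified network. Taking $\avvect=\avvect(\srfactorvect)$ with $\alpha_0=1$ and $\alpha_j=\srfactor_j$, and using that $\routmx$ is reversible for $\eta$, \prettyref{prop:RS-reflection-eta-alpha} applies to the extended routing chain and shows that $\routmx^{(\avvect)}$ is reversible with stationary measure proportional to $(\eta_j\alpha_j:j\in\Jset_0)$. Hence the unnormalised vector $\tilde\eta:=(\eta_j\srfactor_j:j\in\Jset_0)$, for which $\tilde\eta_0=\eta_0\alpha_0=\lambda$, solves $\tilde\eta=\tilde\eta\,\routmx^{(\avvect)}$; equivalently, $\tilde\eta$ solves the extended traffic equation \eqref{eq:RS-traffic-routing} for the Jackson network with external arrival intensities $\lambda\routmx^{(\avvect)}(0,j)$, service intensities $\srfactor_j\mu_j(\cdot)$ and Markovian routing $\routmx^{(\avvect)}$, i.e.\ precisely the network whose generator is $Q^{(\srfactorvect)}$ in \eqref{eq:RS-GB-JN-alpha}. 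This is the one place where reversibility of $\routmx$ is genuinely used: for randomized reflection there is no tractable stationary law, and hence no usable traffic solution, without it (cf.\ the remark in \prettyref{sect:RS-general-sampling}).

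Given $\tilde\eta$, I would then distinguish the two cases. If $\JsetB{\srfactorvect}=\emptyset$, every $\srfactor_j>0$, so by \eqref{eq:RS-reflection-alpha1} the off‑diagonal support of $\routmx^{(\avvect)}$ on $\Jset_0$ coincides with that of $\routmx$; thus $\routmx^{(\avvect)}$ is irreducible on $\Jset_0$ and $\X^{(\srfactorvect)}$ is irreducible on $\mathbb{N}_{0}^{\Jset}$. Applying \eqref{eq:RS-jackson-steadystate} to this network with load factors $\tilde\eta_j/(\srfactor_j\mu_j(k))=\eta_j/\mu_j(k)$ gives stationary distribution $\prod_{j\in\Jset}\prod_{k=1}^{n_j}\frac{\eta_j}{\mu_j(k)}\,C(j)^{-1}=\xi(\nvect)$; as $\xi$ is a probability distribution ($\X$ being ergodic), $\X^{(\srfactorvect)}$ is positive recurrent, hence ergodic. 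If $\JsetB{\srfactorvect}\neq\emptyset$, then $\alpha_j=\srfactor_j=0$ for $j\in\JsetB{\srfactorvect}$, and inspecting \eqref{eq:RS-GB-JN-alpha} together with \eqref{eq:RS-reflection-alpha1} shows that no transition of $Q^{(\srfactorvect)}$ changes the coordinate $n_j$ for such $j$ (no external arrival to $j$, no routing into $j$, no service at $j$). Hence $\mathbb{N}_{0}^{\Jset}$ splits into the closed subspaces $\mathbb{N}_{0}^{\JsetW{\srfactorvect}}\times\{(n_j:j\in\JsetB{\srfactorvect})\}$ of the statement; on each of them $Q^{(\srfactorvect)}$ restricts to the generator of a Jackson network on node set $\JsetW{\srfactorvect}$, whose traffic solution is $(\tilde\eta_j:j\in\JsetW{\srfactorvect}\cup\{0\})$ because $\tilde\eta_i=0$ for $i\in\JsetB{\srfactorvect}$ and $\tilde\eta=\tilde\eta\,\routmx^{(\avvect)}$ restricts accordingly. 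Then \eqref{eq:RS-jackson-steadystate} yields the factor $\prod_{j\in\JsetW{\srfactorvect}}\prod_{k=1}^{n_j}\frac{\eta_j}{\mu_j(k)}\,C(j)^{-1}$, and tensoring with an arbitrary probability distribution $\varphi$ on the frozen coordinates $\mathbb{N}_{0}^{\JsetB{\srfactorvect}}$ gives the stationary distribution $\xi^{(\srfactorvect)}_\varphi$. If one prefers not to invoke Jackson's theorem on a possibly reducible restricted chain, exactly the same conclusions follow by checking directly that $\xi$, resp.\ $\xi^{(\srfactorvect)}_\varphi$, satisfies the global balance equations $\xi\,Q^{(\srfactorvect)}=0$ using only $\tilde\eta=\tilde\eta\,\routmx^{(\avvect)}$; this is the routine Jackson‑network partial‑balance computation and runs in parallel to the proof of \prettyref{thm:RS-JN-GB-skip-degrading}.

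The step I expect to be delicate is not a calculation but the identification: realising that the correct acceptance vector is $\alpha_j=\srfactor_j$ and that with this choice the throughput of node $j$ becomes $\eta_j\srfactor_j$, so that the $\srfactor_j$ cancel against the service rates and both the product form \emph{and} its normalising constants are inherited unchanged. The remaining work --- irreducibility when $\JsetB{\srfactorvect}=\emptyset$, and the decomposition into closed subspaces with the restricted traffic equation still solved by the restriction of $\tilde\eta$ when $\JsetB{\srfactorvect}\neq\emptyset$ --- is bookkeeping, and the reversibility hypothesis on $\routmx$ enters only through \prettyref{prop:RS-reflection-eta-alpha}.
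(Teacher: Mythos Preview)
Your proposal is correct and the underlying mechanism you isolate --- that $(\eta_j\alpha_j:j\in\Jset_0)$ is invariant for $\routmx^{(\avvect)}$ via \prettyref{prop:RS-reflection-eta-alpha}, so that $\tilde\eta_j/(\srfactor_j\mu_j(k))=\eta_j/\mu_j(k)$ and the product form survives unchanged --- is exactly what drives the paper's argument too. The difference is in packaging: the paper does not invoke Jackson's theorem for the modified network but instead writes out the global balance equation for $Q^{(\srfactorvect)}$ explicitly, substitutes the candidate $\xi^{(\srfactorvect)}_\varphi$, and reduces everything algebraically to the single relation $(\alpha_j\eta_j)\cdot\routmx^{(\avvect)}=(\alpha_j\eta_j)$, at which point \prettyref{prop:RS-reflection-eta-alpha} (resp.\ \prettyref{thm:RS-eta-alpha} in the skipping case) closes the proof. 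Your ``recognise it as a Jackson network and cite \eqref{eq:RS-jackson-steadystate}'' is cleaner when it applies, but you are right to flag the caveat: under randomized reflection $\routmx^{(\avvect)}$ can be reducible even on $\JsetW{\srfactorvect}\cup\{0\}$ (the paper notes this after \eqref{eq:RS-reflection-alpha1}), so Jackson's theorem as stated in \prettyref{sect:RS-jackson-networks} is not directly available in general. Your fallback --- verify $\xi^{(\srfactorvect)}_\varphi Q^{(\srfactorvect)}=0$ directly using only $\tilde\eta=\tilde\eta\,\routmx^{(\avvect)}$ --- is precisely the paper's route, and it handles the reducible case without further case analysis.
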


\begin{proof}[Proof of \prettyref{thm:RS-JN-GB-skip-degrading} and
\prettyref{thm:RS-JN-GB-refl-degrading}]
The global balance equation ${x}\cdot Q^{(\srfactorvect)}=0$ for the joint queue length process
$X^{(\srfactorvect)}$ of the modified system is in both settings for $\mathbf{n}=(n_1,\dots,n_J) \in \N_0^{\Jset}$
\begin{eqnarray}
&  & x(\nvect)\left(\sum_{j\in\Jset}\lambda \routmx^{(\avvect)}(0,j)+\sum_{j\in\Jset}1_{[n_{j}>0]}\srfactor_j\mu_{j}(n_{j})
(1-\routmx^{(\avvect)}(j,j))\right)\nonumber \\
 & = & \sum_{i\in\Jset} x(\nvect-\evect_{i})1_{[n_{i}>0]}\lambda\routmx^{(\avvect)}(0,i)
  +\sum_{j\in\Jset}
  \sum_{i\in\Jset\backslash\{j\}}x(\nvect-\evect_{i}+\evect_{j})1_{[n_{i}>0]}
  \srfactor_j\mu_{j}(n_{j}+1)\routmx^{(\avvect)}(j,i)\nonumber \\
 &  & +\sum_{j\in\Jset}x(\nvect+\evect_{j})\srfactor_j\mu_{j}(n_{j}+1)
 \routmx^{(\avvect)}(j,0)\,.\label{eq:RS-LS-RAND-X-alpha-gbe}
\end{eqnarray}

We first consider  the case $\JsetB{\srfactorvect}\neq \emptyset$. Then for $i\in \JsetB{\srfactorvect}$
we have $\srfactor_i=\alpha_i=0$ and $\routmx^{(\avvect)}(j,i) =0$ for all $j\in\Jset_0$  and \eqref{eq:RS-LS-RAND-X-alpha-gbe} reduces to
\begin{eqnarray}
&  & x(\nvect)\left(\sum_{j\in\JsetW{\srfactorvect}} \lambda \routmx^{(\avvect)}(0,j)
+\sum_{j\in\JsetW{\srfactorvect}}1_{[n_{j}>0]}\srfactor_j\mu_{j}(n_{j})
(1-\routmx^{(\avvect)}(j,j))\right)\nonumber \\
 & = & \sum_{i\in\JsetW{\srfactorvect}} x(\nvect-\evect_{i})1_{[n_{i}>0]}\lambda\routmx^{(\avvect)}(0,i) \nonumber \\
& &  +\sum_{j\in\JsetW{\srfactorvect}}\sum_{i\in\JsetW{\srfactorvect}
 \backslash\{j\}}x(\nvect-\evect_{i}+\evect_{j})1_{[n_{i}>0]}\srfactor_j
 \mu_{j}(n_{j}+1)\routmx^{(\avvect)}(j,i)\nonumber \\
 &  & +\sum_{j\in\JsetW{\srfactorvect}}x(\nvect+\evect_{j})
 \srfactor_j\mu_{j}(n_{j}+1)
 \routmx^{(\avvect)}(j,0)\,.\label{eq:RS-LS-RAND-X-alpha-gbe-reduced1}
\end{eqnarray}
Inserting $x(n_1,\dots,n_J) = \prod_{j\in \JsetW{\srfactorvect}} \prod_{k=1}^{n_j} \frac{\eta_j}{\mu_j(k)} C(j)^{-1}\cdot
\varphi(n_j:j\in\JsetB{\srfactorvect})$ for any probability density $\varphi$ on
$\mathbb{N}_{0}^{\JsetB{\srfactorvect}}$ we see that immediately
$\prod_{j\in \JsetW{\srfactorvect}} C(j)^{-1}\cdot \varphi(n_j:j\in\JsetB{\srfactorvect})$
cancels. Multiplication with $\left(\prod_{j\in \JsetW{\srfactorvect}} \prod_{k=1}^{n_j} \frac{\eta_j}{\mu_j(k)}\right)^{-1}$ yields

\begin{eqnarray*}
&&\left(\sum_{j\in\JsetW{\srfactorvect}}\lambda \routmx^{(\avvect)}(0,j)
+\sum_{j\in\JsetW{\srfactorvect}}1_{[n_{j}>0]}\srfactor_j\mu_{j}(n_{j})
(1-\routmx^{(\avvect)}(j,j))\right)\nonumber \\
 & = & \sum_{i\in\JsetW{\srfactorvect}}
 \frac{\mu_i(n_i)}{\eta_i} 1_{[n_{i}>0]}\lambda\routmx^{(\avvect)}(0,i)\nonumber \\
 && +\sum_{j\in\JsetW{\srfactorvect}}
 \sum_{i\in\JsetW{\srfactorvect}
 \backslash\{j\}}\frac{\mu_i(n_i)}{\eta_i} 1_{[n_{i}>0]} \frac{\eta_j}{\mu_j(n_j+1)}
 \srfactor_j\mu_{j}(n_{j}+1)\routmx^{(\avvect)}(j,i)\nonumber \\
 &  & +\sum_{j\in\JsetW{\srfactorvect}} \frac{\eta_j}{\mu_j(n_j+1)}\srfactor_j\mu_{j}(n_{j}+1)
 \routmx^{(\avvect)}(j,0)\,.
\end{eqnarray*}
We replace $\srfactor_j$ by $\alpha_j$ which are equal for all $j \in \Jset$ and get
\begin{eqnarray}
&&\left(\sum_{j\in\JsetW{\srfactorvect}}\lambda \routmx^{(\avvect)}(0,j)
+\sum_{j\in\JsetW{\srfactorvect}}1_{[n_{j}>0]}\alpha_j\mu_{j}(n_{j})
(1-\routmx^{(\avvect)}(j,j))\right)\nonumber \\
 & = & \sum_{i\in\JsetW{\srfactorvect}}
 \frac{\mu_i(n_i)}{\eta_i} 1_{[n_{i}>0]}\lambda\routmx^{(\avvect)}(0,i)\nonumber \\
 && +\sum_{j\in\JsetW{\srfactorvect}}
 \sum_{i\in\JsetW{\srfactorvect}
 \backslash\{j\}}\frac{\mu_i(n_i)}{\eta_i} 1_{[n_{i}>0]} \frac{\eta_j}{\mu_j(n_j+1)}
 \alpha_j\mu_{j}(n_{j}+1)\routmx^{(\avvect)}(j,i)\nonumber \\
 &  & +\sum_{j\in\JsetW{\srfactorvect}} \frac{\eta_j}{\mu_j(n_j+1)}\alpha_j\mu_{j}(n_{j}+1)
 \routmx^{(\avvect)}(j,0)\,.\label{eq:RS-LS-RAND-X-alpha-gbe-reduced2}
\end{eqnarray}
Reordering and canceling this yields
\begin{eqnarray}
&&\left(\sum_{j\in\JsetW{\srfactorvect}}\lambda \routmx^{(\avvect)}(0,j)+\sum_{j\in\JsetW{\srfactorvect}}1_{[n_{j}>0]}
\alpha_j\mu_{j}(n_{j})
\right)\nonumber \\
 & = & \sum_{i\in\JsetW{\srfactorvect}}
 \frac{\mu_i(n_i)}{\eta_i} 1_{[n_{i}>0]}\lambda\routmx^{(\avvect)}(0,i)\nonumber \\
&&  +\sum_{j\in\JsetW{\srfactorvect}}
\sum_{i\in\JsetW{\srfactorvect}} \frac{\mu_i(n_i)}{\eta_i} 1_{[n_{i}>0]} \frac{\eta_j}{\mu_j(n_j+1)}
\alpha_j
 \mu_{j}(n_{j}+1)\routmx^{(\avvect)}(j,i)\nonumber \\
 &  & +\sum_{j\in\JsetW{\srfactorvect}} \frac{\eta_j}{\mu_j(n_j+1)}\alpha_j\mu_{j}(n_{j}+1)
 \routmx^{(\avvect)}(j,0)\,,\label{eq:RS-LS-RAND-X-alpha-gbe-reduced3}
\end{eqnarray}
and so
\begin{eqnarray}
&&\left(\sum_{j\in\JsetW{\srfactorvect}}\lambda \routmx^{(\avvect)}(0,j)
+\sum_{j\in\JsetW{\srfactorvect}}1_{[n_{j}>0]}
\alpha_j\mu_{j}(n_{j})
\right) \label{eq:RS-LS-RAND-X-alpha-gbe-reduced4}\\
 & = & \sum_{i\in\JsetW{\srfactorvect}}
 \frac{\mu_i(n_i)}{\eta_i} 1_{[n_{i}>0]}\lambda\routmx^{(\avvect)}(0,i)
  +\sum_{j\in\JsetW{\srfactorvect}}
  \sum_{i\in\JsetW{\srfactorvect}}
  \frac{\mu_i(n_i)}{\eta_i} 1_{[n_{i}>0]} \alpha_j{\eta_j}
 \routmx^{(\avvect)}(j,i)\nonumber \\
&& +\sum_{j\in\JsetW{\srfactorvect}} {\eta_j}\alpha_j
 \routmx^{(\avvect)}(j,0)\,.\nonumber
\end{eqnarray}
The first term on the left side and the last term on the right side equate
\begin{eqnarray*}
&&\sum_{j\in\JsetW{\srfactorvect}}\lambda \routmx^{(\avvect)}(0,j) = \lambda(1- \routmx^{(\avvect)}(0,0))\,,\quad \text{and}\\
&&
\underbrace{
\left(\sum_{j\in\JsetW{\srfactorvect}} {\eta_j}\alpha_j
 \routmx^{(\avvect)}(j,0) + \eta_0\alpha_0\routmx^{(\avvect)}(0,0)\right)}_{=\eta_0\alpha_0}
  - \eta_0\alpha_0\routmx^{(\avvect)}(0,0)=\lambda(1-\routmx^{(\avvect)}(0,0))\,,
\end{eqnarray*}
where we used $\eta_0 =\lambda$, $\alpha_0=1$,
and that $(\eta_j\alpha_j:j\in\Jset_0)$ is an invariant
measure for $\routmx^{(\avvect)}$.
So \eqref{eq:RS-LS-RAND-X-alpha-gbe-reduced4} reduces to
\begin{eqnarray}
&&\sum_{j\in\JsetW{\srfactorvect}}
1_{[n_{j}>0]}\alpha_j\mu_{j}(n_{j})
 \label{eq:RS-LS-RAND-X-alpha-gbe-reduced5}\\
 & = & \sum_{i\in\JsetW{\srfactorvect}}
 \frac{\mu_i(n_i)}{\eta_i} 1_{[n_{i}>0]}\lambda\routmx^{(\avvect)}(0,i)
  +\sum_{j\in\JsetW{\srfactorvect}}\sum_{i\in\JsetW{\srfactorvect}} \frac{\mu_i(n_i)}{\eta_i} 1_{[n_{i}>0]} \alpha_j{\eta_j}
 \routmx^{(\avvect)}(j,i)\,.\nonumber
\end{eqnarray}
Take any $i\in\JsetW{\srfactorvect}$ with ${n_{i}>0}$ and consider the summands with this $i$:
\begin{eqnarray*}
&&\alpha_i1_{[n_{i}>0]}\mu_{i}(n_{i})
 =  \frac{\mu_i(n_i)}{\eta_i} 1_{[n_{i}>0]}\lambda\routmx^{(\avvect)}(0,i)
  +\sum_{j\in\JsetW{\srfactorvect}} \frac{\mu_i(n_i)}{\eta_i} 1_{[n_{i}>0]} \alpha_j{\eta_j}
 \routmx^{(\avvect)}(j,i)\,,\nonumber
\end{eqnarray*}
which is
\begin{eqnarray*}
\alpha_i {\eta_i}
 &=&  \lambda \routmx^{(\avvect)}(0,i)
  +\sum_{j\in\JsetW{\srfactorvect}}  \alpha_j{\eta_j}
 \routmx^{(\avvect)}(j,i)\,,\nonumber
\end{eqnarray*}
and recalling $\eta_0 =\lambda$,
$\alpha_0=1$ and $\alpha_j=0$ for ${j\in\JsetB{\srfactorvect}}$ this is
\begin{eqnarray} \label{eq:RS-LS-RAND-X-alpha-gbe-reduced6}
&&\alpha_i {\eta_i}
 = \sum_{j\in\JsetW{\srfactorvect}\cup \{0\}}  \alpha_j{\eta_j}
 \routmx^{(\avvect)}(j,i)\,.
\end{eqnarray}
Note, that any $i$ will occur in such a procedure for
some suitable state vector with $n_i>0$.
Therefore, if \eqref{eq:RS-LS-RAND-X-alpha-gbe-reduced6} would be true, we would
eventually arrive at
\begin{eqnarray*}
&&(\alpha_j{\eta_j}:j\in\Jset_0)\cdot \routmx^{(\avvect)}
=(\alpha_j{\eta_j}:j\in\Jset_0)\,.\nonumber
\end{eqnarray*}
Now \eqref{eq:RS-LS-RAND-X-alpha-gbe-reduced6} is true for the setting of \prettyref{thm:RS-JN-GB-skip-degrading}
by \prettyref{thm:RS-r-alpha} and for the setting of
\prettyref{thm:RS-JN-GB-refl-degrading}
by Proposition \ref{prop:RS-reflection-eta-alpha}, which finishes the parts
for $\JsetB{\srfactorvect}\neq \emptyset$  of the proofs.

The case $\JsetB{\srfactorvect}=\emptyset$ is proved similarly.
\end{proof}



\subsubsection{Upgraded and/or degraded service and adapted routing}
\label{sect:RS-upgraded}
We allow now more general changes of service rates by non-negative factors
$\srfactor_j \in [0,\infty)$. This means that we either speed up service at node $j$, if
$\srfactor_j >1$ or have a degraded server at node $j$, if $\srfactor_j <1$.
The case $\srfactor_j \leq 1$  $\forall j\in\Jset$ was considered in the previous section.
When at least one service rate increases, i.e. when
$||\srfactorvect||_{\infty}>1$, we introduce a new mechanism to adapt the network's load and
routing.
In this case we increase the total network input by a factor $\tnfactor=||\srfactorvect||_{\infty}>1$
and choose as acceptance probability vector $\avvect$ the relative
service rate changes $\alpha_j=\frac{\srfactor_j}{||\srfactorvect||_{\infty}}$.

 We will proceed  with the general case $\srfactor_j \in [0,\infty)$
in a way that the previous case is covered by our general formalism.

Remark: If $\gamma_j>1$,  node $j$ can  process more load without being overloaded, 
which is easily seen
by considering a single $M/M/1/\infty$ queue. In a network however, this additional load departing from $j$ can cause overloaded other nodes. It will be therefore possible that some of the offered
new total input of rate $\beta\cdot \lambda$  will not be accepted after readjusting the routing.
Our randomized random walk algorithms form Section \ref{sect:RS-randomwalks} will automatically 
compute the rejection rates for the external arrivals at all nodes.

The new network process will be denoted as in Section \ref{sect:RS-degraded} as
$\X^{(\srfactor)}=(X^{(\srfactorvect)}(t):t\geq 0)$, the vector process recording the
queue lengths in the network.
$X^{(\srfactorvect)}_t=(X^{(\srfactorvect)}_1(t),\dots,X^{(\srfactorvect)}_J(t))\in \N^{\Jset}_0$ reads: at time $t$ there are
$X^{(\srfactorvect)}_j(t)$ customers present at node $j$, either in service or
waiting. The assumptions put on the system imply that $\X$ is a
strong Markov process on state space $\N^{\Jset}_0$  with generator
$Q^{\X^{(\srfactorvect)}} = : Q^{{(\srfactorvect)}} = (q^{(\srfactorvect)}(\mathbf{n},\mathbf{n}^{'}):\mathbf{n},\mathbf{n}^{'}\in\mathbb{N}_{0}^{\bar{J}})$.
The strict positive transition rates of $Q^{(\srfactorvect)}$ are
 under both rerouting regimes for $\mathbf{n}=(n_1,\dots,n_J)\in \mathbb{N}_{0}^{\bar{J}}$

The generator $Q^{(\srfactorvect)}$ of this process is
\begin{align}\label{eq:RS-GB-JN-general}
q^{(\srfactorvect)}(\mathbf{n}\qsep\mathbf{n}+\evect_{i}) &
 = \beta \lambda \routmx^{(\avvect)}(0,i),
 &i\in\Jset_0\, ,\\
q^{(\srfactorvect)}(\mathbf{n}\qsep\mathbf{n}-\evect_{j}+\evect_{i}) &
=  1_{[n_{j}>0]}\srfactor_j  \mu_{j}(n_{j})\routmx^{(\avvect)}(j,i)
& i,j \in\Jset,\, i\neq j,
\nonumber\\
q^{(\srfactorvect)}(\mathbf{n}\qsep\mathbf{n}-\evect_{j}) &
=  1_{[n_{j}>0]}\srfactor_j \mu_{j}(n_{j})\routmx^{(\avvect)}(j,0),
& j\in\Jset\,.\nonumber
\end{align}

\begin{theorem}
\label{thm:RS-JN-GB-skip-general}
{\sc [Modified Jackson networks: Randomized skipping and general change of service]}
Let $\X$ be an  ergodic  Jackson network process as described in Section \ref{sect:RS-jackson-networks}
with stationary distribution $\xi$ from \eqref{eq:RS-jackson-steadystate},
where the service intensities $\mu_{i}(n_{i})$  at node $i$ are changed by a factor
$\srfactor_i\in [0,\infty)$ for $i\in \Jset$.
Denote
\begin{eqnarray}
	\alpha_0&=&1\,, \nonumber \\
	\alpha_j &=&
	\begin{cases}
	\srfactor_j&
	 \qquad \text{\upshape if}\quad ||\srfactorvect||_{\infty} \leq 1 \,,\\
	\frac{\srfactor_j}{||\srfactorvect||_{\infty}} &
	 \qquad \text{\upshape if}\quad ||\srfactorvect||_{\infty} > 1 \,,
	\end{cases}
	\qquad \forall j \in \Jset \,,
	\label{eq:RS-general-availability}
\end{eqnarray}
and
\begin{equation}
\label{eq:RS-total-network-input-factor}
	\tnfactor :=
	\begin{cases}
	1
	& \qquad \text{\upshape if}\quad ||\srfactor||_{\infty} \leq 1 \,,\\
	||\srfactor||_{\infty}
	& \qquad \text{\upshape if}\quad ||\srfactor||_{\infty} > 1 \,,
	\end{cases}
\end{equation}
and change routing by randomized skipping with  $\avvect=(\alpha_i:i\in \Jset_0)$  according
to \prettyref{thm:RS-r-alpha}, and change the total network input  by factor $\tnfactor$.
 Then
 $\xi$ is a stationary distribution for $\X^{(\srfactorvect)}=(X^{(\srfactorvect)}(t):t\geq 0)$ as well.\\
Moreover, if  $\JsetB{\srfactorvect} = \emptyset$, then $\X^{(\srfactorvect)}$ is ergodic.\\
If  $\JsetB{\srfactorvect}\neq \emptyset$, then $\X^{(\srfactorvect)}$ is not irreducible on
$\mathbb{N}_{0}^{\Jset}$ and its state space is divided into an infinite set of closed subspaces
\begin{equation*}
\mathbb{N}_{0}^{\JsetW{\srfactorvect}}\times \{(n_j:j\in\JsetB{\srfactorvect})\}\quad\forall
(n_j:j\in\JsetB{\srfactorvect})\in \mathbb{N}_{0}^{\JsetB{\srfactorvect}}\,.
\end{equation*}
For any probability distribution $\varphi$ on
$\mathbb{N}_{0}^{\JsetB{\srfactorvect}}$
there exists a stationary distribution $\xi^{(\srfactorvect)}_\varphi$ for $\X^{(\srfactorvect)}$, which is  for
$\nvect=(n_1,\dots,n_J)\in \mathbb{N}_{0}^{\bar{J}}$
\begin{equation*} 
\xi^{(\srfactorvect)}_\varphi(\nvect)=\xi^{(\srfactorvect)}_\varphi(n_1,\dots,n_J) =
\prod_{j\in \JsetW{\srfactorvect}} \prod_{k=1}^{n_j} \frac{\eta_j}{\mu_j(k)} C(j)^{-1}\cdot
\varphi(n_j:j\in\JsetB{\srfactorvect})\,.
\end{equation*}
\end{theorem}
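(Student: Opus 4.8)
The plan is to reduce the claim to \prettyref{thm:RS-JN-GB-skip-degrading} by factoring the scalar $\tnfactor$ out of the global balance equation. First I would write down ${x}\cdot Q^{(\srfactorvect)}=0$ for the generator \eqref{eq:RS-GB-JN-general}: it has exactly the shape of the balance equation \eqref{eq:RS-LS-RAND-X-alpha-gbe} used in the proof of \prettyref{thm:RS-JN-GB-skip-degrading}, except that each external-arrival term now carries $\tnfactor\lambda\routmx^{(\avvect)}(0,\cdot)$ instead of $\lambda\routmx^{(\avvect)}(0,\cdot)$, while each service term carries $\srfactor_j\mu_j(\cdot)$ as before.

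The one point that needs a moment's thought is the uniform identity $\srfactor_j=\tnfactor\alpha_j$ for every $j\in\Jset$, which holds in both branches of \eqref{eq:RS-general-availability}--\eqref{eq:RS-total-network-input-factor}: if $||\srfactorvect||_{\infty}\le 1$ then $\tnfactor=1$ and $\alpha_j=\srfactor_j$, while if $||\srfactorvect||_{\infty}>1$ then $\tnfactor=||\srfactorvect||_{\infty}$ and $\alpha_j=\srfactor_j/||\srfactorvect||_{\infty}$. Consequently every summand on either side of the balance equation carries precisely one factor $\tnfactor$ (the external-arrival terms through $\tnfactor\lambda$, the service terms through $\srfactor_j=\tnfactor\alpha_j$), so dividing by $\tnfactor>0$ turns the balance equation into literally \eqref{eq:RS-LS-RAND-X-alpha-gbe} with $\srfactor_j$ replaced by $\alpha_j$ throughout.

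This is exactly the point reached in the proof of \prettyref{thm:RS-JN-GB-skip-degrading} after the step ``replace $\srfactor_j$ by $\alpha_j$'', so from here on the computation is verbatim. In the case $\JsetB{\srfactorvect}\neq\emptyset$ the columns $\routmx^{(\avvect)}(\cdot,i)$ vanish for $i\in\JsetB{\srfactorvect}$, the equation collapses to a sum over $\JsetW{\srfactorvect}$, inserting the candidate $\xi^{(\srfactorvect)}_\varphi$ cancels $\prod_{j\in\JsetW{\srfactorvect}}C(j)^{-1}\varphi(\cdot)$ together with the factors $\eta_j/\mu_j(k)$, and after reordering the identity reduces to $(\alpha_j\eta_j:j\in\Jset_0)\cdot\routmx^{(\avvect)}=(\alpha_j\eta_j:j\in\Jset_0)$; this holds because the extended traffic solution $(\eta_j:j\in\Jset_0)$ is invariant for $\routmx$ and hence, by \prettyref{prop:GS-y-from-eta} (equivalently by \prettyref{thm:RS-r-alpha}), $(\alpha_j\eta_j:j\in\Jset_0)$ is invariant for $\routmx^{(\avvect)}$. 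The case $\JsetB{\srfactorvect}=\emptyset$, where $\xi^{(\srfactorvect)}_\varphi$ is just $\xi$, is the same calculation without the collapsing step.

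For the qualitative statements I would argue exactly as in \prettyref{thm:RS-JN-GB-skip-degrading}. If $\JsetB{\srfactorvect}=\emptyset$ then $\alpha_j>0$ for all $j\in\Jset_0$, so $\routmx^{(\avvect)}$ is irreducible on $\Jset_0$ by \prettyref{thm:RS-r-alpha}; combined with the positive service rates this makes $\X^{(\srfactorvect)}$ irreducible on $\N_0^{\Jset}$, and summability of $\xi$ is inherited from ergodicity of the original network (the per-node factors $\prod_{k}\eta_j/\mu_j(k)$ are unchanged and $\JsetW{\srfactorvect}\subset\Jset$), so $\X^{(\srfactorvect)}$ is positive recurrent, hence ergodic. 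If $\JsetB{\srfactorvect}\neq\emptyset$ then, since the $i$-th column of $\routmx^{(\avvect)}$ is zero for $i\in\JsetB{\srfactorvect}$, no transition ever changes the coordinates $(n_j:j\in\JsetB{\srfactorvect})$, which gives the stated decomposition of $\N_0^{\Jset}$ into closed subspaces, on each of which $\xi^{(\srfactorvect)}_\varphi$ is stationary by the balance computation above. I do not expect any genuine obstacle here: once the $\tnfactor$-factoring is observed, the whole argument is bookkeeping on top of \prettyref{thm:RS-JN-GB-skip-degrading}, the essential (and slightly surprising) content being that rescaling the external input by $\tnfactor$ precisely compensates the rescaling of the service rates, leaving the product form built from the \emph{unchanged} ratios $\eta_j/\mu_j(k)$ intact.
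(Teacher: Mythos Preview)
Your proposal is correct and follows essentially the same approach as the paper: both proofs hinge on the identity $\srfactor_j=\tnfactor\alpha_j$ (equivalently $\srfactor_j/\tnfactor=\alpha_j$) to factor the scalar $\tnfactor$ out of the global balance equation and thereby reduce to the computation already carried out in the proof of \prettyref{thm:RS-JN-GB-skip-degrading}. The only cosmetic difference is order of operations---you divide by $\tnfactor$ before inserting the candidate distribution, whereas the paper first inserts the candidate, cancels the normalization factors, and then divides through by $\tnfactor$ together with the product $\prod_{j}\prod_{k}\eta_j/\mu_j(k)$; in both cases one lands on equation \eqref{eq:RS-LS-RAND-X-alpha-gbe-reduced2} and proceeds verbatim from there.
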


\begin{theorem}
\label{thm:RS-JN-GB-refl-general}
{\sc [Modified Jackson networks: Randomized reflection and general change of service]}
Let $\X$ be an  ergodic  Jackson network process as described in Section \ref{sect:RS-jackson-networks}
with reversible routing matrix $r$ and
with stationary distribution $\xi$ from \eqref{eq:RS-jackson-steadystate},
where the service intensities $\mu_{i}(n_{i})$  at node $i$ are changed by a factor
$\srfactor_i\in [0,\infty)$ for $i\in \Jset$.

Take $\avvect=(\alpha_i:i\in \Jset_0)$ and $\tnfactor$ as defined in \prettyref{thm:RS-JN-GB-skip-general}, and change routing by randomized reflection according
to Proposition \ref{prop:RS-reflection-eta-alpha} with  $\avvect=(\alpha_i:i\in \Jset_0)$,
and the total network input by factor $\tnfactor$.

Then
 $\xi$ is a stationary distribution for $\X^{(\srfactorvect)}=(X^{(\srfactorvect)}_t:t\geq 0)$ as well.\\
Moreover, if $\JsetB{\srfactorvect} = \emptyset$, then $\X^{(\srfactorvect)}$ is ergodic.\\
If $\JsetB{\srfactorvect}\neq \emptyset$, then $\X^{(\srfactorvect)}$ is not irreducible on
$\mathbb{N}_{0}^{\Jset}$ and its state space is divided into an infinite set of closed subspaces
\begin{equation*}
\mathbb{N}_{0}^{\JsetW{\srfactorvect}}\times \{(n_j:j\in\JsetB{\srfactorvect})\}\quad\forall
(n_j:j\in\JsetB{\srfactorvect})\in \mathbb{N}_{0}^{\JsetB{\srfactorvect}}\,.
\end{equation*}
For any probability distribution $\varphi$ on
$\mathbb{N}_{0}^{\JsetB{\srfactorvect}}$
there exists a stationary distribution $\xi^{(\srfactorvect)}_\varphi$ for $\X^{(\srfactorvect)}$, which is  for
$\mathbf{n}=(n_1,\dots,n_J)\in \mathbb{N}_{0}^{\bar{J}}$
\begin{equation*} 
\xi^{(\srfactorvect)}_\varphi(\mathbf{n})=\xi^{(\srfactorvect)}_\varphi(n_1,\dots,n_J) =
\prod_{j\in \JsetW{\srfactorvect}} \prod_{k=1}^{n_j} \frac{\eta_j}{\mu_j(k)} C(j)^{-1}\cdot
\varphi(n_j:j\in\JsetB{\srfactorvect})\,.
\end{equation*}
\end{theorem}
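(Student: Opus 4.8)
The plan is to reduce the general case $\srfactor_j \in [0,\infty)$ to the already-established degrading case of \prettyref{thm:RS-JN-GB-refl-degrading} by exploiting the homogeneity of the balance equation in the total-input factor $\tnfactor$. The key observation is that, with the choices \eqref{eq:RS-general-availability} and \eqref{eq:RS-total-network-input-factor}, one has $\srfactor_j = \tnfactor\cdot\alpha_j$ for every $j\in\Jset$ --- both when $||\srfactorvect||_{\infty}\le 1$, where $\tnfactor=1$ and $\alpha_j=\srfactor_j$, and when $||\srfactorvect||_{\infty}>1$, where $\tnfactor=||\srfactorvect||_{\infty}$ and $\alpha_j=\srfactor_j/||\srfactorvect||_{\infty}$ --- and likewise the external arrival rate is $\tnfactor\lambda = \tnfactor\lambda\,\alpha_0$ with $\alpha_0=1$. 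Hence in the generator \eqref{eq:RS-GB-JN-general} every positive off-diagonal rate carries exactly one factor $\tnfactor$: arrivals via $\tnfactor\lambda\routmx^{(\avvect)}(0,i)$, and internal transitions and departures via $\srfactor_j\mu_j(n_j)\routmx^{(\avvect)}(\cdot)=\tnfactor\,\alpha_j\mu_j(n_j)\routmx^{(\avvect)}(\cdot)$.

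First I would write down the global balance equation $x\cdot Q^{(\srfactorvect)}=0$ for $\X^{(\srfactorvect)}$ exactly as in the proof of \prettyref{thm:RS-JN-GB-refl-degrading}, now using the generator \eqref{eq:RS-GB-JN-general}. Substituting $\srfactor_j=\tnfactor\alpha_j$ and dividing the whole equation by the common factor $\tnfactor>0$ yields precisely equation \eqref{eq:RS-LS-RAND-X-alpha-gbe} with $\srfactor_j$ replaced by $\alpha_j$ throughout, i.e.\ verbatim the equation analysed there. Consequently the remaining manipulations go through unchanged: inserting the product form $\xi^{(\srfactorvect)}_\varphi$ (respectively $\xi$ when $\JsetB{\srfactorvect}=\emptyset$), cancelling $\prod_{j\in\JsetW{\srfactorvect}}C(j)^{-1}$ and $\varphi$, eliminating the $\routmx^{(\avvect)}(j,j)$ diagonal contributions, and using $\eta_0=\lambda$, $\alpha_0=1$ and $\routmx^{(\avvect)}(j,i)=0$ for $i\in\JsetB{\srfactorvect}$, the balance equation reduces to the single identity \eqref{eq:RS-LS-RAND-X-alpha-gbe-reduced6}, namely that $(\alpha_j\eta_j:j\in\Jset_0)$ is an invariant measure for $\routmx^{(\avvect)}$.

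It then remains to verify this invariance under randomized reflection, which is the content of \prettyref{prop:RS-reflection-eta-alpha}: since $\routmx$ is reversible for $\eta$ (the extended traffic solution, which is the unique invariant measure of the irreducible $\routmx$ up to scaling, hence necessarily the reversibility measure), $\routmx^{(\avvect)}$ is reversible --- in particular invariant --- for $(\alpha_j\eta_j:j\in\Jset_0)$; this closes the balance-equation argument in both cases $\JsetB{\srfactorvect}=\emptyset$ and $\JsetB{\srfactorvect}\neq\emptyset$. The structural statements are then identical to those of \prettyref{thm:RS-JN-GB-refl-degrading}: if $\JsetB{\srfactorvect}=\emptyset$ all $\alpha_j>0$, so $\routmx^{(\avvect)}$ is irreducible on $\Jset_0$ and $\X^{(\srfactorvect)}$ is irreducible on $\mathbb{N}_{0}^{\Jset}$, and possessing the summable (probability) stationary solution $\xi$ it is ergodic; if $\JsetB{\srfactorvect}\neq\emptyset$ the blocked nodes can never be entered, the sets $\mathbb{N}_{0}^{\JsetW{\srfactorvect}}\times\{(n_j:j\in\JsetB{\srfactorvect})\}$ are closed, and $\xi^{(\srfactorvect)}_\varphi$ is stationary on each of them.

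Main obstacle: there is no deep difficulty here --- the crux is the bookkeeping check that $\tnfactor$ factors out homogeneously from every transition rate, which hinges on the precise definitions \eqref{eq:RS-general-availability}--\eqref{eq:RS-total-network-input-factor} forcing $\srfactor_j=\tnfactor\alpha_j$ uniformly in $j$, and on confirming that the subcase $||\srfactorvect||_{\infty}\le 1$ is genuinely subsumed (it collapses to \prettyref{thm:RS-JN-GB-refl-degrading} with $\tnfactor=1$). The only point deserving a word of care is that \prettyref{prop:RS-reflection-eta-alpha} is stated for a stationary \emph{distribution}, whereas we apply it to the unnormalised extended traffic solution $\eta$; but invariance and detailed balance are scale-invariant, so this is harmless.
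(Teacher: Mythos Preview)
Your proposal is correct and follows essentially the same route as the paper's own proof: both arguments hinge on the identity $\srfactor_j=\tnfactor\,\alpha_j$ (equivalently $\srfactor_j/\tnfactor=\alpha_j$) to collapse the general balance equation \eqref{eq:RS-RAND-X-general-srfactor-gbe} back to the degrading-case equation \eqref{eq:RS-LS-RAND-X-alpha-gbe-reduced2}, after which the proof of \prettyref{thm:RS-JN-GB-refl-degrading} applies verbatim, invoking \prettyref{prop:RS-reflection-eta-alpha} for the invariance of $(\alpha_j\eta_j)$. The only cosmetic difference is that you factor out $\tnfactor$ before inserting the product form, whereas the paper inserts the product form first and then divides by $\tnfactor\prod_j\prod_k\eta_j/\mu_j(k)$; your remark that detailed balance is scale-invariant (so \prettyref{prop:RS-reflection-eta-alpha} applies to the unnormalised $\eta$) is a nice clarification the paper leaves implicit.
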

\begin{proof}[Proof
of \prettyref{thm:RS-JN-GB-skip-general} and  \prettyref{thm:RS-JN-GB-refl-general}]
The global balance equation ${x}\cdot Q^{(\srfactorvect)}=0$ for the joint queue length process
$X^{(\srfactorvect)}$ of the modified system is in both settings for $\mathbf{n}=(n_1,\dots,n_J) \in \N_0^{\Jset}$
\begin{eqnarray}
&  & x(\mathbf{n})\left(\sum_{j\in\Jset}\tnfactor\lambda \routmx^{(\avvect)}(0,j)+\sum_{j\in\Jset}1_{[n_{j}>0]}\srfactor_j\mu_{j}(n_{j})
(1-\routmx^{(\avvect)}(j,j))\right)\nonumber \\
 & = & \sum_{i\in\Jset} x(\mathbf{n}-\evect_{i})1_{[n_{i}>0]}\tnfactor\lambda\routmx^{(\avvect)}(0,i)
  +\sum_{j\in\Jset}\sum_{i\in\Jset\backslash\{j\}}x(\mathbf{n}-\evect_{i}+\evect_{j})1_{[n_{i}>0]}
 \srfactor_j\mu_{j}(n_{j}+1)\routmx^{(\avvect)}(j,i)\nonumber \\
 &  & +\sum_{j\in\Jset}x(\mathbf{n}+e_{j})\srfactor_j\mu_{j}(n_{j}+1)
 \routmx^{(\avvect)}(j,0)\,
 .\label{eq:RS-RAND-X-general-srfactor-gbe}
\end{eqnarray}

We first consider  the case $\JsetB{\srfactorvect}\neq \emptyset$. Then for $i\in \JsetB{\srfactorvect}$
we have $\alpha_i=0$ and $\routmx^{(\avvect)}(j,i) =0$ for all $j\in\Jset_0$  and \eqref{eq:RS-RAND-X-general-srfactor-gbe} reduces to
\begin{eqnarray}
&  & x(\mathbf{n})\left(\sum_{j\in W(\srfactorvect)}\tnfactor\lambda \routmx^{(\avvect)}(0,j)
+\sum_{j\in W(\srfactorvect)}1_{[n_{j}>0]}\srfactor_j\mu_{j}(n_{j})
(1-\routmx^{(\avvect)}(j,j))\right)\nonumber \\
 & = & \sum_{i\in W(\srfactorvect)} x(\mathbf{n}-\evect_{i})1_{[n_{i}>0]}
 \tnfactor\lambda\routmx^{(\avvect)}(0,i) \nonumber \\
 &  & +\sum_{j\in W(\srfactorvect)}\sum_{i\in W(\srfactorvect) \backslash\{j\}}x(\mathbf{n}-\evect_{i}+\evect_{j})1_{[n_{i}>0]}
 \srfactor_j\mu_{j}(n_{j}+1)\routmx^{(\avvect)}(j,i)\nonumber \\
 &  & +\sum_{j\in W(\srfactorvect)}x(\mathbf{n}+\evect_{j})\srfactor_j\mu_{j}(n_{j}+1)
 \routmx^{(\avvect)}(j,0)\,
 .\label{eq:RS-RAND-X-general-srfactor-gbe-reduced1}
\end{eqnarray}
Inserting $x(n_1,\dots,n_J) = \prod_{j\in \JsetW{\srfactorvect}} \prod_{k=1}^{n_j} \frac{\eta_j}{\mu_j(k)} C(j)^{-1}\cdot
\varphi(n_j:j\in\JsetB{\srfactorvect})$ for any probability density $\varphi$ on
$\mathbb{N}_{0}^{\JsetB{\srfactorvect}}$ we see that immediately
$\prod_{j\in \JsetW{\srfactorvect}} C(j)^{-1}\cdot \varphi(n_j:j\in\JsetB{\srfactorvect})$
cancels. Multiplication with $\left(\tnfactor\prod_{j\in \JsetW{\srfactorvect}} \prod_{k=1}^{n_j} \frac{\eta_j}{\mu_j(k)}\right)^{-1}$ yields
\begin{eqnarray}
&&\left(\sum_{j\in\JsetW{\srfactorvect}}\lambda \routmx^{(\avvect)}(0,j)+\sum_{j\in\JsetW{\srfactorvect}}1_{[n_{j}>0]}
\frac{\srfactor_j}{\beta}
\mu_{j}(n_{j})
(1-\routmx^{(\avvect)}(j,j))\right)\nonumber \\
 & = & \sum_{i\in\JsetW{\srfactorvect}}
 \frac{\mu_i(n_i)}{\eta_i} 1_{[n_{i}>0]}\lambda\routmx^{(\avvect)}(0,i)\nonumber \\
 && +\sum_{j\in\JsetW{\srfactorvect}}\sum_{i\in\JsetW{\srfactorvect}
 \backslash\{j\}}\frac{\mu_i(n_i)}{\eta_i} 1_{[n_{i}>0]} \frac{\eta_j}{\mu_j(n_j+1)}
 \frac{\srfactor_j}{\beta}\mu_{j}(n_{j}+1)\routmx^{(\avvect)}(j,i)\nonumber \\
 &  & +\sum_{j\in\JsetW{\srfactorvect}} \frac{\eta_j}{\mu_j(n_j+1)}\frac{\srfactor_j}{\beta} \mu_{j}(n_{j}+1)
 \routmx^{(\avvect)}(j,0)\,.
\end{eqnarray}

Using the fact that $\srfactor_j/\beta = \srfactor_j/||\srfactorvect||_{\infty}=\alpha_j$ for all $j\in \Jset$
we get the equation

\begin{eqnarray}
&&\left(\sum_{j\in\JsetW{\srfactorvect}}\lambda \routmx^{(\avvect)}(0,j)+\sum_{j\in\JsetW{\srfactorvect}}1_{[n_{j}>0]}
\alpha_j
\mu_{j}(n_{j})
(1-\routmx^{(\avvect)}(j,j))\right)\nonumber \\
 & = & \sum_{i\in\JsetW{\srfactorvect}}
 \frac{\mu_i(n_i)}{\eta_i} 1_{[n_{i}>0]}\lambda\routmx^{(\avvect)}(0,i)\nonumber \\
 && +\sum_{j\in\JsetW{\srfactorvect}}\sum_{i\in\JsetW{\srfactorvect}
 \backslash\{j\}}\frac{\mu_i(n_i)}{\eta_i} 1_{[n_{i}>0]} \frac{\eta_j}{\mu_j(n_j+1)}
 \alpha_j\mu_{j}(n_{j}+1)\routmx^{(\avvect)}(j,i)\nonumber \\
 &  & +\sum_{j\in\JsetW{\srfactorvect}} \frac{\eta_j}{\mu_j(n_j+1)}\alpha_j \mu_{j}(n_{j}+1)
 \routmx^{(\avvect)}(j,0)\,.
\end{eqnarray}

The last equation is exactly \eqref{eq:RS-LS-RAND-X-alpha-gbe-reduced2}. The rest of the  proof is as in the proof of
\prettyref{thm:RS-JN-GB-skip-degrading} and
\prettyref{thm:RS-JN-GB-refl-degrading}.
\end{proof}

\begin{cor}
\label{cor:RS-JN-GB-alpha-1}
If in the setting of \prettyref{thm:RS-JN-GB-skip-general} or of \prettyref{thm:RS-JN-GB-refl-general}
the Jackson network process $\X$ is ergodic with stationary distribution $\xi$ from \eqref{eq:RS-jackson-steadystate},
and if after modi\-fication all nodes are still working, possibly with degraded capacity, i.e.
$\JsetB{\srfactorvect} = \emptyset,$ then  in both cases
 $\X^{(\srfactorvect)}=(X^{(\srfactorvect)}_t:t\geq 0)$ is ergodic as well with
 $\xi$ as unique stationary and limiting distribution.
\end{cor}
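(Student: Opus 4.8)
The plan is to assemble the statement from what is already in hand. By \prettyref{thm:RS-JN-GB-skip-general} (randomized skipping) and \prettyref{thm:RS-JN-GB-refl-general} (randomized reflection), under the stated hypotheses and with $\JsetB{\srfactorvect}=\emptyset$ the distribution $\xi$ of \eqref{eq:RS-jackson-steadystate} solves the global balance equation $x\cdot Q^{(\srfactorvect)}=0$, so it is a stationary distribution of $\X^{(\srfactorvect)}$, and those theorems also assert that $\X^{(\srfactorvect)}$ is ergodic. It remains only to upgrade ``ergodic, with the stationary $\xi$ in hand'' to ``$\xi$ is the \emph{unique} stationary and limiting distribution.''

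First I would make the ergodicity explicit. Since $\JsetB{\srfactorvect}=\emptyset$, every acceptance probability is strictly positive ($\alpha_j=\srfactor_j$ in the degrading regime, $\alpha_j=\srfactor_j/||\srfactorvect||_{\infty}$ otherwise), so $B(\avvect)=\emptyset$; hence by \prettyref{thm:RS-r-alpha} for skipping, and directly from \eqref{eq:RS-reflection-alpha1} for reflection, the modified routing matrix $\routmx^{(\avvect)}$ is irreducible on $\Jset_0$. Combined with the strictly positive external input rate $\tnfactor\lambda>0$ and the positive service intensities $\mu_j(n_j)>0$ at occupied nodes, the rates \eqref{eq:RS-GB-JN-general} make $\X^{(\srfactorvect)}$ irreducible on $\N_0^{\Jset}$ by the usual Jackson-network communication argument (reach $\mathbf{0}$ by successive departures, then reach any target state by successive external arrivals routed along an $\routmx^{(\avvect)}$-positive path). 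The process is also non-explosive: in any finite time interval only finitely many external arrivals occur (a Poisson process of rate $\tnfactor\lambda$), and each customer's internal itinerary is governed by the finite-state chain $\routmx^{(\avvect)}$, which reaches the sink $0$ in finitely many steps a.s., so only finitely many jumps occur in finite time.

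Then I would invoke the classical theory of continuous-time Markov chains on a countable state space: an irreducible, non-explosive chain that admits a stationary distribution is positive recurrent, its stationary distribution is unique, and $P(X^{(\srfactorvect)}(t)=\nvect)\to\xi(\nvect)$ as $t\to\infty$ for every $\nvect\in\N_0^{\Jset}$, irrespective of the initial law. Applying this with the stationary $\xi$ furnished by \prettyref{thm:RS-JN-GB-skip-general} / \prettyref{thm:RS-JN-GB-refl-general} yields both claims of the corollary simultaneously. I do not expect a genuine obstacle: the corollary is essentially an unpacking of the word ``ergodic'' in the two theorems together with the uniqueness of invariant probability measures for irreducible positive recurrent chains; the only point that deserves a word is non-explosivity of $\X^{(\srfactorvect)}$, which is inherited from that of the underlying ergodic Jackson network $\X$ since the extra input factor $\tnfactor$ is a finite constant.
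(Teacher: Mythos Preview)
Your proposal is correct. The paper gives no separate proof for this corollary, treating it as an immediate restatement of the ergodicity assertion already built into \prettyref{thm:RS-JN-GB-skip-general} and \prettyref{thm:RS-JN-GB-refl-general} together with the standard fact that an ergodic chain has a unique stationary and limiting distribution; your explicit verification of irreducibility of $\routmx^{(\avvect)}$ on $\Jset_0$ (via \prettyref{thm:RS-r-alpha} for skipping and the preserved off-diagonal support in \eqref{eq:RS-reflection-alpha1} for reflection) and of non-explosivity of $\X^{(\srfactorvect)}$ actually supplies more detail than the paper does.
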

\begin{cor}
\label{cor:RS-JN-GB-alpha-2}
If in the framework of \prettyref{thm:RS-JN-GB-skip-general}, respectively of
\prettyref{thm:RS-JN-GB-refl-general} we have $\routmx^{(\avvect)}(0,0)>0$ then the
{\em effective arrival rate} after modification is $\tnfactor\lambda(1-\routmx^{(\avvect)}(0,0))$.
\end{cor}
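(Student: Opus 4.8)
The plan is to identify the \emph{effective arrival rate} with the long-run rate at which external units genuinely join the network, and to read this quantity off the generator \eqref{eq:RS-GB-JN-general} together with the fact that $\routmx^{(\avvect)}$ is a stochastic matrix.

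First I would recall that in the modified system the external input funnelled through the source/sink node $0$ is a Poisson stream of total rate $\tnfactor\lambda$ --- this is precisely the ``change the total network input by factor $\tnfactor$'' ingredient of \prettyref{thm:RS-JN-GB-skip-general}, respectively \prettyref{thm:RS-JN-GB-refl-general}. Each arriving unit, before settling into a queue, performs one step of the modified routing started in state $0$: it is accepted at node $i\in\Jset$ with probability $\routmx^{(\avvect)}(0,i)$, and with probability $\routmx^{(\avvect)}(0,0)$ it is routed back to $0$, i.e.\ it departs immediately without ever joining a queue. Consequently the rate at which units actually enter the network is
\[
\sum_{i\in\Jset} q^{(\srfactorvect)}(\nvect\qsep\nvect+\evect_{i}) \;=\; \tnfactor\lambda\sum_{i\in\Jset}\routmx^{(\avvect)}(0,i)\,,
\]
which is independent of the current state $\nvect$, so that the admitted arrivals again form a Poisson stream. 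Since $\routmx^{(\avvect)}$ is stochastic --- for randomized skipping by the remark following \prettyref{thm:RS-r-alpha} (namely $\routmx^{(\avvect)}\evect=\evect$), for randomized reflection directly from \eqref{eq:RS-reflection-alpha1} --- its $0$-th row satisfies $\sum_{i\in\Jset_0}\routmx^{(\avvect)}(0,i)=1$, hence $\sum_{i\in\Jset}\routmx^{(\avvect)}(0,i)=1-\routmx^{(\avvect)}(0,0)$, and substituting into the display yields the effective arrival rate $\tnfactor\lambda(1-\routmx^{(\avvect)}(0,0))$. The hypothesis $\routmx^{(\avvect)}(0,0)>0$ only serves to certify that this rate is strictly below the offered rate $\tnfactor\lambda$, i.e.\ that a genuine rejection of load takes place. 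As a consistency check one may note that, under the product-form stationary distribution established in the two theorems, the network throughput --- the aggregate departure rate obtained by averaging $\sum_{j\in\JsetW{\srfactorvect}} 1_{[n_{j}>0]}\srfactor_j\mu_j(n_j)\routmx^{(\avvect)}(j,0)$ over the stationary law --- equals the same quantity, as it must in equilibrium.

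There is essentially no analytic obstacle: once the accounting above is set up, the identity is an immediate consequence of row-stochasticity of $\routmx^{(\avvect)}$. The only point requiring care is conceptual rather than computational --- one has to pin down what ``effective arrival rate'' means (the long-run intensity of actual entrances, equivalently of the thinned external Poisson process) and to justify that a transition $0\to 0$ in the modified chain really models rejection, since $\routmx(0,0)=0$ in the original network and the positivity of $\routmx^{(\avvect)}(0,0)$ is produced entirely by skipping over, respectively reflection at, the degraded or broken-down neighbours of the source.
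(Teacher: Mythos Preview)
Your argument is correct and matches the paper's treatment: the corollary is stated there without proof, being regarded as immediate from the generator \eqref{eq:RS-GB-JN-general} and row-stochasticity of $\routmx^{(\avvect)}$ (indeed the identity $\sum_{j\in\JsetW{\srfactorvect}}\lambda\routmx^{(\avvect)}(0,j)=\lambda(1-\routmx^{(\avvect)}(0,0))$ already appears inside the proof of Theorems \ref{thm:RS-JN-GB-skip-degrading} and \ref{thm:RS-JN-GB-refl-degrading}). Your additional remarks on the interpretation of $\routmx^{(\avvect)}(0,0)$ as rejection and the throughput consistency check are sound but go beyond what the paper supplies.
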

 The following result summarizes the content of  \prettyref{thm:RS-JN-GB-refl-general}
and \prettyref{thm:RS-JN-GB-skip-general} and extends both to an abstract framework.
\begin{cor}
\label{cor:RS-JN-GB-alpha-3}
{\sc [Modified Jackson networks: General change of routing and general change of service]}
Let $\X$ be an  ergodic  Jackson network process as described in  \prettyref{sect:RS-jackson-networks}
with stationary distribution $\xi$ from \eqref{eq:RS-jackson-steadystate},
where the service intensities $\mu_{i}(n_{i})$  at node $i$ are changed by a factor
$\srfactor_i\in [0,\infty) $ for $i\in \Jset$.
We change routing
to follow some matrix  $\routmx^{(\avvect)}$ with invariant measure
$y=(\alpha_j \eta_j:j\in \Jset_0)$ and increase the total network input by a factor $\tnfactor$, where $\alpha_0=1$, $\alpha_j$ and $\tnfactor$ are defined as in
\eqref{eq:RS-general-availability} and
\eqref{eq:RS-total-network-input-factor}.

 We denote the resulting Markovian state process
on $\mathbb{N}_{0}^{\Jset}$ by  $\X^{(\srfactorvect)}=(X^{(\srfactorvect)}(t):t\geq 0)$.

Then $\xi$ is a stationary distribution for $\X^{(\srfactorvect)}=(X^{(\srfactorvect)}(t):t\geq 0)$ as well.\\
Moreover, we define $\JsetB{\srfactorvect} = \{j\in \Jset: \srfactor_j=0\}$.

If $\JsetB{\srfactorvect} = \emptyset$, then $\X^{(\srfactorvect)}$ is ergodic.

If $\JsetB{\srfactorvect}\neq \emptyset$, then $\X^{(\srfactorvect)}$ is not irreducible on
$\mathbb{N}_{0}^{\bar{J}}$ and its state space is divided into an infinite set of closed subspaces
\begin{equation*}
\mathbb{N}_{0}^{\JsetW{\srfactorvect}}\times \{(n_j:j\in\JsetB{\srfactorvect})\}\quad\forall
(n_j:j\in\JsetB{\srfactorvect})\in \mathbb{N}_{0}^{\JsetB{\srfactorvect}}\,.
\end{equation*}
For any probability distribution $\varphi$ on
$\mathbb{N}_{0}^{\JsetB{\srfactorvect}}$
there exists a stationary distribution $\xi^{(\srfactorvect)}_\varphi$ for $\X^{(\srfactorvect)}$, which is  for
$\nvect=(n_1,\dots,n_J)\in \mathbb{N}_{0}^{\bar{J}}$
\begin{equation*} 
\xi^{(\srfactorvect)}_\varphi(\nvect))=\xi^{(\srfactorvect)}_\varphi(n_1,\dots,n_J) =
\prod_{j\in \JsetW{\srfactorvect}} \prod_{k=1}^{n_j} \frac{\eta_j}{\mu_j(k)} C(j)^{-1}\cdot
\varphi(n_j:j\in\JsetB{\srfactorvect})\,.
\end{equation*}
\end{cor}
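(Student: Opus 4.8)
Since \prettyref{cor:RS-JN-GB-alpha-3} merely abstracts \prettyref{thm:RS-JN-GB-skip-general} and \prettyref{thm:RS-JN-GB-refl-general}, the plan is to revisit their common proof and isolate exactly which properties of the modified routing matrix $\routmx^{(\avvect)}$ were actually used. Inspection shows that once the product-form candidate has been inserted into the global balance equation and common factors cancelled, only two facts about $\routmx^{(\avvect)}$ enter: that it is a stochastic matrix on $\Jset_0$ (so that $Q^{(\srfactorvect)}$ in \eqref{eq:RS-GB-JN-general} is a genuine generator), and that $y=(\alpha_j\eta_j:j\in\Jset_0)$ is an invariant measure for it. Both are now hypotheses, so the argument should transfer essentially verbatim; what remains is to recover the state-space decomposition and the ergodicity statement from these same two facts.

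First I would write down the global balance equation $x\cdot Q^{(\srfactorvect)}=0$ for $\X^{(\srfactorvect)}$, which is \eqref{eq:RS-RAND-X-general-srfactor-gbe} with $\beta=\tnfactor$. Next, from invariance of $y$ together with strict positivity of the extended traffic solution $\eta$ and $\alpha_j>0$ for $j\in\JsetW{\srfactorvect}\cup\{0\}$, the invariance identity $0=y_i=\sum_j\alpha_j\eta_j\routmx^{(\avvect)}(j,i)$ for $i\in\JsetB{\srfactorvect}$ forces $\routmx^{(\avvect)}(j,i)=0$ for every such $i$ and every $j\in\JsetW{\srfactorvect}\cup\{0\}$; since moreover $\srfactor_j\mu_j(\cdot)=0$ for $j\in\JsetB{\srfactorvect}$, no transition of $Q^{(\srfactorvect)}$ changes the coordinates $(n_j:j\in\JsetB{\srfactorvect})$, so each set $\mathbb{N}_0^{\JsetW{\srfactorvect}}\times\{(n_j:j\in\JsetB{\srfactorvect})\}$ is closed. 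Restricting the balance equation to such a slice reduces it to \eqref{eq:RS-RAND-X-general-srfactor-gbe-reduced1}; inserting $\xi^{(\srfactorvect)}_\varphi$, cancelling the factor $\prod_{j\in\JsetW{\srfactorvect}}C(j)^{-1}\cdot\varphi(n_j:j\in\JsetB{\srfactorvect})$ and the product $\prod_{j\in\JsetW{\srfactorvect}}\prod_{k\le n_j}\eta_j/\mu_j(k)$, and using $\srfactor_j/\tnfactor=\alpha_j$ from \eqref{eq:RS-general-availability}--\eqref{eq:RS-total-network-input-factor}, one lands on exactly equation \eqref{eq:RS-LS-RAND-X-alpha-gbe-reduced2}. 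From that point onward the reduction carried out in the proof of \prettyref{thm:RS-JN-GB-skip-degrading} and \prettyref{thm:RS-JN-GB-refl-degrading} applies word for word, its crux being identity \eqref{eq:RS-LS-RAND-X-alpha-gbe-reduced6}, which upon summing over the admissible state vectors is precisely $y\cdot\routmx^{(\avvect)}=y$ — the hypothesis. This proves $\xi^{(\srfactorvect)}_\varphi$ solves the balance equation; that it is a probability measure follows because the $\JsetW{\srfactorvect}$-product is a finite product of normalizable marginals of the form \eqref{eq:RS-jackson-steadystate} and $\varphi$ is a probability density on $\mathbb{N}_0^{\JsetB{\srfactorvect}}$.

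For $\JsetB{\srfactorvect}=\emptyset$ all $\alpha_j>0$, the single slice is all of $\mathbb{N}_0^{\Jset}$, and the same computation shows $\xi$ itself solves global balance; under irreducibility of $\X^{(\srfactorvect)}$ — which holds whenever $\routmx^{(\avvect)}$ is irreducible on $\Jset_0$, as it is for randomized skipping by \prettyref{thm:RS-r-alpha} — this yields ergodicity with $\xi$ as the unique stationary and limiting distribution. The main obstacle I expect is not the balance-equation algebra, which is inherited, but making the decomposition argument airtight: one must check carefully that "invariant measure for $\routmx^{(\avvect)}$" plus positivity of $\eta$ indeed annihilates every transition probability into $\JsetB{\srfactorvect}$ from states the process can actually occupy, and — for the sharper ergodicity and uniqueness assertions on each slice — that the required irreducibility of $\routmx^{(\avvect)}$ on $\JsetW{\srfactorvect}\cup\{0\}$ is available; in the concrete randomized-reflection situation this last point may fail, which is exactly why the corollary only claims existence of some $\xi^{(\srfactorvect)}_\varphi$, rather than uniqueness, when $\JsetB{\srfactorvect}\neq\emptyset$.
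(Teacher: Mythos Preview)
Your proposal is correct and follows the paper's approach: the paper gives no separate proof for \prettyref{cor:RS-JN-GB-alpha-3}, presenting it as an abstraction of \prettyref{thm:RS-JN-GB-skip-general} and \prettyref{thm:RS-JN-GB-refl-general}, whose common proof uses only stochasticity of $\routmx^{(\avvect)}$ and the invariance $y\cdot\routmx^{(\avvect)}=y$ at the decisive step \eqref{eq:RS-LS-RAND-X-alpha-gbe-reduced6}.

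Your treatment is in one place more careful than the paper's. In the proofs of \prettyref{thm:RS-JN-GB-skip-degrading}--\prettyref{thm:RS-JN-GB-refl-general} the reduction from \eqref{eq:RS-RAND-X-general-srfactor-gbe} to \eqref{eq:RS-RAND-X-general-srfactor-gbe-reduced1} invokes $\routmx^{(\avvect)}(j,i)=0$ for $i\in\JsetB{\srfactorvect}$ and \emph{all} $j\in\Jset_0$, which is immediate from the explicit constructions (the $\JsetB{\srfactorvect}$-columns of the skipping and reflection matrices vanish by \eqref{eq:RS-Amatrix6} and \eqref{eq:RS-reflection-alpha1}). In the abstract setting this is no longer given, and you correctly derive from $0=y_i=\sum_j\alpha_j\eta_j\,\routmx^{(\avvect)}(j,i)$ only that $\routmx^{(\avvect)}(j,i)=0$ for $j\in\JsetW{\srfactorvect}\cup\{0\}$, and then close the gap by observing that any residual $\routmx^{(\avvect)}(j,i)$ with $j\in\JsetB{\srfactorvect}$ is multiplied by $\srfactor_j=0$ in $Q^{(\srfactorvect)}$. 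This is exactly the right patch. Your caveat about irreducibility of $\routmx^{(\avvect)}$ and the ergodicity assertion is also well placed; the paper is silent on this point in the corollary.
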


\subsection{Jackson networks in a random environment}\label{sect:RS-JN-random-env}
We consider a classical Jackson network as described in \prettyref{sect:RS-jackson-networks} and assume that its development is influenced by the time varying status of its external environment.
On the other hand the network may trigger the environment to change its status.
We therefore have a twofold interacting
dynamic which is determined on one side by the environment as a
jump process $\Y=(Y(t):t\geq 0)$ in continuous  time, changes of which result in changes of the network's parameter,
and on the other side by the network process $\X=(X(t):t\geq 0)$ as jump process  
where some jumps of $\X$ enforce the environment to immediately react to this jump.
To be more  precise:

\label{page:RS-JN-random-env}
The state (status) of the environment is recorded in a countable environment state space $K$
and  whenever the environment at time $t$ is in state $Y(t)=k\in K$ it changes its status
to $m\in K$ with rate $\nu(k,m)$.

The network process $\X$ records the joint queue length vector, as in
\prettyref{sect:RS-jackson-networks}, and $X_j(t) =n_j\in \N_0$  is the local queue length at node
$j\in \Jset$. Whenever the environment is in state $k\in K$ and
 at node $j$ a customer is served and leaves the network, then this jump of the local
queue length triggers with probability $R_j(k,m)$  the environment to jump  immediately from state $k$
to $m\in K$.

Associated with each environment state $k\in K$ is a  vector $\srfactorvect(k)\in [0,\infty)^{\Jset}$
which determines the factor by which the service capacities at the nodes are
changed, similar to the vector  $\srfactorvect \in [0,\infty)^{\Jset}$ in  \prettyref{sect:RS-modified-jackson-networks}.
This results in a state dependent service rate $\mu_j(n_j,k)=\srfactor_j(k)\cdot \mu_j(n_j)$ if the
queue length at node $j$ is $n_j$ and the environment status is $k$.

The network reacts to the impact of the environment when this is in state $k$ by modifying the routing
according to different strategies, which we have described in Sections \ref{sect:RS-randomskip} and
\ref{sect:RS-randomreflec}
and possibly with admitting
more customers into the network. The latter part
of the control strategy is set in force
whenever in environment state $k$
there exist some $\srfactor_j(k)>1$.
In such state $k \in K$ the overall
arrival rate to the network is increased
by $\tnfactor(\srfactorvect(k))=||\srfactorvect(k)||_{\infty}$
from $\lambda$ to $\lambda \cdot \tnfactor(\srfactorvect(k))$
.

A schematic example of this kind of system is shown on
\prettyref{fig:RS-jackson-skipping-environement}.

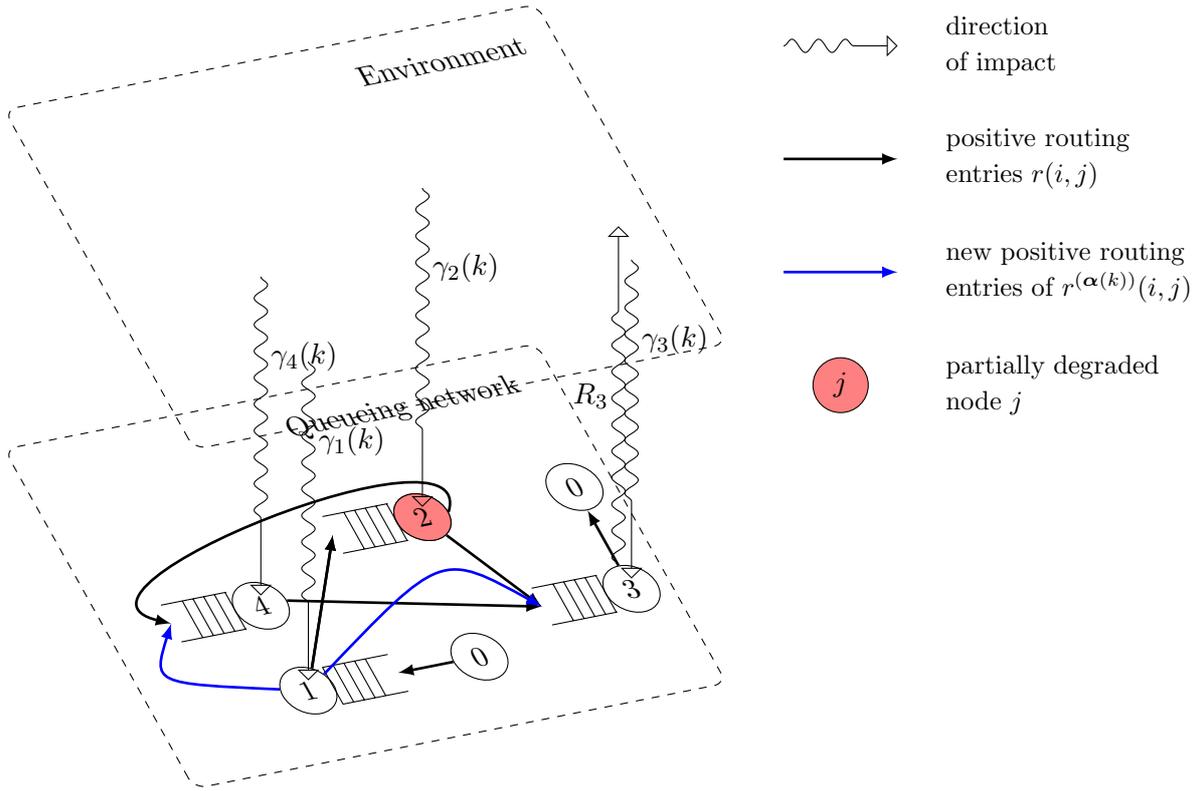
\begin{figure}[H]
\centering
\begin{tikzpicture}

\newcommand{\yslant}{0.2} 
\newcommand{\xslant}{-0.5}
\newcommand{\height}{6}

\begin{scope}


\end{scope}

\begin{scope}
[every node/.append style={yslant=\yslant,xslant=\xslant},yslant=\yslant,xslant=\xslant]
  \node()[] at (3.0,4) {Queueing network};
  \draw[dashed,rounded corners=1ex] (-2,-0.5) rectangle (5,4.5);

  \node(0-1-orgn)[] at (2.25,0.5) {};
  \node(0-2-orgn)[] at (4.5,2.5) {};
  \node(1-orgn)[] at (0,0.5) {};
  \node(1-Q-orgn)[] at (0.75,0.5) {};
  \node(2-orgn)[] at (2.5,2.5) {};
  \node(2-Q-orgn)[] at (1.75,2.5) {};
  \node(2-orgn-under)[] at (2.5,2) {};
  \node(2-orgn-1)[] at (3,2.5) {}; 
  \node(2-orgn-2)[] at (3,3) {};
  \node(2-orgn-3)[] at (0,3) {};
  \node(3-orgn)[] at (4.5,1) {};
  \node(3-Q-orgn)[] at (3.75,1) {};
  \node(4-orgn)[] at (0,1.75) {};
  \node(4-Q-orgn)[] at (-0.75,1.75) {};
  \node(4-orgn-under)[] at (-1.8,1) {};
  \node(5-orgn)[] at (0,1.75) {};

  \node(0-1)[draw, shape=circle, color=external.fg.color] at (0-1-orgn) {$0$};
  \node(0-2)[draw, shape=circle, color=external.fg.color] at (0-2-orgn) {$0$};
  \node(1)[draw, shape=circle] at (1-orgn) {$1$};
  \node(2-white)[draw, shape=circle, fill=white] at (2-orgn) {$2$};
  \node(2)[draw, shape=circle, fill=down.partially.bg.color] at (2-orgn) {$2$};
  \node(3)[draw, shape=circle] at (3-orgn) {$3$};
  \node(4)[draw, shape=circle] at (4-orgn) {$4$};

\node(1-Q)[draw, shape=queue, queue head=west, queue size=infinite,
minimum width =24 pt, minimum height=15pt] at(1-Q-orgn) {};
\node(2-Q)[draw, shape=queue, queue head=east, queue size=infinite,
minimum width =24 pt, minimum height=15pt] at(2-Q-orgn) {};
\node(3-Q)[draw, shape=queue, queue head=east, queue size=infinite,
minimum width =24 pt, minimum height=15pt] at(3-Q-orgn) {};
\node(4-Q)[draw, shape=queue, queue head=east, queue size=infinite,
minimum width =24 pt, minimum height=15pt] at(4-Q-orgn) {};

\draw[arrows={-latex}, thick,
  line width=\customerslinewidth] (1) -- (2-Q.west);
\draw[arrows={-latex}, thick,
  line width=\customerslinewidth] (2) -- (3-Q.west);
\draw[arrows={-latex}, thick,
  line width=\customerslinewidth] (3) -- (0-2);
\draw[arrows={-latex}, thick,
  line width=\customerslinewidth] (0-1) -- (1-Q.east);
\draw[arrows={-latex}, thick,
  line width=\customerslinewidth] (2.east) to [bend right=140] (4-Q.west);
\draw[arrows={-latex}, thick,
  line width=\customerslinewidth] (4.east) -- (3-Q.west);

\draw[arrows={-latex}, thick,
color=blue,
line width=\customerslinewidth] (1) .. controls (2-orgn-under) .. (3-Q.west);

\draw[arrows={-latex}, thick,
color=blue,
line width=\customerslinewidth,
bend right=40] (1) .. controls (4-orgn-under) ..(4-Q.west);

\end{scope}

\begin{scope}
[shift={(0,4.5)}, every node/.append style={yslant=\yslant,xslant=\xslant},yslant=\yslant,xslant=\xslant]

  \node()[] at (3.5,4) {Environment};
  \draw[dashed,rounded corners=1ex] (-2,-0.5) rectangle (5,4.5);

  \node(1-orgn-env)[] at (0,0.5) {};
  \node(2-orgn-env)[] at (2.5,2.5) {};
  \node(3-orgn-env)[] at (4.5,1) {};
  \node(4-orgn-env)[] at (0,1.75) {};
  \node(5-orgn)[] at (0,1.75) {};

  \node(1-env)[draw, shape=circle,opacity=.0] at (1-orgn-env) {$1$};
  \node(2-env)[draw, shape=circle,opacity=.0,
    fill=  down.partially.bg.color] at (2-orgn-env) {$2$};
  \node(3-env)[draw, shape=circle,opacity=.0] at (3-orgn-env) {$3$};
  \node(4-env)[draw, shape=circle,opacity=.0] at (4-orgn-env) {$4$};

\end{scope}

\begin{scope}[every edge/.append
style={decorate,
decoration={snake,post length=1cm}}
]


\path[arrows={-open triangle 90}]
     (1-orgn-env) edge[decorate, decoration={snake}] node[near start, right] {$\srfactor_1(k)$}  (1-orgn);
\path[arrows={-open triangle 90}]
     (2-orgn-env) edge node[near start, right] {$\srfactor_2(k)$}  (2-orgn);



\path[arrows={-open triangle 90}]
     (3-orgn-env) edge node[near start, right] {$\srfactor_3(k)$}  (3-orgn);
\path[arrows={-open triangle 90}]
     (4-orgn-env) edge node[near start, right] {$\srfactor_4(k)$}  (4-orgn);


\path[arrows={-open triangle 90}]
     (3.north) edge node[left] {$R_3$}  (3-env.north);
\draw[arrows={-latex}, thick,
  line width=\customerslinewidth] (1) -- (2-Q.west);

\end{scope}

\begin{scope}[shift={(6,9)}]
\path[arrows={-open triangle 90}]
     (0,0)edge[decorate, decoration={snake,post length=0.5cm}]  (1.5,0);
\node()[right, align=left] at (2.0,0){\small direction\\\small of impact};

\draw[arrows={-latex}, thick,
  line width=\customerslinewidth] (0,-1.5) -- (1.5,-1.5);
\node()[right, align=left] at (2.0,-1.5){\small positive routing\\\small entries $r(i,j)$};

\draw[arrows={-latex}, thick,
  color=blue,
  line width=\customerslinewidth] (0,-3) -- (1.5,-3);
\node()[right, align=left] at (2.0,-3){\small new positive routing\\\small entries of $r^{(\avvect(k))}(i,j)$};

\node()[draw, shape=circle, fill=down.partially.bg.color] at (0.75,-4.5) {$j$};
\node()[right, align=left] at (2.0,-4.5){\small partially degraded\\\small node $j$ };


\end{scope}
\end{tikzpicture}
\caption{Jackson network and random environment with
two-way interaction, The service rates are
modified by the environment. The routing is adopted
according to the skipping rule. The customers, who
leave the node 3, modify the environment.}
\label{fig:RS-jackson-skipping-environement}
\end{figure}

\subsubsection{Rerouting according to randomized skipping}\label{sect:RS-JN-random-env-skip}
In this section we consider the case that the modification of routing in reaction to the
servers' change of capacity is by randomized skipping according to Section \ref{sect:RS-randomskip}.
We will investigate this case in
full detail, while other modifications thereafter  can be described with less details.\\
 We need here environment dependent rerouting  with acceptance probabilities
$\avvect = \avvect(\srfactor(k))$, modified rerouting matrices $r^{(\avvect(\srfactor(k)))}$,
and overall load factors  $\tnfactor(\srfactor(k))$.

To keep notation short we will write $\avvect(k) =(\alpha_j(k):j\in \Jset_0)$,
instead of $\avvect(\srfactor(k))$,
$r^{(\avvect(k))}$ instead of $r^{(\avvect(\srfactor(k)))}$
and $\tnfactor(k)$ instead of $\tnfactor(\srfactor(k))$.

The randomized skipping according to \prettyref{sect:RS-randomskip} yields a routing
regime $r^{(\avvect(k))}$ according to \prettyref{thm:RS-r-alpha},
and the total service input rate
is changed by a factor $\tnfactor(k)$.
$\avvect$ and $\tnfactor$ are defined
 similar to  \eqref{eq:RS-general-availability}
and \eqref{eq:RS-total-network-input-factor},
 i.e. for  $k\in K$:
\begin{eqnarray}
	\alpha_0(k)&=&1\,, \nonumber \\
	\alpha_j(k) &=&
	\begin{cases}
	\srfactor_j(k)
	& \qquad \text{\upshape if}\quad
	||\srfactorvect(k)||_{\infty} \leq 1\,, \\
	\frac{\srfactor_j(k)}{||\srfactorvect(k)||_{\infty}}
	& \qquad \text{\upshape if}\quad
	||\srfactorvect(k)||_{\infty} > 1\,,
	\end{cases}
	\qquad \forall j \in \Jset
	\label{eq:RS-general-availability-from-k}
\end{eqnarray}
and
\begin{equation}
\label{eq:RS-total-network-input-factor-from-k}
	\tnfactor(k) :=
	\begin{cases}
	1
	& \qquad \text{\upshape if}\quad
	||\srfactor(k)||_{\infty} \leq 1 \,, \\
	||\srfactor(k)||_{\infty}
	& \qquad \text{\upshape if}\quad ||\srfactor(k)||_{\infty} > 1\,.
	\end{cases}
\end{equation}

We further define $\JsetB{\srfactorvect(k)}$ and $\JsetW{\srfactorvect(k)}$  similar to \prettyref{def:RS-JBA}
as set of completely broken down nodes, resp. as set of nodes which, although possibly being degraded
or upgraded, can still serve customers under environment condition $k$.\\

\begin{defn}
\label{def:RS-LS-RAND} We denote the coupled process {\sc (queue lengths, environment)} over time by
$\Z=(\X,\Y) = (Z(t):t\geq 0) = ((X(t),Y(t)):t\geq 0)$ with state space $E:= \N_0^{\Jset}\times K$.
\end{defn}

As described above the dynamics of $\Z$ relies for the environment process $\Y$ especially on
a generator matrix $\myV=(\nu(k,m): k,m\in K)$ and  stochastic matrices
$\envmx_j=(\envmx_j(k,m): k,m\in K), j\in \Jset$. Recall that the original extended routing matrix $r=(r(i,j):i,j\in\Jset_0)$ is irreducible
and that $r^{(\alpha(k))}$ in the considered case of randomized skipping  is irreducible on $\JsetW{\srfactor(k)}\cup \{0\}$.

With the standard independence assumptions for the
inter-arrival and service times and the conditional
independence assumptions for the routing process and the jumps of the environment triggered by
departing customers the following statement is obvious.

\begin{prop}\label{prop:RS-processZ}
The queue lengths-environment process $\Z$ is a homogeneous Markov process  on state space
$E:= \N_0^{\Jset}\times K$  with generator
$Q^\Z = (q^\Z((\mathbf{n},k),(\mathbf{n}^{'},k')):
(\mathbf{n},k),(\mathbf{n}^{'},k')\in E)$.
The strict positive transition rates of $Q^\Z$, which describe the physical actions of the network
and the environment, are for $(\mathbf{n},k)=((n_1,\dots,n_J),k)\in \mathbb{N}_{0}^{\Jset}\times K$
\begin{align}
q^\Z((\nvect,k),(\nvect+\evect_{i},k)) & = \tnfactor(k)\lambda \routmx^{(\avvect(k))}(0,i), & i\in \Jset\,,
\nonumber \\
q^\Z((\nvect,k),(\nvect-\evect_{j}+\evect_{i},k)) &
= 1_{[n_{j}>0]}
\srfactor_j(k)\mu_{j}(n_{j})\routmx^{(\avvect(k))}(j,i),& j,i\in \Jset,~ i\neq j\,, \nonumber \\
q^\Z((\nvect,k),(\nvect-\evect_{j},m)) &
 = 1_{[n_{j}>0]}\srfactor_j(k)\mu_{j}(n_{j})
\routmx^{(\avvect(k))}(j,0)R_j(k,m),& j\in\Jset,\nonumber \\
q^\Z((\nvect,k),(\nvect,m)) & = \nu(k,m),& m\in K\,.
\label{eq:RS-LS-RAND-q-transition-rates}
\end{align}
\end{prop}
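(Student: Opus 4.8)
The plan is to realize $\Z=(\X,\Y)$ explicitly as a piecewise-constant pure jump process driven by a family of independent exponential clocks, and then to read off its infinitesimal behaviour. First I would collect the driving randomness dictated by the standing assumptions of \prettyref{sect:RS-jackson-networks} and of this section: conditionally on the environment sitting in state $k$, external arrivals form a Poisson stream of intensity $\tnfactor(k)\lambda$; service requirements at node $j$ are i.i.d.\ exponential-$1$ and are processed at instantaneous rate $\srfactor_j(k)\mu_j(n_j)$ when $n_j$ customers are present; the environment's autonomous transitions are governed by the generator $\myV=(\nu(k,m):k,m\in K)$; the routing of each moving customer is driven by the matrix $\routmx$ together with the independent Bernoulli acceptance experiments of the randomized-skipping rule of \prettyref{sect:RS-randomskip}; and each departure from the network at node $j$ triggers an environment jump drawn, independently of everything else, from $\envmx_j(k,\cdot)$. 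All of these ingredients are either memoryless (the exponential holding times, the Poisson streams) or, by the conditional-independence assumptions, depend on the past only through the present state $(\nvect,k)$.

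Second, I would argue that the randomized-skipping mechanism does not spoil the jump-process structure. Between two physical events the pair $(\X,\Y)$ is constant, because the imaginary jumps of randomized skipping consume no time; and by \prettyref{thm:RS-r-alpha} the matrix $\routmx^{(\avvect(k))}=(I-\routmx I_{(1-\avvect(k))})^{-1}\routmx I_{\avvect(k)}$ is a genuine stochastic matrix — equivalently, the first-acceptance time $\tau^{(A)}$ of the auxiliary chain is $P$-a.s.\ finite — so a routed customer lands at its destination (or leaves the network) after finitely many imaginary steps, with aggregate transition probabilities $\routmx^{(\avvect(k))}(\cdot,\cdot)$. Hence at each physical event the new state is a measurable function of the current state and of fresh randomness independent of the past. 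Superposing the independent exponential clocks and invoking the strong Markov property of the resulting embedded construction then yields that $\Z$ is a time-homogeneous Markov process on $E=\N_0^{\Jset}\times K$.

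Third, I would read off the transition rates by the usual competing-exponentials bookkeeping. In state $(\nvect,k)$: an external arrival occurs at rate $\tnfactor(k)\lambda$ and is routed to node $i\in\Jset$ with probability $\routmx^{(\avvect(k))}(0,i)$ (a rejection, of probability $\routmx^{(\avvect(k))}(0,0)$, produces no change), giving rate $\tnfactor(k)\lambda\,\routmx^{(\avvect(k))}(0,i)$ for $\nvect\to\nvect+\evect_i$; a service completion at a node $j$ with $n_j>0$ occurs at rate $\srfactor_j(k)\mu_j(n_j)$, and the departing customer moves to $i\neq j$ with probability $\routmx^{(\avvect(k))}(j,i)$ while the environment stays in $k$, or leaves the network with probability $\routmx^{(\avvect(k))}(j,0)$ and then forces the environment jump $k\to m$ with probability $\envmx_j(k,m)$ — giving rates $1_{[n_j>0]}\srfactor_j(k)\mu_j(n_j)\,\routmx^{(\avvect(k))}(j,i)$ and $1_{[n_j>0]}\srfactor_j(k)\mu_j(n_j)\,\routmx^{(\avvect(k))}(j,0)\envmx_j(k,m)$ respectively; and an autonomous environment transition $k\to m$ occurs at rate $\nu(k,m)$. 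No other one-step transition carries positive rate, so this is exactly \eqref{eq:RS-LS-RAND-q-transition-rates}.

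The step needing the most care is the second one: justifying that compressing the possibly unboundedly many instantaneous imaginary jumps of randomized skipping into the single matrix $\routmx^{(\avvect(k))}$ is legitimate and does not introduce an accumulation of jump times. This is precisely what \prettyref{thm:RS-r-alpha} supplies (a.s.\ finiteness of $\tau^{(A)}$ and stochasticity of $\routmx^{(\avvect(k))}$), after which the remainder is routine superposition of exponential clocks. If $K$ is infinite or the $\mu_j(\cdot)$ are unbounded, the construction is understood up to an explosion time in the usual way; this does not affect the stated form of the generator.
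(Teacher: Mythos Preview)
Your proposal is correct and in fact considerably more detailed than what the paper offers: the paper gives no proof at all, merely prefacing the proposition with the remark that, under ``the standard independence assumptions for the inter-arrival and service times and the conditional independence assumptions for the routing process and the jumps of the environment triggered by departing customers the following statement is obvious.'' Your three-step argument (memoryless driving randomness, \prettyref{thm:RS-r-alpha} to collapse the imaginary skipping jumps into $\routmx^{(\avvect(k))}$, competing-exponentials bookkeeping) is exactly the justification that lies behind that one-line appeal, so the approaches agree in spirit; you simply make explicit what the paper leaves to the reader.
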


\begin{theorem}
\label{thm:GS-BOUNDARY} Assume the queue lengths-environment process $\Z=(\X,\Y)$ from \prettyref{def:RS-LS-RAND}.
to be ergodic and assume further
that the pure Jackson network process $\X$ without environment is ergodic with stationary
and limiting distribution $\xi$ on $ \N_0^{\Jset}$  from \eqref{eq:RS-jackson-steadystate}
\begin{equation*}                        
\xi(\nvect)=\xi(n_1,\dots,n_J)
= \prod_{j=1}^{J} \prod_{k=1}^{n_j} \frac{\eta_j}{\mu_j(k)} C(j)^{-1},
\quad \nvect\in \N_0^{\Jset}\,,
\end{equation*}
with normalizing  constants $C(j)$ for the  marginal (over nodes) distributions of $\X$.\\
Then the queue lengths-environment process $\Z$  has the unique
 steady state distribution $\pi=(\pi(\nvect,k):\nvect\in\mathbb{N}_{0}^{\Jset},k\in K)$
of product form:
\[
\pi(\nvect,k)=\xi(\nvect)\theta(k),\quad \nvect\in\mathbb{N}_{0}^{\Jset},k\in K\,,
\]
where  $\theta$ is the unique stochastic solution of the following {\em reduced generator
equation} \\$\theta\cdot{Q}_{red}=0$ with

\begin{equation}\label{eq:RS-Q-red2}
{Q}_{red} := {\left[\myV
+ \sum_{j\in\Jset} \eta_j I_{(\srfactor_j  \bullet \routmx^{(\avvect(\cdot))}(j,0))}(\envmx_j - I)\right]}\,.
\end{equation}
Here we denote for $j\in \Jset$ the real valued functions
$\srfactor_j$ and
$\routmx^{(\avvect(\cdot))}(j,0)$ on $K$,
and by\\
$\srfactor_j \bullet \routmx^{(\avvect(\cdot))}(j,0)$ their point wise multiplication, the result of which we
interpret as vector to obtain the diagonal matrix $I_{(\srfactor_j \bullet \routmx^{(\avvect(\cdot))}(j,0))}$.

 \end{theorem}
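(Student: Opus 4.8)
The plan is to guess the product form $\pi(\nvect,k)=\xi(\nvect)\theta(k)$, where $\theta$ is a stochastic solution of $\theta\,Q_{red}=0$, and to verify the global balance equation $\pi\,Q^\Z=0$ directly, with the generator from \prettyref{prop:RS-processZ}. The organizing device is to split $Q^\Z$ at each environment state. For fixed $k\in K$, re\-merge every environment\-changing departure transition $(\nvect,k)\to(\nvect-\evect_j,m)$ back into the single transition $(\nvect,k)\to(\nvect-\evect_j,k)$: this is legitimate since $\sum_{m\in K}\envmx_j(k,m)=1$, so the merged rate is $1_{[n_j>0]}\srfactor_j(k)\mu_j(n_j)\routmx^{(\avvect(k))}(j,0)$, and what results is exactly the generator $Q^{(\srfactorvect(k))}$ of \eqref{eq:RS-GB-JN-general} for the modified Jackson network with service factors $\srfactorvect(k)$, acceptance vector $\avvect(k)$ and input factor $\tnfactor(k)$. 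After removing this ``frozen\-environment'' block, what is left of $Q^\Z$ consists only of transitions that move the environment: the pure jumps at rate $\nu(k,m)$ and the departure\-triggered jumps at rate $1_{[n_j>0]}\srfactor_j(k)\mu_j(n_j)\routmx^{(\avvect(k))}(j,0)\envmx_j(k,m)$.

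\textbf{Execution.} First I would invoke that the full Jackson product form $\xi$ of \eqref{eq:RS-jackson-steadystate} is invariant for $Q^{(\srfactorvect(k))}$ for every $k$: by \prettyref{thm:RS-JN-GB-skip-general} (equivalently \prettyref{cor:RS-JN-GB-alpha-3}) it is one of the admissible stationary distributions $\xi^{(\srfactorvect(k))}_\varphi$, namely the one whose factor on the nodes in $\JsetB{\srfactorvect(k)}$ is again the Jackson marginal, so $\xi\,Q^{(\srfactorvect(k))}=0$. Substituting $\pi=\xi\otimes\theta$ into the balance equation at a fixed state $(\nvect,k)$, the whole $Q^{(\srfactorvect(k))}$\-block therefore cancels after extracting the common factor $\theta(k)$, and only the environment\-changing terms survive. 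In each surviving inflow term the queue length is $\nvect+\evect_j$, so I would substitute the product\-form ratio $\xi(\nvect+\evect_j)=\xi(\nvect)\,\eta_j/\mu_j(n_j+1)$ (valid for all $j\in\Jset$ since $\xi$ has the form \eqref{eq:RS-jackson-steadystate}); this removes every factor $\mu_j(n_j+1)$ and makes all surviving terms proportional to $\xi(\nvect)$. Dividing by $\xi(\nvect)>0$ the $\nvect$\-dependence disappears and the residual identity becomes
\[
(\theta\,\myV)(k)+\sum_{j\in\Jset}\eta_j\sum_{m\in K}\theta(m)\,\srfactor_j(m)\,\routmx^{(\avvect(m))}(j,0)\bigl(\envmx_j(m,k)-\delta_{mk}\bigr)=0 ,
\]
which is precisely the $k$\-th coordinate of $\theta\,Q_{red}=0$ with $Q_{red}$ as in \eqref{eq:RS-Q-red2}. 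Hence $\pi\,Q^\Z=0$, and since $\pi$ is a probability on $E$ (as $\xi$ and $\theta$ are probabilities), it is a stationary distribution of $\Z$. Uniqueness of the stochastic $\theta$ is immediate from ergodicity of $\Z$ (two solutions would give two stationary distributions with different $K$\-marginals); existence of $\theta$ follows because $Q_{red}$ is an irreducible conservative $Q$\-matrix on $K$ — its off\-diagonal entries are non\-negative, $Q_{red}\evect=0$ since $\myV$ and each $I_{(\srfactor_j\bullet\routmx^{(\avvect(\cdot))}(j,0))}(\envmx_j-I)$ have vanishing row sums, and irreducibility transfers from that of $Q^\Z$ on $E$ (a departure step $k\to m$ of $Q^\Z$ forces $\srfactor_j(k)>0$, $\routmx^{(\avvect(k))}(j,0)>0$, $\envmx_j(k,m)>0$, and $\eta_j>0$ by irreducibility of $\routmx$) — so ergodicity of $\Z$ makes it positive recurrent.

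\textbf{Main obstacle.} I expect the difficulty to be the bookkeeping of the execution step rather than any deep idea: one must check that summing the departure\-triggered rates over the target environment reproduces \emph{exactly} the node\-$0$ departure rate of \eqref{eq:RS-GB-JN-general}, so that \prettyref{thm:RS-JN-GB-skip-general} applies verbatim to the frozen\-environment block, and then track the surviving inflow terms — each carrying one extra factor $\xi(\nvect+\evect_j)$ — through the substitution $\xi(\nvect+\evect_j)=\xi(\nvect)\eta_j/\mu_j(n_j+1)$ carefully enough that the $\nvect$\-dependence cancels exactly. This last cancellation is the whole point of the utilisation\-preserving construction of \prettyref{sect:RS-modified-jackson-networks}: if the modified routing did not keep the ratios $\eta_j/\mu_j(n_j)$ fixed — equivalently, if $(\alpha_j\eta_j:j\in\Jset_0)$ were not invariant for $\routmx^{(\avvect)}$ — the residual would retain a genuine $\nvect$\-dependence and could never collapse to an environment\-only equation such as $\theta\,Q_{red}=0$.
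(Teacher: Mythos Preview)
Your proposal is correct and follows essentially the same approach as the paper: insert the product form $\xi(\nvect)\theta(k)$ into the global balance equation, recognize that the ``frozen-environment'' part is killed by \prettyref{thm:RS-JN-GB-skip-general} (with the specific choice of $\varphi$ that reproduces the full Jackson marginal on $\JsetB{\srfactorvect(k)}$), use $\xi(\nvect+\evect_j)=\xi(\nvect)\eta_j/\mu_j(n_j+1)$ to strip the $\nvect$-dependence from the surviving terms, and identify the residual as $\theta\,Q_{red}=0$. Your explicit argument that irreducibility of $Q_{red}$ is inherited from that of $Q^\Z$ is a small addition the paper does not spell out; the paper instead argues existence and uniqueness of the stochastic $\theta$ directly from ergodicity of $\Z$.
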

\begin{proof}
The global balance equation of the process $\Z$ is
\begin{eqnarray*}
 &  & \pi(\nvect,k)\Bigg(\sum_{i\in\Jset}\tnfactor(k)\lambda \routmx^{(\avvect(k))}(0,i)
 +\underbrace{\sum_{m\in K\backslash\{k\}}\nu(k,m)}_{-\nu(k,k)}
 +\sum_{j\in\Jset}1_{[n_{j}>0]}\srfactor_j(k)\mu_{j}(n_{j})(1-\routmx^{(\avvect(k))}(j,j))\Bigg)
 \\
 & = & \sum_{i\in\Jset}\pi(\nvect-\evect_{i},k)1_{[n_{i}>0]}\tnfactor(k)\lambda \routmx^{(\avvect(k))}(0,i))\\
 &  & +\sum_{i\in\Jset}\sum_{j\in\Jset\backslash\{i\}} \pi(\nvect-\evect_{i}+\evect_{j},k)1_{[n_{i}>0]}\srfactor_j(k)\mu_{j}(n_{j}+1)\routmx^{(\avvect(k))}(j,i)\\
 &  & +\sum_{j\in\Jset}\sum_{m\in K}\pi(\nvect+\evect_{j},m)\srfactor_j(m)\mu_{j}(n_{j}+1)\routmx^{(\avvect(m))}(j,0)\envmx_j(m,k)\\
 &  & +\sum_{m\in K\backslash\{k\}}\pi(\nvect,m)\nu(m,k)
\end{eqnarray*}
Inserting $\pi(\nvect,k)=\xi(\nvect)\theta(k)$ and adding
$\xi(\nvect)\theta(k)\left((\nu(k,k) +
\sum_{j\in\Jset} 1_{n_j>0)} \gamma_j(k) \mu_j(n_j)\routmx^{(\avvect(k))}(j,j)\right)$
on both sides we obtain
\begin{eqnarray*}
 &  & \xi(\nvect)\theta(k)\left(\sum_{i\in\Jset}\tnfactor(k)\lambda \routmx^{(\avvect(k))}(0,i)
+\sum_{j\in\Jset}1_{[n_{j}>0]}\srfactor_j(k)\mu_{j}(n_{j})\right)
 \\
 & = & \sum_{i\in\Jset}\xi(\mathbf{n}-\evect_{i})\theta(k)1_{[n_{i}>0]}\tnfactor(k)\lambda r^{(\avvect(k))}(0,i))\\
 &  & +\sum_{i\in\Jset}\sum_{j\in\Jset} \xi(\nvect-\evect_{i}+\evect_{j})\theta(k)1_{[n_{i}>0]}
 \srfactor_j(k)\mu_{j}(n_{j}+1)\routmx^{(\avvect(k))}(j,i)\\
 &  & +\sum_{j\in\Jset}\sum_{m\in K}
\xi(\nvect+\evect_{j})\theta(m)\srfactor_j(m)\mu_{j}(n_{j}+1)\routmx^{(\avvect(m))}(j,0)\envmx_j(m,k)\\
 &  & +\sum_{m\in K}\xi(\nvect)\theta(m)\nu(m,k) \,.
\end{eqnarray*}
Rearranging terms and blowing up this is
\begin{eqnarray}
 &  & \theta(k)\left[\xi(\nvect)\left(\sum_{i\in\Jset}
 \tnfactor(k)\lambda \routmx^{(\avvect(k))}(0,i)
 +\sum_{j\in\Jset}1_{[n_{j}>0]}\srfactor_j(k)\mu_{j}(n_{j})\right)\right]\label{eq:GS-BOUNDARY-1}
 \\
 & = &\theta(k)
 \left[\sum_{i\in\Jset}\xi(\nvect-\evect_{i})
 1_{[n_{i}>0]}\tnfactor(k) \lambda \routmx^{(\avvect(k))}(0,i))\right.
 \nonumber\\
 &  &\quad\quad   + \sum_{i\in\Jset}\sum_{j\in\Jset} \xi(\nvect-\evect_{i}+\evect_{j})
 1_{[n_{i}>0]}\srfactor_j(k)\mu_{j}(n_{j}+1)\routmx^{(\avvect(k))}(j,i)\nonumber\\
 &  &\quad\quad  \left. +\sum_{j\in\Jset}\xi(\nvect+\evect_{j})
 \srfactor_j(k)\mu_{j}(n_{j}+1)
 \routmx^{(\avvect(k))}(j,0)\right]\nonumber\\
 &  & -\theta(k)\sum_{j\in\Jset}\xi(\nvect+\evect_{j})
 \srfactor_j(k)\mu_{j}(n_{j}+1)
 \routmx^{(\avvect(k))}(j,0)\nonumber\\
 &  & +\sum_{j\in\Jset}\sum_{m\in K}
 \xi(\nvect+\evect_{j})\theta(m)
 \srfactor_j(m)\mu_{j}(n_{j}+1)
 \routmx^{(\avvect(m))}(j,0)\envmx_j(m,k)
 \nonumber\\
 &  & +\sum_{m\in K}\xi(\mathbf{n})\theta(m)\nu(m,k)\nonumber
 \,.
\end{eqnarray}
For each fixed environment state $k$ the terms in squared brackets equate from \prettyref{thm:RS-JN-GB-skip-general}, where for $\JsetB{\srfactorvect(k)}$ we set in modified
notation $(\varphi\to\varphi(k))$ from that theorem the specific probabilities
\begin{equation}\label{eq:RS-varphi-k}
\varphi(k)(n_j:j\in\JsetB{\srfactorvect(k)}) :=
\prod_{j\in \JsetB{\srfactorvect(k)}} \prod_{k=1}^{n_j} \frac{\eta_j}{\mu_j(k)} C(j)^{-1},\quad
(n_j:j\in\JsetB{\srfactorvect(k)})\in\N_0^{\JsetB{\srfactorvect(k)}}\,.
\end{equation}
Dividing by $\xi(\nvect)$ and canceling $\mu_{j}(n_{j}+1)$ we arrive at
\begin{eqnarray*}
0 & = & -\theta(k)\sum_{j\in\Jset}\eta_j\srfactor_j(k)\routmx^{(\avvect(k))}(j,0)\\
 &  & +\sum_{j\in\Jset}\sum_{m\in K}\theta(m)\eta_j\srfactor_j(m)\routmx^{(\avvect(m))}(j,0)\envmx_j(m,k)\\
 &  & +\sum_{m\in K}\theta(m)\nu(m,k)\,.
\end{eqnarray*}
Rearranging terms we see
\begin{eqnarray*}
 &  & \theta(k)\sum_{j\in\Jset}\eta_j \srfactor_j(k) \routmx^{(\avvect(k))}(j,0)\\
 & = &\sum_{m\in K}\theta(m)\left(\nu(m,k) + \sum_{j\in\Jset} \eta_j \srfactor_j(m) \routmx^{(\avvect(m))}(j,0)\envmx_j(m,k) \right)\,,
\end{eqnarray*}
and
\begin{eqnarray*}
 &  & \theta(k)\left(-\nu(k,k) + \sum_{j\in\Jset}\eta_j\srfactor_j(k)
 \routmx^{(\avvect(k))}(j,0)(1 -
\envmx_j(k,k))
\right)\\
 & = &\sum_{m\in K\setminus\{k\}}\theta(m)\left(\nu(m,k) +
  \sum_{j\in\Jset} \eta_j\srfactor_j(m)\routmx^{(\avvect(m))}(j,0)\envmx_j(m,k) \right),
\end{eqnarray*}
which finally leads for any prescribed $k\in K$ to
\begin{equation}\label{eq:RS-Q-red1}
0  = \sum_{m\in K}\theta(m)\left(\nu(m,k) +
  \sum_{j\in\Jset} \eta_j\srfactor_j(m)\routmx^{(\avvect(m))}(j,0)(\envmx_j(m,k)-\delta_{mk}) \right)
  \,.
\end{equation}

Denote for $j\in \Jset$ the real valued functions $\srfactor_j$ and $\routmx^{(\avvect(\cdot))}(j,0)$ on $K$,
and by
$\srfactor_j \bullet \routmx^{(\avvect(\cdot))}(j,0)$ the point wise multiplication the result of which we
interpret as vector to obtain the diagonal matrix $I_{(\srfactor_j
\bullet \routmx^{(\avvect(\cdot))}(j,0))}$.
Then \eqref{eq:RS-Q-red1} can be written in matrix form as

\begin{equation}\label{eq:RS-Q-red3}
0  = \theta \underbrace{\left[\myV
+ \sum_{j\in\Jset} \eta_j I_{(\srfactor_j \bullet \routmx^{(\avvect(\cdot))}(j,0))}(\envmx_j - I)\right]}_{=: Q_{red}}\,.
\end{equation}

So we have identified \eqref{eq:RS-Q-red2}
and because the non diagonal elements of the matrix $Q_{red}$ on the right are non-negative whereas the
row sum is zero, $Q_{red}$ is a generator matrix of some Markov process.

If the equation \eqref{eq:RS-Q-red3} has no stochastic solution the global balance equation of $\Z$
would have a non-trivial non-negative solution  which  cannot be normalized. This would
contradict  ergodicity. The same argument
shows that the solution of \eqref{eq:RS-Q-red3} must be unique.
\end{proof}

Remark: Although in the original Jackson network the service (rate $\mu_j(n_j)$) and 
routing (probabilities $\routmx(i,j)$) are locally determined with respect to the transition graph of
$\routmx$, the network control may in general be by global algorithms due to the applied 
randomized skipping by $\routmx^{(\avvect)}$.

\subsubsection{Rerouting according to randomized reflection}\label{sect:RS-JN-random-env-refl}
In this section we assume  that the modification of routing in reaction to  the  servers'
change of capacities is by randomized reflection according to  \prettyref{sect:RS-randomreflec}, which yields a routing
regime $r^{(\avvect(k))}$ according to  \prettyref{prop:RS-reflection-eta-alpha}.
We use
$\avvect(k)$ and $\beta(k)$ as defined in
\eqref{eq:RS-general-availability-from-k}
and
\eqref{eq:RS-total-network-input-factor-from-k},
and take $\JsetB{\srfactorvect(k)}$ and $\JsetW{\srfactorvect(k)}$ as in \prettyref{def:RS-JBA}.

Recall, that the dynamics of the environment process $\Y$  is driven by 
a generator matrix $\myV=(\nu(k,m): k,m\in K)$ and  stochastic matrices
$\envmx_j=(\envmx_j(k,m): k,m\in K), j\in \Jset$  as described on p. \pageref{page:RS-JN-random-env}. Note, that the original extended routing matrix $r=(r(i,j):i,j\in\Jset_0)$ is irreducible
but under randomized reflection $r^{(\avvect(k))}$ may be reducible even on $\JsetW{\srfactorvect(k)}\cup \{0\}$,  which does not destroy the ergodicity of the system process $\Z=(\X,\Y)$. We then have a formally similar statement as in \prettyref{prop:RS-processZ}.

\begin{prop}\label{prop:RS-processZ-refl}
The queue lengths-environment process $\Z=(\X,\Y) = (Z(t):t\geq 0) = ((X(t),Y(t)):t\geq 0)$
is a homogeneous Markov process  on state space
$E:= \N_0^{\Jset}\times K$  with generator
$Q^\Z = (q^\Z((\mathbf{n},k),(\mathbf{n}^{'},k')):
(\mathbf{n},k),(\mathbf{n}^{'},k')\in E)$.
The strict positive transition rates of $Q^\Z$, are for $(\mathbf{n},k)=((n_1,\dots,n_J),k)\in \mathbb{N}_{0}^{\Jset}\times K$
\begin{align*}
q((\nvect,k),(\nvect+\evect_{i},k)) & = \tnfactor(k)\lambda \routmx^{(\avvect(k))}(0,i), & i\in \Jset\,,
\nonumber \\
q((\nvect,k),(\nvect-\evect_{j}+\evect_{i},k)) &
= 1_{[n_{j}>0]}
\srfactor_j(k)\mu_{j}(n_{j})\routmx^{(\avvect(k))}(j,i),& j,i\in \Jset,~ i\neq j\,, \nonumber \\
q((\nvect,k),(\nvect-\evect_{j},m)) &
 = 1_{[n_{j}>0]}\srfactor_j(k)\mu_{j}(n_{j})
\routmx^{(\avvect(k))}(j,0)R_j(k,m),& j\in\Jset,\nonumber \\
q((\nvect,k),(\nvect,m)) & = \nu(k,m),& m\in K\,.
\end{align*}
\end{prop}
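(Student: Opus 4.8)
The plan is to argue exactly as for \prettyref{prop:RS-processZ}: once the modified routing matrices $\routmx^{(\avvect(k))}$ have been fixed for every $k\in K$, the construction of $\Z$ and the verification of its Markov property do not depend on whether these matrices were produced by randomized skipping or by randomized reflection. Concretely, I would first record that every stochastic ingredient of the dynamics is memoryless given the present state $(\nvect,k)$: the external Poisson streams, the exponential service requests driven at node $j$ by the environment\nobreak-dependent rate $\srfactor_j(k)\mu_j(n_j)$, the reflection-modified routing decisions governed by $\routmx^{(\avvect(k))}$, the autonomous environment transitions at rates $\nu(k,m)$, and the environment jumps triggered with probability $R_j(k,m)$ by a departure at node $j$. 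Since the reflection rerouting is resolved instantaneously at the level of the embedded routing chain --- a rejected candidate returns the customer to its current node within the same infinitesimal step --- no supplementary variable is needed for Markovisation; in particular the self-transitions that reflection creates (a service completion at $j$ that leaves the customer at $j$) do not change the state $(\nvect,k)$ and therefore are absent from the list of strict positive rates. Hence $\Z=(\X,\Y)$ is a homogeneous Markov jump process on $E=\N_0^{\Jset}\times K$.

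Next I would simply enumerate the elementary state changes and read off the corresponding rates, using the independence and conditional-independence assumptions to guarantee that the competing exponential clocks combine additively with no cross terms. An external arrival occurs at total rate $\tnfactor(k)\lambda$ and is routed to node $i\in\Jset$ with probability $\routmx^{(\avvect(k))}(0,i)$; a service completion at node $j$ (rate $1_{[n_j>0]}\srfactor_j(k)\mu_j(n_j)$) sends the customer to node $i\neq j$ with probability $\routmx^{(\avvect(k))}(j,i)$, or out of the network with probability $\routmx^{(\avvect(k))}(j,0)$, in which case the environment simultaneously jumps from $k$ to $m$ with probability $R_j(k,m)$; and the environment moves autonomously from $k$ to $m$ at rate $\nu(k,m)$. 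Collecting these contributions reproduces precisely the rate matrix $Q^\Z$ displayed in the proposition.

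I do not anticipate a genuine obstacle, since this is the reflection counterpart of the statement called ``obvious'' in \prettyref{prop:RS-processZ}; the only substantive change is that $\routmx^{(\avvect(k))}$ is now furnished by \prettyref{prop:RS-reflection-eta-alpha} rather than by \prettyref{thm:RS-r-alpha}. The one point worth flagging explicitly --- and it is the closest thing to a subtlety --- is that under randomized reflection $\routmx^{(\avvect(k))}$ need not be irreducible on $\JsetW{\srfactorvect(k)}\cup\{0\}$; this is immaterial here, because irreducibility of the routing chain plays no role in showing that $\Z$ is Markov with the stated generator, and its consequences for the long-run behaviour of $\Z$ are handled separately in the ergodicity and product-form results that follow.
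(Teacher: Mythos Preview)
Your proposal is correct and matches the paper's treatment exactly: the paper gives no separate proof of this proposition, merely remarking that it is ``a formally similar statement as in \prettyref{prop:RS-processZ}'', which in turn was declared obvious from the standard independence and conditional-independence assumptions. Your write-up is in fact more detailed than the paper's, and your explicit observation that the possible reducibility of $\routmx^{(\avvect(k))}$ under reflection is irrelevant for the Markov property (only for later ergodicity arguments) is precisely the point the paper flags in the surrounding text.
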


As was pointed out in \prettyref{sect:RS-randomreflec} a necessary ingredient for successfully
applying randomized reflection as rerouting regime is reversibility of $r=(r(i,j):i,j\in\Jset_0)$,
which we will now set  in force.

\begin{theorem}
\label{thm:GS-BOUNDARY-refl} Consider the queue lengths-environment process $\Z=(\X,\Y)$ from
 \prettyref{prop:RS-processZ-refl} and assume that the extended routing matrix
$r=(r(i,j):i,j\in\Jset_0)$ is reversible for $\eta=(\eta_j:j\in\Jset_0)$.

Assume $\Z$ to be ergodic and assume further
that the pure Jackson network process $\X$ without environment is ergodic with stationary
and limiting distribution $\xi$ on $ \N_0^{\Jset}$  from \eqref{eq:RS-jackson-steadystate}
\begin{equation*}                        
\xi(\nvect)=\xi(n_1,\dots,n_J)
= \prod_{j=1}^{J} \prod_{k=1}^{n_j} \frac{\eta_j}{\mu_j(k)} C(j)^{-1},
\quad \nvect\in \N_0^{\Jset}\,.
\end{equation*}
Then the queue lengths-environment process $\Z$  has the unique
 steady state distribution $\pi=(\pi(\mathbf{n},k):\mathbf{n}\in\mathbb{N}_{0}^{\Jset},k\in K)$
of product form:
\[
\pi(\nvect,k)=\xi(\nvect)\theta(k),\quad \nvect\in\mathbb{N}_{0}^{\Jset},k\in K\,,
\]
where  $\theta$ is the unique stochastic solution of the following {\em reduced generator
equation} \\$\theta\cdot{Q}_{red}=0$ with

\begin{equation}\label{eq:RS-Q-red-rand-refl}
{Q}_{red} := {\left[\myV
+ \sum_{j\in\Jset} \eta_j I_{(\srfactor_j  \bullet \routmx^{(\avvect(\cdot))}(j,0))}(\envmx_j - I)\right]}\,.
\end{equation}
Here we denote for $j\in \Jset$ the real valued functions
$\srfactor_j$ and
$\routmx^{(\avvect(\cdot))}(j,0)$ on $K$,
and by\\
$\srfactor_j \bullet \routmx^{(\avvect(\cdot))}(j,0)$ their point wise multiplication, the result of which we
interprete as vector to obtain the diagonal matrix $I_{(\srfactor_j \bullet \routmx^{(\avvect(\cdot))}(j,0))}$.

\end{theorem}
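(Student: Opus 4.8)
The plan is to follow the proof of \prettyref{thm:GS-BOUNDARY} essentially line by line, replacing each appeal to the randomized-skipping machinery by its randomized-reflection counterpart: \prettyref{thm:RS-JN-GB-skip-general} is replaced by \prettyref{thm:RS-JN-GB-refl-general}, and the routing-level identity $(\alpha_j\eta_j:j\in\Jset_0)\cdot\routmx^{(\avvect)}=(\alpha_j\eta_j:j\in\Jset_0)$, which came from \prettyref{thm:RS-r-alpha} in the skipping case, now comes from \prettyref{prop:RS-reflection-eta-alpha} and hence relies on the reversibility of $\routmx$ for $\eta$ that is assumed here. Concretely, I would start from the global balance equation $\pi Q^\Z=0$ with the rates listed in \prettyref{prop:RS-processZ-refl}, insert the product ansatz $\pi(\nvect,k)=\xi(\nvect)\theta(k)$, and add on both sides the compensating term $\xi(\nvect)\theta(k)\bigl(\nu(k,k)+\sum_{j\in\Jset}1_{[n_{j}>0]}\srfactor_j(k)\mu_j(n_j)\routmx^{(\avvect(k))}(j,j)\bigr)$, exactly as in \eqref{eq:GS-BOUNDARY-1}; the only change is that $\routmx^{(\avvect(k))}(j,j)$ is now the (possibly large) self-loop probability created by reflection, which is immaterial to the algebra.

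The core step is again the observation that, for each fixed environment state $k\in K$, the subcollection of terms involving only the internal network dynamics at that $k$ is precisely the global balance equation of the modified Jackson network $\X^{(\srfactorvect(k))}$ routed by $\routmx^{(\avvect(k))}$. By \prettyref{thm:RS-JN-GB-refl-general} this equation is solved by $\xi$ (restricted to the working nodes) times the boundary distribution $\varphi(k)$ over $\JsetB{\srfactorvect(k)}$ chosen as the Jackson-type product \eqref{eq:RS-varphi-k}, so these terms cancel in pairs. What remains, after dividing by $\xi(\nvect)$ and canceling the common factor $\mu_j(n_j+1)$, is the reduced equation \eqref{eq:RS-Q-red1} in $\theta$ alone, which in matrix form is $\theta\,Q_{red}=0$ with $Q_{red}$ as in \eqref{eq:RS-Q-red-rand-refl}. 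Since the off-diagonal entries of $Q_{red}$ are non-negative and its rows sum to zero, $Q_{red}$ is a conservative generator; a non-negative non-normalizable solution of \eqref{eq:RS-Q-red3} would contradict the assumed ergodicity of $\Z$, and two distinct stochastic solutions $\theta_1\neq\theta_2$ would give two stationary laws $\xi\otimes\theta_1\neq\xi\otimes\theta_2$ for $\Z$, again contradicting ergodicity. This yields the asserted product form $\pi=\xi\otimes\theta$ with $\theta$ the unique stochastic solution of $\theta\,Q_{red}=0$.

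The only point that genuinely needs care is that under randomized reflection $\routmx^{(\avvect(k))}$ may be reducible even on $\JsetW{\srfactorvect(k)}\cup\{0\}$, so $\X^{(\srfactorvect(k))}$ need not be irreducible on $\N_0^{\JsetW{\srfactorvect(k)}}$ and hence need not possess a \emph{unique} stationary distribution. In the argument above, however, all that is used is that $\xi$ together with \eqref{eq:RS-varphi-k} \emph{is} a stationary distribution of $\X^{(\srfactorvect(k))}$, which \prettyref{thm:RS-JN-GB-refl-general} guarantees; global uniqueness is then imported from the assumed ergodicity of $\Z$ rather than from irreducibility of the individual chains. So the possible reducibility is harmless, no new estimate is required, and the only real work is the bookkeeping that turns $\pi Q^\Z=0$ into $\theta\,Q_{red}=0$, which is identical to the skipping case.
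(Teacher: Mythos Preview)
Your proposal is correct and follows essentially the same approach as the paper: the paper's own proof simply states that one repeats the argument of \prettyref{thm:GS-BOUNDARY} verbatim, replacing the single appeal to \prettyref{thm:RS-JN-GB-skip-general} at step \eqref{eq:GS-BOUNDARY-1} by \prettyref{thm:RS-JN-GB-refl-general}. Your additional remark that the possible reducibility of $\routmx^{(\avvect(k))}$ under reflection is harmless (since only stationarity of $\xi$, not its uniqueness for the frozen-$k$ chain, is needed, with global uniqueness coming from ergodicity of $\Z$) is a useful clarification that the paper leaves implicit.
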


The proof of the theorem is along the lines of the proof of \prettyref{thm:GS-BOUNDARY}, where we
used almost completely the general abstract notation  $r^{(\avvect(k))}$ for the rerouting regime.
Only when manipulating \eqref{eq:GS-BOUNDARY-1} we had to refer to the specific
\prettyref{thm:RS-JN-GB-skip-general},
which would be substituted now by referring to \prettyref{thm:RS-JN-GB-refl-general}. The residual derivations are similar.

Remark: The  control of the customers' routing under randomized reflection by $\routmx^{(\avvect)}$ 
is by local decisions with respect to the transition graph of $\routmx$. So the network process is locally determined as well.

\subsubsection{Rerouting by general randomization}\label{sect:RS-JN-random-env-general}

This last observation of the previous section clearly suggests to extract the general principle for randomized rerouting. We consider
modification of routing in reaction to
the  servers' change of capacities by environment dependent factors
 $\srfactorvect(k)\in [0,\infty)^{\Jset}$ to $\mu_j(n_j,k)=\gamma_j(k) \mu_j(n_j)$.
 We use the notation introduced in \prettyref{sect:RS-JN-random-env-skip} and
$\avvect(k)$ and $\beta(k)$ as defined in
\eqref{eq:RS-general-availability-from-k}
and
\eqref{eq:RS-total-network-input-factor-from-k},
and take $\JsetB{\srfactorvect(k)}$ and $\JsetW{\srfactorvect(k)}$ as in  \prettyref{def:RS-JBA}.

We have the usual dynamics of the environment process $\Y$ with $\myV=(\nu(k,m): k,m\in K)$ and
$\envmx_j=(\envmx_j(k,m): k,m\in K), j\in \Jset$.
For the general rerouting regimes $\routmx^{(\avvect(k))}, k\in K,$  with
$\avvect(k)$ with $\alpha_0(k)=1$ and $\avvect(k)\in [0,1]^{\Jset_0}$,
 we only require the properties described in 
\prettyref{cor:RS-JN-GB-alpha-3} and have the following generalization in the spirit of Zhu's main theorem
 \cite{zhu:94}[p. 12],
without specifying explicitly the control regimes for rerouting. Note, that our
environment process is not Markov because of the two-way interaction of environment and service
process, while   Zhu's  theorem requires the environment to be Markov for its own.
\begin{cor}
\label{cor:GS-BOUNDARY-general}
The queue lengths-environment process $\Z=(\X,\Y) = (Z(t):t\geq 0) = ((X(t),Y(t)):t\geq 0)$
is a homogeneous Markov process  on state space
$E:= \N_0^{\Jset}\times K$  with generator
$Q^\Z = (q^\Z((\mathbf{n},k),(\mathbf{n}^{'},k')):
(\mathbf{n},k),(\mathbf{n}^{'},k')\in E)$.
The strict positive transition rates of $Q^\Z$, are for $(\mathbf{n},k)=((n_1,\dots,n_J),k)\in \mathbb{N}_{0}^{\Jset}\times K$
\begin{align*}
q((\nvect,k),(\nvect+\evect_{i},k)) & = \tnfactor(k)\lambda \routmx^{(\avvect(k))}(0,i), & i\in \Jset\,,
\nonumber \\
q((\nvect,k),(\nvect-\evect_{j}+\evect_{i},k)) &
= 1_{[n_{j}>0]}
\srfactor_j(k)\mu_{j}(n_{j})\routmx^{(\avvect(k))}(j,i),& j,i\in \Jset,~ i\neq j\,, \nonumber \\
q((\nvect,k),(\nvect-\evect_{j},m)) &
 = 1_{[n_{j}>0]}\srfactor_j(k)\mu_{j}(n_{j})
\routmx^{(\avvect(k))}(j,0)R_j(k,m),& j\in\Jset,\nonumber \\
q((\nvect,k),(\nvect,m)) & = \nu(k,m),& m\in K\,.
\end{align*}
Assume that the rerouting regimes $r^{(\avvect(k))}, ~k\in K,$ have invariant measures\\
$y(k)=(\alpha_j(k)\cdot  \eta_j:j\in \Jset_0)$.

Assume $\Z$ to be ergodic and assume further
that the pure Jackson network process $\X$ without environment is ergodic with stationary
and limiting distribution $\xi$ on $ \N_0^{\Jset}$  from \eqref{eq:RS-jackson-steadystate}
\begin{equation*}                        
\xi(\nvect)=\xi(n_1,\dots,n_J)
= \prod_{j=1}^{J} \prod_{k=1}^{n_j} \frac{\eta_j}{\mu_j(k)} C(j)^{-1},
\quad \nvect\in \N_0^{\Jset}\,.
\end{equation*}
Then the queue lengths-environment process $\Z$  has the unique
 steady state distribution $\pi=(\pi(\mathbf{n},k):\mathbf{n}\in\mathbb{N}_{0}^{\Jset},k\in K)$
of product form:
\[
\pi(\mathbf{n},k)=\xi(\mathbf{n})\theta(k),\quad \mathbf{n}\in\mathbb{N}_{0}^{\Jset},k\in K\,,
\]
where  $\theta$ is the unique stochastic solution of the following {\em reduced generator
equation} \\$\theta\cdot{Q}_{red}=0$ with

\begin{equation}\label{eq:RS-Q-red-general}
{Q}_{red} := {\left[\myV
+ \sum_{j\in\Jset} \eta_j I_{(\srfactor_j  \bullet \routmx^{(\avvect(\cdot))}(j,0))}(\envmx_j - I)\right]}\,.
\end{equation}

Here we denote for $j\in \Jset$ the real valued functions
$\srfactor_j$ and
$\routmx^{(\avvect(\cdot))}(j,0)$ on $K$,
and by\\
$\srfactor_j \bullet \routmx^{(\avvect(\cdot))}(j,0)$ their point wise multiplication, the result of which we
interprete as vector to obtain the diagonal matrix $I_{(\srfactor_j \bullet \routmx^{(\avvect(\cdot))}(j,0))}$.
\end{cor}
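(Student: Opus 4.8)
The plan is to mimic the proof of \prettyref{thm:GS-BOUNDARY} almost verbatim; the whole point of the corollary is that the only property of the randomized--skipping matrix used in that proof is the invariant--measure identity $(\alpha_j(k)\eta_j:j\in\Jset_0)\routmx^{(\avvect(k))}=(\alpha_j(k)\eta_j:j\in\Jset_0)$, which is now taken as a hypothesis. First, the Markov property of $\Z$ together with the explicit form of the generator $Q^\Z$ follows from the standard independence assumptions on inter--arrival times, service times, the routing mechanism and the environment jumps triggered by departing customers, exactly as in \prettyref{prop:RS-processZ} and \prettyref{prop:RS-processZ-refl}; nothing new is required here.

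For the steady state I would insert the product ansatz $\pi(\nvect,k)=\xi(\nvect)\theta(k)$ into the global balance equation $\pi Q^\Z=0$, add the diagonal compensation terms $\xi(\nvect)\theta(k)\bigl(\nu(k,k)+\sum_{j\in\Jset}1_{[n_j>0]}\srfactor_j(k)\mu_j(n_j)\routmx^{(\avvect(k))}(j,j)\bigr)$ on both sides, and then regroup so that all transitions of $\X$ with the environment held at $k$ are collected into one bracket, precisely as in the passage leading to \eqref{eq:GS-BOUNDARY-1}. The decisive observation is that, for each fixed $k$, that bracket coincides with the two sides of the global balance equation of the modified Jackson network with service factors $\srfactorvect(k)$ and rerouting matrix $\routmx^{(\avvect(k))}$; since $\routmx^{(\avvect(k))}$ has $(\alpha_j(k)\eta_j:j\in\Jset_0)$ as an invariant measure, \prettyref{cor:RS-JN-GB-alpha-3} guarantees that $\xi^{(\srfactorvect(k))}_{\varphi(k)}$ solves that equation, where $\varphi(k)$ is chosen as in \eqref{eq:RS-varphi-k} so that $\xi^{(\srfactorvect(k))}_{\varphi(k)}(\nvect)=\xi(\nvect)$ for every $\nvect$. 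Hence the bracket cancels; dividing the remaining terms by $\xi(\nvect)$ and cancelling the factor $\mu_j(n_j+1)$ leaves only the genuinely environment--driven terms, and reorganising them exactly as in the derivation of \eqref{eq:RS-Q-red1} yields $\theta Q_{red}=0$ with $Q_{red}$ as in \eqref{eq:RS-Q-red-general}.

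To finish, one notes that $Q_{red}=\myV+\sum_{j\in\Jset}\eta_j I_{(\srfactor_j\bullet\routmx^{(\avvect(\cdot))}(j,0))}(\envmx_j-I)$ has non--negative off--diagonal entries and vanishing row sums, because $\myV$ is a conservative generator and each $\envmx_j-I$ has non--negative off--diagonal entries and zero row sums; thus $Q_{red}$ is itself the generator of a Markov jump process on $K$. Since $\pi=\xi\otimes\theta$ is a bijection between normalisable solutions of $\pi Q^\Z=0$ and solutions of $\theta Q_{red}=0$, ergodicity of $\Z$ forces $\theta Q_{red}=0$ to possess a unique stochastic solution, which in turn produces the unique $\pi$ of the asserted product form. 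The only delicate bookkeeping is that the blocked set $\JsetB{\srfactorvect(k)}$ depends on $k$ and one must confirm that the choice \eqref{eq:RS-varphi-k} of $\varphi(k)$ really reconstitutes the full Jackson weight $\xi(\nvect)$ on the closed class carved out by the working nodes --- but this is identical to the verification already carried out for \prettyref{thm:GS-BOUNDARY}, so no genuinely new obstacle arises; the substantive content of the corollary is precisely the reduction of all three rerouting regimes to the single hypothesis $y(k)\routmx^{(\avvect(k))}=y(k)$.
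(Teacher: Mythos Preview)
Your proposal is correct and matches the paper's approach: the paper does not spell out a separate proof for this corollary but makes clear (in the text introducing it and in the remark after \prettyref{thm:GS-BOUNDARY-refl}) that the argument of \prettyref{thm:GS-BOUNDARY} goes through verbatim once the specific reference to \prettyref{thm:RS-JN-GB-skip-general} at equation \eqref{eq:GS-BOUNDARY-1} is replaced by the abstract \prettyref{cor:RS-JN-GB-alpha-3}, which is exactly the substitution you make. Your identification of the single structural hypothesis $y(k)\routmx^{(\avvect(k))}=y(k)$ as the only property of the rerouting matrix actually used is precisely the ``general principle'' the paper says it is extracting.
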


\subsection*{Note on figures}\label{sect:RS-public-domain-notice}
To the extent possible under law, the author(s) have dedicated all copyright and related and neighboring rights to \prettyref{fig:RS-routing-skipping}, \prettyref{fig:RS-jackson-skipping-degrading} and \prettyref{fig:RS-jackson-skipping-environement} to the public domain worldwide. 

You can find a copy of CC0 Public Domain Dedication on\\ \url{http://creativecommons.org/publicdomain/zero/1.0/}. 
    
\bibliographystyle{alpha}
\newcommand{\etalchar}[1]{$^{#1}$}


\end{document}